\documentclass[11pt,a4paper]{amsart}
\pdfoutput=1
\usepackage{amssymb}
\usepackage{amsmath}
\usepackage{mathtools}
\usepackage{amsthm}
\usepackage{amsfonts}
\usepackage{bbm} 
\usepackage{tikz-cd} 
\usepackage{todonotes} 
\usepackage{graphicx} 
\usepackage[toc,page]{appendix} 
\usepackage{kpfonts} 
\usepackage[mathscr]{euscript} 
\definecolor{winered}{rgb}{0.5,0,0}

\usepackage[margin=2.5cm]{geometry} 
\usepackage{subfiles} 
 
\usepackage[style = alphabetic]{biblatex}
\renewbibmacro{in:}{}
\addbibresource{references.bib}

\usepackage[citecolor = winered, linkcolor = winered, menucolor = winered, colorlinks = true]{hyperref}

\usepackage[foot]{amsaddr}

\newtheoremstyle{theoremdd}
{\topsep}{\topsep}{\upshape}{0pt}{\bfseries}{.}{ }{\thmname{#1}\thmnumber{ #2}\thmnote{ (#3)}}

\theoremstyle{definition}
\newtheorem{Th}{Theorem}[section]
\newtheorem{Lemma}[Th]{Lemma}
\newtheorem{Cor}[Th]{Corollary}
\newtheorem{Prop}[Th]{Proposition}
\newtheorem{Def}[Th]{Definition}

\newtheorem{Rem}[Th]{Remark}

\newtheorem{Ex}[Th]{Example}


\newcommand{\cat}{\mathsf}

\newcommand{\co}{\text{co}}

\newcommand{\dep}{\text{dep}}
\newcommand{\bhd}{\blacktriangleleft}
\newcommand{\rch}{\, \cat{rch} \,}
\newcommand{\join}{\vee}
\newcommand{\meet}{\wedge}
\newcommand{\Con}{\text{Con}}

\renewcommand{\and}{\wedge}
\newcommand{\orr}{\vee}
\newcommand{\rdp}{\cat{rdp}}

\newcommand{\BL}{\cat{BL}}
\newcommand{\thin}{\text{th}}
\newcommand{\en}{\text{en}}
\newcommand{\op}{\text{op}}

\makeatletter
\newtheorem*{rep@theorem}{\rep@title}
\newcommand{\newreptheorem}[2]{%
\newenvironment{rep#1}[1]{%
 \def\rep@title{#2 \ref{##1}}%
 \begin{rep@theorem}}%
 {\end{rep@theorem}}}
\makeatother
\newreptheorem{Prop}{Proposition}
\newreptheorem{Th}{Theorem}
\newreptheorem{Cor}{Corollary}

\usetikzlibrary{calc}
\usetikzlibrary{decorations.pathmorphing}

\tikzset{curve/.style={settings={#1},to path={(\tikztostart)
    .. controls ($(\tikztostart)!\pv{pos}!(\tikztotarget)!\pv{height}!270:(\tikztotarget)$)
    and ($(\tikztostart)!1-\pv{pos}!(\tikztotarget)!\pv{height}!270:(\tikztotarget)$)
    .. (\tikztotarget)\tikztonodes}},
    settings/.code={\tikzset{quiver/.cd,#1}
        \def\pv##1{\pgfkeysvalueof{/tikz/quiver/##1}}},
    quiver/.cd,pos/.initial=0.35,height/.initial=0}

\tikzset{tail reversed/.code={\pgfsetarrowsstart{tikzcd to}}}
\tikzset{2tail/.code={\pgfsetarrowsstart{Implies[reversed]}}}
\tikzset{2tail reversed/.code={\pgfsetarrowsstart{Implies}}}

\title{A Mathematical Model of Package Management Systems}

\author{Gershom Bazerman $^1$}
\address{$^1$Arista Networks}
\address{$^2$CUNY CityTech}
\address{$^3$Hyperreal Enterprises}
\author{Emilio Minichiello $^2$}
\author{Raymond Puzio $^3$}

\begin{document}

\maketitle

\setcounter{tocdepth}{1}
\begin{abstract}
This paper brings mathematical tools to bear on the study of package dependencies in software systems. We introduce structures known as Dependency Structures with Choice (DSC) that provide a mathematical account of such dependencies, inspired by the definition of general event structures in the study of concurrency. We equip DSCs with a particular notion of morphism and show that the category of DSCs is isomorphic to the category of antimatroids. We perform a similar analysis with Winskel's general event structures \cite{winskel1999event}, showing that that the category of DSCs is isomorphic to a certain subcategory of event structures without conflict. We study the exactness properties of these isomorphic categories and show that they are finitely complete with finite coproducts but do not have all coequalizers. Further, we construct a contravariant functor from a category of DSCs equipped with a certain subclass of morphisms to the category of finite distributive lattices, making use of a simple finite characterization of the Bruns-Lakser completion. Finally, we introduce a formal account of versions of packages and introduce a mathematical account of package version-bound policies.
\end{abstract}

\tableofcontents

\section{Introduction}

Package repositories and package management systems are pervasive in modern software. Such repositories can consist of binary packages in Linux distributions (such as Debian or Arch), or of source packages for use in the course of developing software and managing the libraries on which it depends (such as the npm package repository for the JavaScript language or the Hackage package repository for the Haskell language).

This paper works towards the development of a basic toolkit for considering package dependencies in software systems. Specifically we develop a mathematical model of package management systems which we call \textbf{Dependency Structures with Choice} or \textbf{DSC}s for short. A preDSC consists of a finite set $E$, whose elements we think of as packages, and a function $\dep: E \to P(P(E))$, where $P(E)$ denotes the power set of $E$. We think of this function as a dependency structure on $E$. To every package $e$ we use $\dep$ to associate a collection of ``possible dependency sets'' $D^e \in \dep(e)$. This models how packages can be run using various different sets of other packages. The inspiration for this mathematical model comes from general event structures, which were introduced for the purpose of studying concurrent computation \cite{Winskel1980EventsIC}. DSCs are then a subclass preDSCs satisfying four natural axioms that any package management system should satisfy (Definition \ref{def DSC}). To every preDSC $(E, \dep)$ we can associate a poset $\rdp(E)$, known as its \textbf{reachable dependency poset}, whose elements are those subsets of $E$ that are reachable (Definition \ref{def reachable}). One can think of a subset $A \subseteq E$ of packages as being reachable if the elements of $A$ can be ordered into a sequence of package installations such that each new package installed depends only on already installed packages. It turns out that reachable dependency posets of preDSCs inherit more structure, namely they are finite join-semilattices, which implies that they are finite lattices. This structure gives us an order-theoretic way of describing and analyzing the dependency structure inherent to a preDSC.

We define a notion of morphism for preDSCs in such a way that $\rdp$ becomes a functor from the category of preDSCs to the category of finite join-semilattices. If a preDSC $(E, \dep)$ is furthermore a DSC, then $\rdp(E)$ is in particular a finite diamond-free semimodular lattice (Definition \ref{def diamond free semimodular}). These diamond-free semimodular lattices are intimately related to antimatroids, and this led us to discover a correspondence between DSCs and antimatroids.

Antimatroids have been discovered and rediscovered for many purposes in lattice theory, graph theory, and combinatorics since the 1940s \cite{monjardet1985use}. An antimatroid consists of a set $E$ and a collection of subsets $\mathcal{F}$, whose elements are called feasible sets, satisfying the axioms of Definition \ref{def antimatroid}. Antimatroids have several applications in computer science, such as in \cite{glasserman1994monotone} and \cite{merchant2016ot}. We provide another application by showing that if $(E, \dep)$ is a DSC, then the pair $(E, \rdp(E))$ is an antimatroid. This is the content of Proposition \ref{prop rdp of a dsc is an antimatroid}. Since the literature on antimatroids does not provide a standard definition of what a morphism of antimatroids should be, we take morphisms of antimatroids to be those functions that pull feasible sets back to feasible sets. This natural choice defines a category of antimatroids. One of the main results of this paper is Theorem \ref{th equiv DSCs antimatroids}, which gives an isomorphism between the category of DSCs and the category of antimatroids. Combining this theorem with a result of Czedli \cite{Czedli_2014} then shows that any finite diamond-free semimodular lattice is isomorphic to the reachable dependency poset of a DSC.

While every DSC $(E, \dep)$ gives a finite diamond-free semimodular lattice $\rdp(E)$, it is not always the case that $\rdp(E)$ will be a distributive lattice. Distributive lattices are well studied objects of order theory, and we precisely characterize the subclass of DSCs whose reachable dependency poset is distributive. These turn out to be what we call Dependency Structures with No Choice, or those DSCs $(E, \dep)$ where $\dep(e)$ is a singleton for every $e \in E$.

We establish an isomorphism of categories between irredundant preDSCs (Definition \ref{def irredundant}) and those general event structures without conflict, which we call \textbf{conflict-free event structures} or \textbf{CFES}s. When we restrict this isomorphism to DSCs, we obtain an appropriate subcategory of CFESs in Theorem \ref{th equiv DSC and DES}, which we call \textbf{dependency event structures}. This leads also to Proposition \ref{prop family of configs}, which says that every family of configurations of a CFES can be produced by a dependency event structure. As a corollary it shows that given a preDSC $(E, \dep)$ there exists a DSC $(\widetilde{E}, \widetilde{\dep})$ and an isomorphism $\rdp(E) \cong \rdp(\widetilde{E})$.

We also prove exactness properties for the category of DSCs, equivalently the category of antimatroids and dependency event structures. In particular we prove that this category has all finite limits, but not all finite colimits. In particular, while the category of DSCs has finite coproducts, not all coequalizers exist. This can be seen in contrast to \cite{heunen2018category}, which characterizes exactness properties of the category of matroids.

Given a DSC $(E, \dep)$, its reachable dependency poset $\rdp(E)$ will in general not be a distributive lattice. One can complete a lattice using the Bruns-Lakser completion (or injective envelope) \cite{bruns1970injective}. When applied to the reachable dependency poset of a DSC, the Bruns-Lakser completion defines a Merkle tree, which gives a complete description of the execution traces of the DSC. We give an easily computed description of the Bruns-Lakser completion for finite lattices in Proposition \ref{prop bruns lakser as downsets of join irreducibles} (implicit in the work of \cite{gerhke2013}) which is of independent interest. 

Lastly, we axiomatize what it means for a package to be an updated version of another package in a DSC, characterizing it as an idempotent monad on $\rdp(E)$. We extend this to an idempotent monad on the Bruns-Lakser completion of $\rdp(E)$ as well. 

The paper is organized as follows. In Section \ref{section DSCs} we define DSCs, the reachable dependency poset construction, morphisms of DSCs, and show that with these definitions, $\rdp$ defines a contravariant functor from the category of DSCs to the category of finite join-semilattices. We also show that by restricting to a smaller class of morphisms, $\rdp$ extends to a functor to the category of finite lattices. In Section \ref{section antimatroids and dscs} we prove that the category of DSCs and the category of antimatroids are isomorphic. From this we prove that a lattice is diamond-free semimodular if and only if it can be written as the reachable dependency poset of a DSC. In Section \ref{section dsncs}, we study a subclass of DSCs where every element has a unique possible dependency set and show that it is isomorphic to the opposite of the category of finite posets. In section \ref{section event structures} we relate DSCs to event structures, which are a widely studied model of concurrent computation. While this section is independent of the rest of the paper, the connection was an initial starting point for this work, and suggests several avenues for future research. In Section \ref{section category of DSCs} we study the exactness properties of the category of DSCs. In Section \ref{section bruns-lakser}, we study the Bruns-Lakser completion of reachable dependency posets, showing that they produce Merkle trees. In Section \ref{section version parametrization} we define version parametrization for DSCs. Finally in Section \ref{section conclusion} we discuss future avenues of research.

\subsection{Relation to Other Work}
In some ways, this paper is an update to and expansion of \cite{bazerman2020topological}. The paper \cite{bazerman2021semantics} also studies DSCs, but with a different motivation and focus, it also defines morphisms of DSCs to be certain kinds of relations more general than functions. While useful, we chose to restrict the notion of morphism (Definition \ref{def morphism of DSCs}) in order to obtain the functor (\ref{eq rdp functor}). This paper can be read completely independently of \cite{bazerman2020topological} and \cite{bazerman2021semantics}.

Our definition of preDSC (Definition \ref{def preDSC}) is exactly the definition of repository in \cite[Definition 2]{dicosmo} without any notion of conflict, though our interpretation of package management systems using preDSCs is different than theirs. 

\subsection{Notation}
In what follows we use the following notation. 
\begin{enumerate}
    \item If $(E, \dep)$ is a preDSC, and $e \in E$, we let $D^e$ denote a possible dependency set of $e$,
    \item Given a set function $f: A \to B$, let $f_*: P(A) \to P(B)$ denote the image morphism, namely $f_*(S) = \{ f(s) \in B \, : \, s \in S \}$, and let $f^*: P(B) \to P(A)$ denote the preimage morphism, namely $f^*(T) = \{ a \in A \, : \, f(a) \in T \}$, for sets $S$ and $T$.
    \item We will often denote singleton sets $\{a \}$ without their braces, especially in expressions like $A \setminus \{a \}$ or $A \cup \{a \}$, we will write this as $A \setminus a$ or $A \cup a$,
    \item We often denote categories with the type face $\cat{C}$, and the set of its objects by $\cat{C}_0$. 
\end{enumerate}

\section*{Acknowledgements}
The authors would like to thank the anonoymous reviewers and the participants of the NYC Category Theory seminar for their helpful comments and suggestions.

\section{Dependency Structures with Choice} \label{section DSCs}

In this section we motivate and define the main mathematical objects of this paper, Dependency Structures with Choice. To motivate our definition, let us consider the following \texttt{package.json} file used to describe JavaScript packages in the npm package repository.

\begin{Ex} \label{ex json package}
The name and version properties of the below \texttt{package.json} file serve to uniquely identify the package within a given repository. The description and license are intended, for the most part, for human consumption, and we need not consider them here. The ``main'' property is used to indicate the entry point of the package to the build system, but we are not concerned with the actual build process at this moment, so it need not be considered either. The central thing to understand is then the ``dependencies'' property. Here, it is given as a morphismping of package names to package version specifications. So the \texttt{react} package is required to already be available at precisely version 2.4.0, and similarly for \texttt{webpack}. On the other hand, a caret precedes the version specification for \texttt{redis}. In the syntax of these files, this indicates that any version of the package with major version 4 and minor version $\geq 3$ is acceptable, i.e. it specifies not just a single version, but a range. 

 \begin{verbatim} 
{
  "name": "leftpad", 
  "version": "5.9.2", 
  "description": "Provides left padding",
  "main": "index.js",
  "license": "MIT",
  "dependencies": {
      "react": "2.4.0",
      "webpack": "0.1.3",
      "redis": "^4.3.0"
   }
}
\end{verbatim} 

Supposing we knew for a fact that the matching versions were 4.3.0, 4.4.0 and 4.5.0, then the meaning of this dependency would be the disjunctive clause: \texttt{redis-4.3.0 $\vee$ redis-4.4.0 $\vee$ redis-4.5.0}. Further, since each package gives a disjunction of versions, but the dependency field as a whole specifies a conjunction of packages, then the entirety of the key metadata for a package may be regarded as a single formula:
\begin{equation} \label{eq json formula}
    \tt{react-2.4.0} \wedge \tt{webpack-0.1.3} \wedge \tt{(redis-4.3.0 \vee redis-4.4.0 \vee redis-4.5.0)}.
\end{equation}

Metadata of package repositories other than npm may be treated similarly.
\end{Ex}

If $A$ is a set, let $P(A)$ denote its power set. There is an interesting structure on the double power set $P(P(A))$ of $A$. Suppose that $A$ is a finite set, then an element $D$ of $P(P(A))$ is of the form $D = \{ D_1, \dots, D_n \}$ where each $D_i = \{ d_1^i, d_2^i \dots, d_{m_i}^i \}$ is itself a finite subset of $A$. We can then interpret $D$ as a logical statement of the form
\begin{equation*}
    (d_1^1 \wedge d_2^1 \wedge \dots d_{m_1}^1) \vee (d_1^2 \wedge d_2^2 \wedge \dots d_{m_2}^2) \vee \dots \vee (d_1^n \wedge d_2^n \wedge \dots \wedge d_{m_n}^n).
\end{equation*}

So we can represent the formula (\ref{eq json formula}) from Example \ref{ex json package} as an element of the double power set of the packages mentioned above
\begin{equation*}
\begin{aligned}
D & = \{\{  \texttt{react-2.4.0, webpack-0.1.3, redis-4.3.0} \}, \\
& \{ \texttt{react-2.4.0, webpack-0.1.3, redis-4.4.0} \}, \\
& \{ \texttt{react-2.4.0, webpack-0.1.3, redis-4.5.0} \} \}.
\end{aligned}
\end{equation*}
The above example and observation motivate the following definition.

\begin{Def} \label{def preDSC}
A \textbf{pre-dependency structure with choice} or \textbf{preDSC} consists of a finite set $E$ and a function $\dep: E \to P(P(E))$, which we call its \textbf{dependency function}. If $e \in E$, then we call an element $D^e \in \dep(e)$, a \textbf{possible dependency set} or \textbf{depset} for short.
\end{Def}

Since we will use preDSCs to model package management systems, we wish to understand what possible states our systems can reach.

\begin{Def}
Given a preDSC $(E, \dep)$, let $X,Y \subseteq E$. We define the \textbf{reach} relation $\cat{ rch } \subseteq P(E) \times P(E)$, as follows. We write $X \, \cat{ rch } \, Y$ if
\begin{enumerate}
    \item $X \subseteq Y$, and 
    \item for every $y \in Y$, there exists a depset $D^y$ such that $D^y \subseteq X$.
\end{enumerate}
\end{Def}

Note that $\rch$ is an antisymmetric relation, i.e. if $X \rch Y$ and $Y \rch X$, then $X = Y$.

\begin{Def} \label{def reachable}
Given a preDSC $(E, \dep)$, we say that a subset $X \subseteq E$ is \textbf{reachable} if there exists a finite sequence $X_0, X_1, \dots, X_n$ of subsets of $E$ such that
$$\varnothing \rch X_0 \rch X_1 \rch \dots \rch X_n \rch X.$$
In other words, if we let $\rch^*$ denote the transitive closure of the $\rch$ relation, then $X \subseteq E$ is reachable if $\varnothing \rch^* X$.
\end{Def}

\begin{Rem}
The intuition one might have for $\cat{rch}$ is that $X \rch Y$ if everything in $Y$ can depend on things in $X$. With regards to package management, one can imagine that a set of packages is reachable if, starting from the empty state, a sequence of package installs can eventually produce that precise set of installed packages.
\end{Rem}

\begin{Lemma} \label{lem rch closed under additivity}
If $(E, \dep)$ is a preDSC, $A \rch A'$, and $B \rch B'$, then $(A \cup B) \rch (A' \cup B')$.
\end{Lemma}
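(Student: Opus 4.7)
The plan is to verify the two defining conditions of $\rch$ directly for the pair $(A \cup B, A' \cup B')$, using the corresponding conditions that hold for $(A, A')$ and for $(B, B')$.

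First I would check the inclusion condition. Since $A \rch A'$ we have $A \subseteq A'$, and since $B \rch B'$ we have $B \subseteq B'$, so taking unions gives $A \cup B \subseteq A' \cup B'$ immediately.

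Next I would verify the enabling condition: that every $y \in A' \cup B'$ admits some $D^y \in \dep(y)$ with $D^y \subseteq A \cup B$. Given such a $y$, I would split into cases depending on whether $y \in A'$ or $y \in B'$ (if $y$ lies in both, either case suffices). In the first case, $A \rch A'$ supplies a $D^y \in \dep(y)$ with $D^y \subseteq A \subseteq A \cup B$; in the second, $B \rch B'$ supplies a $D^y \in \dep(y)$ with $D^y \subseteq B \subseteq A \cup B$. In either case we produce the required depset, completing the verification.

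There is no real obstacle here: the lemma is a direct unfolding of the definition of $\rch$, and in particular it does not rely on the DSC axioms (D0)--(D3) beyond what is implicit in the fact that $\dep$ is defined on the ambient set. The only mild point to notice is that when $y \in A' \cap B'$ one need not choose a single canonical depset, since the definition of $\rch$ only asks for the existence of some $D^y \in \dep(y)$ contained in the source.
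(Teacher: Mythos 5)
Your proof is correct and follows the same route as the paper's: both verify the two conditions of $\rch$ directly, with the enabling condition handled by noting that a depset contained in $A$ (resp.\ $B$) is a fortiori contained in $A \cup B$. Your version is slightly more explicit about the trivial inclusion $A \cup B \subseteq A' \cup B'$ and the case split, but there is no substantive difference.
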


\begin{proof}
If $a' \in A'$ and there exists $D^{a'} \subseteq A$ with $D^{a'} \in \dep(a')$, then $D^{a'} \subseteq A \cup B$, and similarly for all $b' \in B'$. Thus $(A \cup B) \rch (A' \cup B')$.
\end{proof}

\begin{Def} \label{def rdp}
Given a preDSC $(E, \dep)$, let $\cat{rdp}(E)$ denote the subset of $P(E)$ consisting of the reachable subsets. This inherits a partial order from the partial order $\subseteq$ on $P(E)$. We call $(\cat{rdp}(E), \subseteq)$ the \textbf{reachable dependency poset} of $E$.
\end{Def}

We now recall the following definitions. In this paper we will only consider finite posets.

\begin{Def}
Let $(P, \leq)$ be a finite poset, and $a,b \in P$.
\begin{itemize}
    \item  We say an element $z \in P$ is the \textbf{join} of $a$ and $b$ if it is the lowest upper bound or supremum of $a$ and $b$, i.e. $z \geq a$, $z \geq b$ and if $w \geq a$, $w \geq b$, then $w \geq z$. If the join of $a$ and $b$ exists, then it is unique and we denote it by $a \orr b$. A poset with binary joins is called a \textbf{join-semilattice}. A morphism of join-semilattices is an order preserving function that preserves joins.
    \item For a subset $S \subseteq P$ of a finite join-semilattice, we let $\bigvee S$ denote the supremum of all the elements of $S$. If $S = P$, then we denote $\bigvee P = \top$, and call this the \textbf{top element} of $P$.
    \item Similarly we define the \textbf{meet} of $a$ and $b$ to be the greatest lower bound or infimum of $a$ and $b$, and we denote it by $a \and b$. A poset with binary meets is called a \textbf{meet-semilattice}. A morphism of meet-semilattices is an order preserving function that preserves meets.
    \item For a subset $S \subseteq P$ of a finite meet-semilattice, let $\bigwedge S$ denote the infimum of all the elements of a finite subset $S$. If $S = P$, then we denote $\bigwedge S = \bot$ and call this the \textbf{bottom element} of $P$.
    \item A poset which is both a join-semilattice and a meet-semilattice is called a \textbf{lattice}. A morphism of lattices is an order preserving function that preserves both joins and meets.
\end{itemize}
Let $\cat{FinPos}$ denote the category of finite posets with order preserving morphisms, and similarly denote $\cat{FinMSLatt}, \cat{FinJSLatt}, \cat{FinLatt}$ for the categories of finite meet-semilattices, finite join-semilattices, and finite lattices respectively.
\end{Def}

\begin{Lemma} \label{lem reachable sets closed under union}
If $(E, \dep)$ is a preDSC, and $X,Y \subseteq E$ are reachable subsets, then $X \cup Y$ is a reachable subset, and is the join of $X$ and $Y$ in $\rdp(E)$.
\end{Lemma}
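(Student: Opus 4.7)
The plan is to prove the two assertions in order: first that $X \cup Y$ is reachable, and then that it is indeed the join in $\rdp(E)$. The first assertion is the substantive one, and it should follow from Lemma \ref{lem rch closed under additivity} together with a padding trick on the witnessing chains.

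By hypothesis, there exist sequences
$$\varnothing \rch X_0 \rch X_1 \rch \dots \rch X_n \rch X \quad \text{and} \quad \varnothing \rch Y_0 \rch Y_1 \rch \dots \rch Y_m \rch Y.$$
The first step I would take is to observe that $\varnothing \rch \varnothing$ holds vacuously, since the universally quantified condition defining $\rch$ ranges over an empty set. This allows me to pad the shorter sequence on the left with copies of $\varnothing$ so that both sequences have the same length, without loss of generality say $N$. After re-indexing I can write the two padded chains as
$$\varnothing = X'_0 \rch X'_1 \rch \dots \rch X'_N = X, \qquad \varnothing = Y'_0 \rch Y'_1 \rch \dots \rch Y'_N = Y.$$

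Then I would apply Lemma \ref{lem rch closed under additivity} termwise. Setting $Z_i = X'_i \cup Y'_i$, that lemma gives $Z_i \rch Z_{i+1}$ for every $i$, producing a chain
$$\varnothing = Z_0 \rch Z_1 \rch \dots \rch Z_N = X \cup Y,$$
which by definition witnesses that $X \cup Y$ is reachable and hence lies in $\rdp(E)$.

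For the join claim, the poset structure on $\rdp(E)$ is simply inclusion inherited from $P(E)$, so $X \cup Y$ is automatically an upper bound for $\{X, Y\}$ in $\rdp(E)$, and any reachable $W$ with $X \subseteq W$ and $Y \subseteq W$ satisfies $X \cup Y \subseteq W$. Thus $X \cup Y$ is the least upper bound in $\rdp(E)$, which finishes the proof. The only mildly delicate point is the padding: one must verify that $\varnothing \rch \varnothing$ to justify lengthening the chains, but this is immediate from the vacuous quantification. Everything else is a direct application of the previously proved additivity lemma.
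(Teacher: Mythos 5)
Your proof is correct and follows essentially the same route as the paper: both reduce to a termwise application of Lemma \ref{lem rch closed under additivity} after equalizing the lengths of the two witnessing chains. The only difference is cosmetic --- the paper pads the shorter chain at the end by repeating its last term (which implicitly uses that this term is complete, hence $\rch$-related to itself), whereas you pad at the front with $\varnothing$, justified vacuously, which is if anything slightly cleaner.
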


\begin{proof}
If $X$ and $Y$ are reachable, then there exist sequences $\{ X_i \}$ and $\{ Y_i \}$ such that
$$\varnothing \rch X_1 \rch X_2 \rch \dots \rch X_n \rch X$$
$$\varnothing \rch Y_1 \rch Y_2 \rch \dots \rch Y_m \rch Y$$
Suppose WLOG that $n \geq m$, then by Lemma \ref{lem rch closed under additivity} we have
$$\varnothing \rch X_1 \cup Y_1 \rch \dots \rch X_m \cup Y_m \rch X_{m+1} \cup Y_m \rch \dots \rch X_n \cup Y_m \rch X \cup Y,$$
thus $X \cup Y$ is reachable. It is easy to see that $X \cup Y$ must then be the join of $X$ and $Y$ in $\rdp(E)$.
\end{proof}

\begin{Cor} \label{cor rdp is a join semilattice}
Given a preDSC $(E, \dep)$, its reachable dependency poset $\rdp(E)$ is a finite join-semilattice.
\end{Cor}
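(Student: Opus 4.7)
The plan is to read off the corollary directly from Lemma \ref{lem reachable sets closed under union}. Since $E$ is finite by definition of a preDSC, the power set $P(E)$ is finite, and hence $\rdp(E) \subseteq P(E)$ is finite as a poset under the inherited order $\subseteq$.

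For the join-semilattice structure, it suffices to exhibit binary joins. Given any two reachable event sets $X, Y \in \rdp(E)$, Lemma \ref{lem reachable sets closed under union} asserts that $X \cup Y$ is again reachable, so lies in $\rdp(E)$, and is precisely the join of $X$ and $Y$ in $\rdp(E)$. That is the entire argument; no step presents an obstacle since the substantive work has been done in Lemma \ref{lem reachable sets closed under union}.

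It is worth noting in passing that $\varnothing$ is itself reachable (the conditions defining $\varnothing \rch \varnothing$ hold vacuously), so $\rdp(E)$ is nonempty and has a bottom element; combined with binary joins and finiteness this guarantees the existence of a top element $\top = \bigvee \rdp(E)$ as well. This observation is incidental to the corollary but will likely be used immediately afterward to upgrade the join-semilattice to a lattice.
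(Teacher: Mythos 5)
Your proof is correct and matches the paper's approach: the corollary is drawn directly from Lemma \ref{lem reachable sets closed under union} (closure of reachable sets under union, with the union being the join), together with the finiteness of $E$. The side remark that $\varnothing$ is reachable is also accurate and consistent with how the paper subsequently upgrades $\rdp(E)$ to a lattice.
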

 
Let $(E, \dep)$ and $(E', \dep')$ be preDSCs, and let $f: E \to E'$ be a function between their underlying sets. Then the preimage morphism $f^*: P(E') \to P(E)$ can be restricted to the subset of reachable subsets, $f^*: \rdp(E') \to P(E)$. We wish to consider the class of functions $f$ such that $f^*$ factors through $\rdp(E)$, which is equivalent to asking that $f^*$ preserves reachable subsets.

\begin{Def} \label{def morphism of DSCs}
Let $(E, \dep)$ and $(E', \dep')$ be preDSCs and suppose $f: E \to E'$ is a function between their underlying sets. We say that $f$ is a \textbf{morphism} of preDSCs if for every reachable subset $X \subseteq E'$, its preimage $f^*(X) \subseteq E$ is reachable. Let $\cat{PreDSC}$ denote the category of preDSCs.
\end{Def}

With this definition, the reachable dependency poset construction extends to a functor
\begin{equation} \label{eq rdp functor}
\rdp: \cat{PreDSC}^{\op} \to \cat{FinJSLatt},
\end{equation}
because by Lemma \ref{lem reachable sets closed under union} joins in $\rdp(E)$ are given by unions and given a map $f: (E, \dep) \to (E', \dep')$ the preimage map $f^* : \rdp(E') \to \rdp(E)$ preserves unions.

As a first approximation, preDSCs are a useful definition for modelling package management systems. However, it allows for dependency behavior that is unrealistic. For example, if $E = \{e \}$ and $\dep(e) = \{ \{ e \} \}$, then $e$ must depend on itself. We will now restrict our attention to certain subclasses of preDSCs that better approximate real-life package management systems.

If a preDSC $(E, \dep)$ is to model package management systems, then for a package $e \in E$, the depsets of $e$ should be minimal in some sense. For instance, if a package $e$ can be installed using only packages in a subset $D^e \subseteq E$, then it can certainly be installed using packages in $D^e \cup X$ for any subset $X \subseteq E$\footnote{This is a consequence of the fact that we are not attempting to model possible conflicts of package dependencies. Adding this feature to our model is future work.}.

\begin{Def} \label{def irredundant}
We say that a preDSC $(E, \dep)$ is \textbf{irredundant} if for every $e \in E$, and every pair $D_0^e, D_1^e \in \dep(e)$, if $D_0^e \subseteq D_1^e$, then $D_0^e = D_1^e$.
\end{Def}

If $(E, \dep)$ is an irredundant preDSC and $e \in E$, then the depsets $D^e \in \dep(e)$ model the smallest possible sets of other packages necessary to install $e$.

\begin{Def}
We say that a preDSC $(E, \dep)$ is \textbf{acyclic} if for every $e \in E$, and $D^e \in \dep(e)$, $e \notin D^e$.
\end{Def}

The acyclicity condition rules out the possibility that a package can depend on itself. However it does not rule out the possibility that a package $e$ must depend on another package $e'$ which itself must depend on $e$.

\begin{Def} \label{def can must relations}
Given $e, e' \in E$, if there exists some $D_0^e \in \dep(e)$ such that $e' \in D_0^e$, then we say that $e$ \textbf{can depend} on $e'$, which we denote by $e \leftarrow e'$. If $e' \in D^e$ for all $D^e \in \dep(e)$, then we say that $e$ \textbf{must depend} on $e'$, which we denote by $e \leftarrow_m e'$. Let $\leftarrow^*$ and $\leftarrow_m^*$ denote the transitive closure of the relations $\leftarrow$ and $\leftarrow_m$ respectively. We say that $e$ \textbf{must eventually depend} on $e'$ if $e \leftarrow_m^* e'$ and that $e$ \textbf{can eventually depend} on $e'$ if $e \leftarrow^* e'$.
\end{Def}

We wish to rule out the possibility that $e$ must eventually depend on $e$, for every $e \in E$\footnote{Note that this does not rule out the possibility that a package can eventually depend on itself.}.

\begin{Def}
Given a preDSC $(E, \dep)$, we call a subset $X \subseteq E$ \textbf{complete}, if for every $e \in X$, there exists a depset $D^e \in \dep(e)$ such that $D^e \subseteq X$. We say that a preDSC has \textbf{complete depsets} if for every $e \in E$, each depset $D^e \in \dep(e)$ is complete.
\end{Def}

\begin{Ex}
If $E = \{ a, b, c \}$ and 
\begin{equation*}
    \begin{aligned}
    \dep(a) &= \{ \{ b, c \} \} \\
    \dep(b) &= \{ \{a\}, \{c \} \} \\
    \dep(c) & = \{ \varnothing \}
    \end{aligned}
\end{equation*}
then the subset $\{b, c \}$ is complete, but $\{ a \}$ and $\{ b \}$ are not.
\end{Ex}

\begin{Lemma} \label{lem reachable implies complete}
Given a preDSC $(E, \dep)$, if a subset $X \subseteq E$ is reachable, then it is complete. Furthermore $X$ is complete if and only if $X \rch X$. 
\end{Lemma}

Finally we require that the dependency function actually assigns depsets to packages.

\begin{Def}
We say that a preDSC $(E, \dep)$ is \textbf{nonsingular} if for every $e \in E$, the set $\dep(e)$ is nonempty.  
\end{Def}

A nonsingular preDSC can of course still have the property that there is a package $e$ such that $\varnothing \in \dep(e)$. Combined with irredundancy, this implies that $e$ must depend on nothing, equivalently $\dep(e) = \{ \varnothing \}.$

\begin{Def} \label{def DSC}
We say a preDSC $(E, \dep)$ is a \textbf{dependency structure with choice} or a \textbf{DSC} if it is irredundant, acyclic, nonsingular and has complete depsets. Let $\cat{DSC}$ denote the full subcategory of $\cat{PreDSC}$ on the DSCs.
\end{Def}

These conditions are sufficient to prove that packages must not eventually depend on themselves.

\begin{Lemma}
Given a DSC $(E, \dep)$, it is not true that $e \leftarrow_m^* e$ for any $e \in E$.
\end{Lemma}

\begin{proof}
Suppose that $e \leftarrow_m^* e$. Then there exists a finite sequence $e_1, e_2, \dots, e_n$ of packages such that $e \leftarrow_m e_1$, $e_i \leftarrow_m e_{i + 1}$ for $1 \leq i \leq n-1$, and $e_n \leftarrow_m e$. Thus $e_1 \in D^e$ for every depset $D^e$. Since $D^e$ is complete by assumption, there exists a depset $D^{e_1}_0 \subseteq D^e$. But then $e_2 \in D^e$, since $e_1 \leftarrow_m e_2$. Continuing this argument, $e_i \in D^e$ implies there exists a $D^{e_i}_0$ such that $D^{e_i}_0 \subseteq D^e$, so $e_{i+1} \in D^e$. Thus $e \in D^e$, which is a contradiction.
\end{proof}

We list some properties of the two relations $\leftarrow$ and $\leftarrow_m$ for a DSC. We write $e \leftarrow \varnothing$ if $\varnothing\in \dep(e)$ and $e \leftarrow_m \varnothing$ if $\{ \varnothing \} = \dep(e)$.

\begin{Lemma} \label{lem props of relations on a DSC}
Given a DSC $(E, \dep)$ and $e, e', e'' \in E$
\begin{enumerate}
    \item if $e \leftarrow_m e'$ then $e \leftarrow e'$,
    \item if $e \leftarrow e'$ and $e' \leftarrow_m e''$, then $e \leftarrow e''$,
    \item $e \leftarrow \varnothing$ if and only if $e \leftarrow_m \varnothing$.
\end{enumerate}
\end{Lemma}

\begin{Def} \label{def covering relation on a poset}
Given a poset $(P, \leq)$, and $x, y \in P$, we write $x < y$ if $x \leq y$ and $x \neq y$. We say that $y$ \textbf{covers} $x$, and write $x \prec y$ if $x < y$ and there exists no $z \in P$ such that $x < z < y$. The Hasse diagram of $P$ is the directed graph whose nodes are the elements of $P$ and there is an edge from $x$ to $y$ if $x \prec y$. 
\end{Def}

\begin{Rem}
It is customary to drop the direction on the edges from the notation of the Hasse diagram and think of all of the edges as directed upwards. We follow this convention in this paper.
\end{Rem}

\begin{Ex}\label{ex DSC a depends on b or c}
Let $E = \{ a, b, c \}$, and $\dep(a) = \{ \{b \}, \{c \} \}$, namely $a$ can depend on $b$ or $c$, and $\dep(b) = \dep(c) = \varnothing$. Notice that this is a DSC. Consider its reachable dependency poset $\cat{rdp}(E)$. The Hasse diagram of this poset is given as follows. In our diagram, we write a subset like $\{a, b, c \}$ as $abc$.
\begin{equation}
\begin{tikzcd}
	& abc \\
	ab & bc & ac \\
	b && c \\
	& \varnothing
	\arrow[no head, from=1-2, to=2-2]
	\arrow[no head, from=3-1, to=2-2]
	\arrow[no head, from=2-2, to=3-3]
	\arrow[no head, from=4-2, to=3-1]
	\arrow[no head, from=4-2, to=3-3]
	\arrow[no head, from=2-1, to=3-1]
	\arrow[no head, from=2-3, to=3-3]
	\arrow[no head, from=2-1, to=1-2]
	\arrow[no head, from=1-2, to=2-3]
\end{tikzcd}
\end{equation}
For shorthand we will refer to this DSC as $a.b \join c$, to mean that $a$ depends on $b$ or $c$ which in turn do not depend on anything.
\end{Ex}

\begin{Ex} \label{ex DSC a depends on b and c}
Let us compare Example \ref{ex DSC a depends on b or c} with a DSC $E = \{a, b, c \}$ where now $\dep(a) = \{\{b,c\} \}$, and $\dep(b) = \dep(c) = \varnothing$, namely $a$ depends on $b$ \textit{and} $c$. In this case, the Hasse diagram of $\rdp(E)$ is 
\begin{equation}
\begin{tikzcd}
	& abc \\
	& bc \\
	b && c \\
	& \varnothing
	\arrow[no head, from=1-2, to=2-2]
	\arrow[no head, from=3-1, to=2-2]
	\arrow[no head, from=2-2, to=3-3]
	\arrow[no head, from=4-2, to=3-1]
	\arrow[no head, from=4-2, to=3-3]
\end{tikzcd}
\end{equation}
For shorthand we denote this DSC by $a.b \meet c$.
\end{Ex}

\begin{Ex}
Consider $E = \{ a, b, c \}$ and $\dep(a) = \{b \}, \, \dep(b) = \varnothing, \, \dep(c) = \{ b\}$. Its reachable dependency poset is given by the Hasse diagram:
\begin{equation}
    \begin{tikzcd}
	& abc \\
	ab && cb \\
	& b \\
	& \varnothing
	\arrow[no head, from=3-2, to=4-2]
	\arrow[no head, from=2-1, to=3-2]
	\arrow[no head, from=2-3, to=3-2]
	\arrow[no head, from=1-2, to=2-1]
	\arrow[no head, from=1-2, to=2-3]
\end{tikzcd}
\end{equation}
For shorthand we will denote this DSC by $a.b,c.b$, to mean that $a$ depends on $b$ and $c$ depends on $b$.
\end{Ex}

\begin{Rem}
Notice that the intersection of two reachable subsets might not be reachable. Using the same DSC $a. b \join c$ from Example \ref{ex DSC a depends on b or c} we see that while $\{a,b \}$ and $\{a, c \}$ are reachable, their intersection $\{a \}$ is not.
\end{Rem}

Now we will prove that reachability and completeness are equivalent conditions on a subset of a DSC. Let $E$ be a DSC and $X \subseteq E$. If $e,e' \in X$, then we write $e \leftarrow_{X,m} e'$ if for every depset $D^e$ such that $D^e \subseteq X$, $e' \in D^e$. We say that $e$ \textbf{must depend on $e'$ within $X$}. If $X = E$, then $e \leftarrow_{X,m} e'$ if and only if $e \leftarrow_m e'$.

\begin{Lemma} \label{lem properties of lhd X relation}
For any nonempty subset $X \subseteq E$, $\leftarrow_{X,m}$ is a transitive relation. If $X$ is a nonempty complete subset, then $\leftarrow_{X,m}$ is also irreflexive.
\end{Lemma}

\begin{proof}
Suppose that $e \leftarrow_{X,m} e'$ and $e' \leftarrow_{X,m} e''$. Now suppose that $D^{e''}$ is a depset and $D^{e''} \subseteq X$. Then $e' \in D^{e''}$, since $e' \leftarrow_{X,m} e''$. But $D^{e''}$ is complete, since $E$ is a DSC, so there exists a depset $D^{e'}$ such that $D^{e'} \subseteq D^{e''}$. But in that case $D^{e'} \subseteq X$, so $e \in D^{e'}$ since $e \leftarrow_{X,m} e'$, which implies that $e \in D^{e''}$. Thus $e \leftarrow_{X,m} e''$.

Now if $X$ is a nonempty complete subset of $E$, and $e \leftarrow_{X,m} e$, then since $e \in X$, there exists a depset $D^e_0$ such that $D^e_0 \subseteq X$. But then $e \in D^e_0$, which is not possible since $(E, \dep)$ is a DSC. Thus $\leftarrow_{X,m}$ is irreflexive.
\end{proof}

\begin{Lemma} \label{lem can remove elements from complete subsets} 
Suppose $E$ is a DSC, and $X \subseteq E$ is a nonempty complete subset. Then there exists an $e \in X$ such that $(X \setminus e)$ is complete.
\end{Lemma}

\begin{proof}
Suppose not, then $(X \setminus e)$ is not complete for any $e \in X$. Since $X$ is nonempty, choose $e_0 \in X$. Then $(X \setminus e_0)$ is not complete. Thus there exists some $e_1 \in (X \setminus e_0)$ such that none of its depsets are contained in $(X \setminus e_0)$. However since $X$ is complete, and $e_1 \in X$, there exists a depset $D^{e_1} \subseteq X$. Since $D^{e_1} \nsubseteq (X \setminus e_0)$, this implies that $e_0 \in D^{e_1}$. Furthermore, this is true for any depset of $e_1$ that is a subset of $X$. Therefore $e_1 \leftarrow_{X,m} e_0$.

Now $(X \setminus e_1)$ is also not complete, so by the same argument there exists some $e_2$ such that $e_2 \leftarrow_{X,m} e_1$. If $|X| = n$, and we repeat this process $n$ times, we end up with a sequence
$$e_n \leftarrow_{X,m} e_{n-1} \leftarrow_{X,m} \dots \leftarrow_{X,m} e_1 \leftarrow_{X,m} e_0$$
such that either there is a cycle, i.e. $e_i = e_j$ for some $0 \leq i < j \leq n$, or the above sequence enumerates all of $X$. 

By Lemma \ref{lem properties of lhd X relation}, $\leftarrow_{X,m}$ is transitive, so if there is a cycle in the above sequence, we have $e_k \leftarrow_{X,m} e_k$ for some $0 \leq k \leq n$. However, also by Lemma \ref{lem properties of lhd X relation}, $\leftarrow_{X,m}$ is irreflexive, so we have a contradiction. 

Now suppose the above sequence enumerates all of the elements of $X$. Since $(X \setminus e_n )$ is not complete by assumption, by the same argument as above, there exists some $e_k$ such that $e_k \leftarrow_{X,m} e_n$. Therefore one obtains a cycle. However, by the previous paragraph, this cannot happen, so we have a contradiction. Therefore, there must exist some $e \in X$ such that $(X \setminus e )$ is complete.
\end{proof}

\begin{Prop} \label{prop reachability equiv to complete}
Let $E$ be a DSC, then a subset $X \subseteq E$ is reachable if and only if it is complete.
\end{Prop}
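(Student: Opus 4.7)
The proof plan has two directions, the first being nearly immediate and the second leveraging the combinatorial lemmas already established.

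For the forward direction (reachable implies complete), I would simply unwrap the last step of the defining chain $\varnothing \rch X_0 \rch \dots \rch X_n \rch A$. By definition of $\rch$, we have $X_n \subseteq A$ and for every $a \in A$ there is $D^a \in \dep(a)$ with $D^a \subseteq X_n$. Since $X_n \subseteq A$, this $D^a$ is contained in $A$, so $A$ is complete.

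For the backward direction (complete implies reachable), the plan is induction on $|A|$. The base case $A = \varnothing$ is immediate since $\varnothing \rch \varnothing$ vacuously. For the inductive step, apply Lemma \ref{lem can remove elements from complete event sets} to obtain an element $a \in A$ such that $A \setminus a$ is still complete. By the induction hypothesis $A \setminus a$ is reachable, say via a chain $\varnothing \rch X_0 \rch \dots \rch X_n \rch (A \setminus a)$. I would then append one more $\rch$-step by showing $(A \setminus a) \rch A$. The containment $A \setminus a \subseteq A$ is trivial, so it remains to verify the enabling condition for each $b \in A$. For $b \neq a$, completeness of $A \setminus a$ itself provides a depset $D^b \in \dep(b)$ with $D^b \subseteq A \setminus a$. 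For $b = a$, completeness of $A$ yields $D^a \in \dep(a)$ with $D^a \subseteq A$, and axiom (D2) forces $a \notin D^a$, hence $D^a \subseteq A \setminus a$. This establishes $(A \setminus a) \rch A$ and completes the chain.

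The only substantive obstacle is ensuring that we can peel off one element at a time while staying complete, but this is precisely the content of Lemma \ref{lem can remove elements from complete event sets}, which in turn rested on the acyclicity of $\lhd_A$ guaranteed by (D2) and (D3). Once that lemma is available, the rest of the argument is essentially bookkeeping, and the induction carries through cleanly. The use of (D2) in the $b = a$ case of the enabling condition is the key spot where the "no self-dependency" axiom of a DSC, rather than merely a preDSC, is essential to the conclusion.
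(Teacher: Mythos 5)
Your proposal is correct and follows essentially the same route as the paper: the forward direction unwraps the final $\rch$ step identically, and the backward direction is the paper's argument of peeling off one element at a time via Lemma \ref{lem can remove elements from complete event sets}, merely phrased as an explicit induction on $|A|$ rather than the paper's enumeration-and-reordering of $A$. The verification that $(A \setminus a) \rch A$, including the use of (D2) to get $a \notin D^a$, matches the paper's proof exactly.
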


\begin{proof}
$(\Rightarrow)$ This is Lemma \ref{lem reachable implies complete}.

$(\Leftarrow)$ Suppose that $X$ is a complete subset. If $X$ is empty, then it is reachable, so assume that it is nonempty. By Lemma \ref{lem can remove elements from complete subsets}, there exists an $e_0 \in X$ such that $(X \setminus e_0 )$ is complete. Thus if $e \in (X \setminus e_0 )$, then there exists a depset $D^e \subseteq (X \setminus e_0)$. Since $X$ is complete and $e_0 \in X$, there exists a depset $D^{e_0} \subseteq X$. Since $E$ is a DSC, $e_0 \notin D^{e_0}$, thus $D^{e_0} \subseteq (X \setminus e_0 )$, thus $(X \setminus  e_0 ) \rch X$. Since $X$ is finite, say with cardinality $|X| = n$, we can repeat the above argument until we have exhausted all of $X$, obtaining a sequence
\begin{equation*}
    \varnothing \rch (X \setminus \{e_0, \dots, e_{n-1} \}) \rch \dots \rch (X \setminus e_0) \rch X.
\end{equation*}
Thus $X$ is reachable.
\end{proof}

\begin{Rem}
For the rest of the paper, we will not distinguish between reachable and complete subsets of a DSC, and will mostly refer to them as complete subsets.
\end{Rem}

Let us now analyze the structure of $\rdp(E)$ for a DSC $(E, \dep)$ more deeply.

\begin{Def} \label{def e-minimal complete subsets}
Let $(E, \dep)$ be a DSC, $e \in E$, and $X \subseteq E$. Say $X$ is an \textbf{$e$-minimal complete} subset if $X$ is complete, $e \in X$ and if for every complete subset $Y \subseteq X$ such that $e \in Y$, it follows that $X = Y$. Let $\text{Min}(e)$ denote the set of $e$-minimal complete subsets of $E$.
\end{Def}

\begin{Lemma} \label{lem complete subsets contain e-minimal complete subsets}
Suppose $(E, \dep)$ is a DSC, $e \in E$ and $X \subseteq E$ is a complete subset. If $e \in X$, then there exists an $e$-minimal complete subset $M_e \subseteq E$ such that $M_e \subseteq X$.
\end{Lemma}

\begin{Prop} \label{prop characterization of e-minimal complete subsets of a DSC}
Given a DSC $(E, \dep)$ and $e \in E$, a subset $X \subseteq E$ is an $e$-minimal complete subset if and only if it is of the form $D^e \cup e$, with $D^e \in \dep(e)$.
\end{Prop}

\begin{proof}
$(\Leftarrow)$ First note that if $D^e \in \dep(e)$, then $D^e \cup e$ is a complete subset. Now let us show that $D^e \cup e$ is an $e$-minimal complete subset. Suppose that $X$ is complete, $X \subseteq (D^e \cup e)$ and $e \in X$. Then there exists a depset $D^e_0$ of $e$ such that $D^e_0 \subseteq (X \setminus e)$, because $X$ is complete, and depsets of $e$ can't contain $e$. This implies that $D^e_0 \subseteq D^e$, but $(E, \dep)$ is irredundant, which implies $D^e_0 = D^e$, in which case $X = (D^e \cup e)$.

$(\Rightarrow)$ Now suppose that $X$ is an $e$-minimal complete subset. Then $e \in X$, and since $X$ is complete, there exists some depset $D^e \subseteq X$. Clearly $(D^e \cup e) \subseteq X$, and thus since $X$ is an $e$-minimal complete subset, $(D^e \cup e) = X$.
\end{proof}

\begin{Def} \label{def join irreducible}
Given a join-semilattice $L$, an element $x \in L$ is \textbf{join-irreducible} if $x \neq \bot$ and whenever $a \join b = x$ then $a = x$ or $b = x$. Let $\mathcal{J}(L)$ denote the subposet of join irreducible elements.
\end{Def}

Given a DSC $(E, \dep)$, we can characterize the join-irreducible elements of $\rdp(E)$.

\begin{Prop} \label{prop join irreducible iff e-minimal}
Given a DSC $(E, \dep)$, a nonempty and complete subset $X \subseteq E$ is join-irreducible in $\rdp(E)$ if and only if $X$ is an $e$-minimal complete subset for some $e \in E$.
\end{Prop}

\begin{proof}
$(\Leftarrow)$ Suppose that $X$ is an $e$-minimal complete subset and $X = A \cup B$, where $A,B$ are complete. Since $e \in X$, then $e \in A$ or $e \in B$ or both. Without loss of generality suppose that $e \in A$. Then by definition of $e$-minimal complete subsets, $X = A$. Thus $X$ is join-irreducible.

$(\Rightarrow)$ Suppose that $X$ is a complete subset of $E$ but not an $e$-minimal complete subset for any $e \in E$. We wish to show that $X$ is not join-irreducible. If $X$ is not $e$-minimal complete for any $e \in E$, then it is not $x$-minimal complete for any $x \in X$. Therefore, for every $x \in X$, by Lemma \ref{lem complete subsets contain e-minimal complete subsets} there exists an $x$-minimal complete subset $M_x \subseteq X$. Therefore $X = \bigcup_{x \in X} M_x$, so $X$ is a join of finitely many complete subsets. Thus $X$ is not join-irreducible.
\end{proof}

\begin{Prop} \label{prop uniqueness of join irreducible rep}
Let $(E, \dep)$ be a DSC and suppose that $X \subseteq E$ is join-irreducible in $\rdp(E)$. Then there exists a unique $e \in E$ and $D^e \in \dep(e)$ such that $E = (D^e \cup e)$.
\end{Prop}

\begin{proof}
By Propositions \ref{prop characterization of e-minimal complete subsets of a DSC} and \ref{prop join irreducible iff e-minimal} we know that $X = (D^e \cup e)$ for some $e \in E$ and $D^e \in \dep(e)$. Now suppose that $X = (D^{e'} \cup e')$ for $e' \in E$ and $D^{e'} \in \dep(e')$, where $e \neq e'$. Since $E$ is a DSC, both $D^e$ and $D^{e'}$ are complete subsets of $E$, and furthermore $D^e \subseteq X$ and $D^{e'} \subseteq X$, so $D^e \cup D^{e'} \subseteq X$. Since $(D^e \cup e) = (D^{e'} \cup e')$, it must be the case that $e \in D^{e'}$ and $e' \in D^e$, so  $D^e \cup D^{e'} = X$. But $X$ is join-irreducible, and union is the join in $\rdp$ so $X = D^e$ or $X = D^{e'}$. But $X$ contains both $e$ and $e'$, and since $(E, \dep)$ is acyclic, there is a contradiction. Thus $e = e'$. If $X = (D^e_0 \cup e) = (D^e_1 \cup e)$, then clearly $D^e_0 = X \setminus e = D^e_1$.
\end{proof}

Now let us show that $\rdp(E)$ is a lattice. This is a general fact about finite join-semilattices. Indeed, if $P$ is a finite join-semilattice with a bottom element and $X,Y \in P$, then their meet always exists and is given by
\begin{equation} \label{eqn defining meets in a join-semilattice}
    X \meet Y = \bigvee \left \{ Z \in P \, : \, Z \leq X, Z \leq Y \right \}.
\end{equation}
In the case of $\rdp(E)$, this means that the meet of two reachable subsets $X$ and $Y$ will be the union of all those reachable subsets that are a subset of $X \cap Y$.

\begin{Lemma} \label{lem characterization of meets in DSC}
Let $(E, \dep)$ be a DSC, and $A, B \subseteq E$ complete subsets. Then
$$A \meet B = \{ x \in A \cap B \, : \, \exists D^e \in \dep(e) \, \text{ such that } D^e \subseteq A \cap B \}.$$
\end{Lemma}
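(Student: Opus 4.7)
The plan is to show two inclusions between $A \meet B$ and the set $C := \{ x \in A \cap B : \exists D^x \in \dep(x) \text{ with } D^x \subseteq A \cap B \}$ (reading the statement with $e$ replaced by $x$, matching the element being quantified). The starting point is equation (\ref{eqn defining meets in a join-semilattice}) combined with Lemma \ref{lem reachable sets closed under union}: in $\rdp(E)$ the join of a family of complete sets is their union, so $A \meet B$ is literally the union of all complete event sets contained in $A \cap B$. Thus a point $x$ belongs to $A \meet B$ iff some complete subset of $A \cap B$ contains $x$.

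For the inclusion $A \meet B \subseteq C$, I would take $x \in A \meet B$; by the characterization just recalled, there is a complete subset $Z \subseteq A \cap B$ with $x \in Z$. Completeness of $Z$ produces $D^x \in \dep(x)$ with $D^x \subseteq Z \subseteq A \cap B$, which is exactly the defining condition for $x \in C$.

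For the reverse inclusion $C \subseteq A \meet B$, the key observation is axiom (D3): any depset is itself complete. So given $x \in C$ with witnessing $D^x \subseteq A \cap B$, consider the set $D^x \cup \{x\}$. It is complete: every $y \in D^x$ has some depset contained in $D^x$ (as $D^x$ is complete by (D3)), and $x$ itself has the depset $D^x \subseteq D^x \cup \{x\}$. Moreover $D^x \cup \{x\} \subseteq A \cap B$ since $x \in A \cap B$ and $D^x \subseteq A \cap B$. So $D^x \cup \{x\}$ is a complete subset of $A \cap B$ containing $x$, placing $x$ in $A \meet B$.

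I don't expect a real obstacle here; the only subtle point is noticing that axiom (D3) is what makes the second direction go through --- without it, $D^x$ might contain elements $y$ for which no depset sits inside $A \cap B$, and $C$ would then fail to coincide with the meet. A small additional care is to confirm that in the statement the quantified dependency set should be read as $D^x \in \dep(x)$ (i.e., attached to the element $x$ ranging over $A \cap B$), since otherwise the formula would not type-check.
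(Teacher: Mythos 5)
Your proof is correct and follows essentially the same route as the paper's: both directions rest on the identification of $A \meet B$ with the union of all complete subsets of $A \cap B$, and both use (D3) for the inclusion $C \subseteq A \meet B$ (the paper verifies that the whole set $C$ is complete, while you exhibit the smaller complete witness $D^x \cup \{x\}$ for each $x$ --- an immaterial variation). Your reading of the quantifier as $D^x \in \dep(x)$ is also the intended one; the paper's own proof uses it that way.
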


\begin{proof}
Let $C = \{ x \in A \cap B \, : \, \exists D^e \in \dep(e) \, \text{ such that } D^e \subseteq A \cap B \}.$ Now $C \subseteq A \cap B$. We wish to show that $C$ is complete. So suppose $x \in C$. Then there exists $D^x \in \dep(x)$ such that $D^x \subseteq A \cap B$. If $y \in D^x$, then since $D^x$ is complete, there exists a $D^y \subseteq D^x$ with $D^y \in \dep(y)$. Thus $D^y \subseteq A \cap B$, so $y \in C$. Since $y$ was arbitrary, $D^x \subseteq C$, so $C$ is complete. Thus $C \subseteq A \meet B$. We wish to prove the opposite inclusion. By (\ref{eqn defining meets in a join-semilattice}) if $x \in A \meet B$ then $x$ belongs to some set $X \subseteq A \cap B$ that is complete, so there exists a $D^x \subseteq X$, thus $x \in C$. Thus $A \meet B = C$.
\end{proof}

\begin{Def}
We say that a lattice $L$ is \textbf{distributive} if for every $x,y,z \in L$, the following identity holds:
\begin{equation} \label{eqn distributive identity}
 x \meet (y \join z) = (x \meet y) \join (x \meet z).
\end{equation}
\end{Def}

Distributive lattices enjoy a key characterization theorem due to Birkhoff \cite{birkhoff1940lattice}. Consider the following lattice, known as $M_3$ or the \textbf{diamond lattice}, which has five elements, $\bot, a, b, c, \top$, and order structure presented by the Hasse diagram:
\begin{equation} \label{eqn diamond lattice}
    \begin{tikzcd}
	& \top \\
	a & b & c \\
	& \bot
	\arrow[no head, from=1-2, to=2-1]
	\arrow[no head, from=1-2, to=2-2]
	\arrow[no head, from=1-2, to=2-3]
	\arrow[no head, from=2-1, to=3-2]
	\arrow[no head, from=2-2, to=3-2]
	\arrow[no head, from=2-3, to=3-2]
\end{tikzcd}
\end{equation}
Note that this lattice is not distributive because $a \meet (b \join c) = a \meet \top = a$, but $(a \meet b) \join (a \meet c) = \bot \join \bot = \bot$.

Now let $N_5$ denote the following lattice, also known as the \textbf{pentagon lattice}, which also has five elements, $\bot, d, e, f, \top$ with the following order structure:
\begin{equation} \label{eqn pentagon lattice}
\begin{tikzcd} 
	& \top \\
	d && f \\
	e \\
	& \bot
	\arrow[no head, from=1-2, to=2-1]
	\arrow[no head, from=1-2, to=2-3]
	\arrow[no head, from=3-1, to=4-2]
	\arrow[no head, from=2-3, to=4-2]
	\arrow[no head, from=2-1, to=3-1]
\end{tikzcd}
\end{equation}
Note that this lattice is also not distributive.

\begin{Def}
Given a lattice $L$, a subset $S \subseteq L$ is called a \textbf{sublattice} if for all $x,y \in S$, $x \meet y \in S$ and $x \join y \in S$.
\end{Def}

\begin{Th}[{\cite[Theorem 4.10]{davey2002introduction}}]
A lattice $L$ is distributive if and only if there exists no sublattice of $L$ that is isomorphic to $M_3$ or $N_5$.
\end{Th}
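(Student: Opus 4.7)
The forward direction is immediate: the distributive law $x \wedge (y \vee z) = (x \wedge y) \vee (x \wedge z)$ is a universal identity, so it is inherited by every sublattice. The excerpt already verifies that $M_3$ and $N_5$ are not distributive, so a distributive lattice cannot contain a sublattice isomorphic to either of them.

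For the converse, I would prove the contrapositive: if $L$ is not distributive, then $L$ has a sublattice isomorphic to $M_3$ or $N_5$. Recall that $L$ is \emph{modular} if $a \leq c$ implies $a \vee (b \wedge c) = (a \vee b) \wedge c$. The plan splits into two cases according to whether or not $L$ is modular, and in each case I exhibit the offending sublattice explicitly.

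First, suppose $L$ is not modular. Then there exist $a,b,c \in L$ with $a \leq c$ but $a \vee (b \wedge c) < (a \vee b) \wedge c$ (the inequality $\leq$ always holds). I claim the five elements
\[
p := b \wedge c, \quad r := a \vee (b \wedge c), \quad s := (a \vee b) \wedge c, \quad q := a \vee b, \quad b
\]
form a sublattice isomorphic to $N_5$. The chain $p < r < s < q$ follows from the non-modular failure together with $a \leq c$, and one checks using absorption that $b \wedge r = b \wedge s = p$ and $b \vee r = b \vee s = q$. Incomparability of $b$ with each of $r,s$ then follows: for instance $r \leq b$ would give $a \leq b$ hence $q = b$ and $s = p$, contradicting $r < s$. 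This matches the Hasse diagram (\ref{eqn pentagon lattice}).

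Second, suppose $L$ is modular but not distributive. Choose $x,y,z \in L$ witnessing the failure of distributivity. Following the standard construction, set
\[
u := (x \wedge y) \vee (y \wedge z) \vee (x \wedge z), \qquad v := (x \vee y) \wedge (y \vee z) \wedge (x \vee z),
\]
and then
\[
a' := (x \wedge v) \vee u, \qquad b' := (y \wedge v) \vee u, \qquad c' := (z \wedge v) \vee u.
\]
I claim that $\{u, a', b', c', v\}$ is a sublattice isomorphic to $M_3$ as in (\ref{eqn diamond lattice}). The plan is to use modularity repeatedly to show that $a' \vee b' = b' \vee c' = a' \vee c' = v$ and $a' \wedge b' = b' \wedge c' = a' \wedge c' = u$, and that all of $a', b', c'$ are strictly between $u$ and $v$ — the strictness is precisely where the non-distributivity hypothesis is needed.

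The main obstacle is the modular but not distributive case: the symmetric expressions for $u$ and $v$ are natural, but verifying the six join/meet identities among $a', b', c'$ requires several careful applications of the modular law combined with the obvious inequalities such as $u \leq x \wedge v \leq v$ and symmetric variants, and then separating out the proof that $a' \neq u$ (equivalently $a' \neq v$, by the symmetry of $M_3$) from the non-distributive witness. The pentagon case is largely bookkeeping, whereas the diamond case is the heart of Birkhoff's argument.
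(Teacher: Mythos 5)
The paper offers no proof of this statement at all---it is quoted as a known result with a citation to Birkhoff---so there is nothing internal to compare against; your proposal follows the classical Dedekind--Birkhoff argument, which is the right route. The forward direction and the non-modular (pentagon) case are essentially complete as you present them: the five elements are the correct ones, and your derivations of $b \wedge r = b \wedge s = b \wedge c$, $b \vee r = b \vee s = a \vee b$, and the incomparability of $b$ with $r$ and $s$ are sound (note that the strict inequalities $p < r$ and $s < q$ also need a word, but they follow from the same style of argument: $p = r$ forces $a \leq b$, hence $q = b$ and $s = p$, contradicting $r < s$).

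The diamond case, however, is left as a plan rather than a proof, and it is precisely the part of the theorem that carries the mathematical content. Two things are missing. First, the six identities $a' \vee b' = v$, $a' \wedge b' = u$, etc., each require an explicit application of the modular law (e.g.\ $a' \wedge b' = \bigl((x \wedge v) \vee u\bigr) \wedge \bigl((y \wedge v) \vee u\bigr) = u \vee \bigl((x \wedge v) \wedge ((y \wedge v) \vee u)\bigr)$ using $u \leq (y \wedge v) \vee u$, followed by further reductions); asserting that they ``require several careful applications of the modular law'' is an acknowledgement of the gap, not a closure of it. Second, and more importantly, you never actually establish $u < v$, which is the only place the non-distributivity of the witness triple enters. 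The clean way is to verify, using modularity, that $x \wedge v = x \wedge (y \vee z)$ (immediate, since $x \leq x \vee y$ and $x \leq z \vee x$) and that $x \wedge u = (x \wedge y) \vee (x \wedge z)$ (one application of the modular law with $(x \wedge y) \vee (x \wedge z) \leq x$); then $u = v$ would force $x \wedge (y \vee z) = (x \wedge y) \vee (x \wedge z)$, contradicting the choice of $x,y,z$. Once $u < v$ is in hand, pairwise distinctness of $u, a', b', c', v$ follows formally from the six identities (e.g.\ $a' = b'$ would give $u = a' \wedge b' = a' = a' \vee b' = v$). Without these steps the converse is not proved.
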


Many examples of lattices are distributive, for example if $S$ is a set, then its lattice of subsets $P(S)$ is distributive. However it is typically not the case that if $E$ is a DSC, then $\rdp(E)$ will be a distributive lattice.

\begin{Ex} \label{ex rdp of a depends on b or c is not distributive}
Notice that Example \ref{ex DSC a depends on b or c} is not a distributive lattice, as
$$b \join (ab \meet ac) = b \join \varnothing = b$$
whereas
$$(b \join ab) \meet (b \join ac) = ab \meet abc = ab.$$
In other words $\{ \varnothing, b, ab, ac, abc \}$ is a sublattice of $a.b \join c$ that is isomorphic to $N_5$. In fact it contains two such sublattices, the other being $\{ \varnothing, c, ac, ab, abc \}$.
\end{Ex}

We will see in Section \ref{section antimatroids and dscs} that while the lattices in the image of $\rdp$ are not all distributive, they satisfy a weaker condition called diamond-free semimodularity, which means that they will not contain a copy of the diamond lattice $M_3$, but they could possibly contain the pentagon lattice $N_5$. Further, in Section \ref{section dsncs}, we will show that there is a subclass of DSCs whose image under $\rdp$ is precisely the set of distributive lattices.

\begin{Lemma} \label{lem morphism iff preserves reachables}
Given DSCs $(E, \dep)$ and $(E', \dep')$ and a set function $f: E \to E'$, then $f$ is a morphism of DSCs if and only if for all $e \in E$, and all $D^{f(e)} \in \dep(f(e))$, there exists a $D^e \in \text{dep}(e)$ such that $D^e \subseteq f^* \left( D^{f(e)} \cup f(e) \right)$.
\end{Lemma}

\begin{proof}
$(\Leftarrow)$ Since we are only considering DSCs, by Proposition \ref{prop reachability equiv to complete} a subset is reachable if and only if it is complete. Suppose that $A \subseteq E'$ is complete. We wish to show that $f^* (A)$ is complete. Suppose $e \in f^*(A)$. Then $f(e) \in A$, but $A$ is complete so there exists a depset $D^{f(e)} \subseteq A$. Since $f$ is a morphism of DSCs, there exists a depset $D^e \subseteq f^*(D^{f(e)} \cup f(e))$. Since $(D^{f(e)} \cup f(e)) \subseteq A$, this implies that $D^e \subseteq f^*(A)$, so $f^*(A)$ is complete.

$(\Rightarrow)$ Suppose that $f^*$ preserves complete subsets, $e \in E$ and $D^{f(e)} \in \dep(f(e))$. Then $D^{f(e)} \cup f(e)$ is complete, so $f^* \left( D^{f(e)} \cup f(e) \right)$ is complete and contains $e$, thus there exists some $D^e \in \text{dep}(e)$ such that $D^e \subseteq f^* \left( D^{f(e)} \cup f(e) \right)$. Thus $f$ is a morphism of DSCs.
\end{proof}

\begin{Rem}
By Proposition \ref{prop characterization of e-minimal complete subsets of a DSC} the above condition is equivalent to the following: $f$ is a morphism of DSCs if for every $e \in E$ and $f(e)$-minimal complete subset $M_{f(e)}$, then the preimage $f^*(M_{f(e)})$ contains an $e$-minimal complete subset.
\end{Rem}

\begin{Def} \label{def comorphism}
Given preDSCs $(E, \dep)$ and $(E', \dep')$ and a set function $f: E \to E'$, we say that $f$ is a \textbf{comorphism} if for every reachable subset $X \subseteq E$, its image $f_*(X) \subseteq E'$ is reachable. Let $\cat{PreDSC}_{\text{co}}$ and $\cat{DSC}_{\text{co}}$ denote the category of preDSCs and DSCs with comorphisms, respectively.
\end{Def}

\begin{Lemma} \label{lem co-morphism iff preserves reachables}
Let $f: E \to E'$ be a function between the underlying sets of DSCs. Then $f$ is a comorphism if and only if for all $e \in E$, and all $D^e \in \dep(e)$, there exists a $D^{f(e)} \in \dep(f(e))$ such that $D^{f(e)} \subseteq f_*(D^e \cup e)$.
\end{Lemma}

\begin{proof}
$(\Leftarrow)$ Since $(E, \dep)$ is a DSC, we can equivalently work with complete subsets. Suppose that $A \subseteq E$ is complete, we want to show that $f_*(A)$ is complete. Suppose $y \in f_*(A)$, then there exists an $e \in A$ such that $y = f(e)$. Since $A$ is complete there exists a depset $D^e \subseteq A$. By hypothesis, there exists a depset $D^{f(e)} \subseteq f_* \left( D^e \cup e \right)$. This implies that $D^{f(e)} \subseteq f_*(A)$. Thus $f_*(A)$ is complete.

$(\Rightarrow)$ Suppose that $f_*$ preserves complete subsets, $e \in E$ and $D^e \in \dep(e)$. Then $D^e \cup e$ is complete, so $f_* \left( D^e \cup e \right)$ is complete and contains $f(e)$, thus there exists a depset $D^{f(e)} \subseteq f_* \left( D^e \cup e \right)$.
\end{proof}

\begin{Cor}
If $f$ is a comorphism of preDSCs, then $f_*$ descends to an order preserving morphism of posets $f_*: \rdp(E) \to \rdp(E')$. Further, as joins are given by unions, this is a morphism of join-semilattices.
\end{Cor}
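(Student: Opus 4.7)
The plan is to leverage Lemma \ref{lem co-map iff preserves reachables} together with elementary properties of the image map $f_*$ on power sets; this corollary is essentially dual to Corollary \ref{cor rdp is functorial} and should go through by the same pattern.

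First, I would argue that $f_*$ restricts to a well-defined function $\rdp(E) \to \rdp(E')$. By Lemma \ref{lem co-map iff preserves reachables}, if $f$ is a comorphism then $f_*: P(E) \to P(E')$ sends reachable event sets to reachable event sets. Hence the restriction of $f_*$ to $\rdp(E) \subseteq P(E)$ factors through $\rdp(E') \subseteq P(E')$, giving a set-theoretic map $f_*: \rdp(E) \to \rdp(E')$.

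Next, I would check order preservation, which is a general property of image maps and makes no use of the comorphism hypothesis. If $A \subseteq B$ in $\rdp(E)$, then for any $b' \in f_*(A)$ we have $b' = f(a)$ for some $a \in A \subseteq B$, so $b' \in f_*(B)$. Hence $f_*(A) \subseteq f_*(B)$, i.e.\ $f_*$ is order preserving.

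Finally, I would verify join preservation. By Lemma \ref{lem reachable sets closed under union}, the join of two reachable event sets $A, B \in \rdp(E)$ is simply $A \cup B$, and likewise in $\rdp(E')$. For any set function $f: E \to E'$ we have the elementary identity $f_*(A \cup B) = f_*(A) \cup f_*(B)$, since $y \in f_*(A \cup B)$ iff $y = f(x)$ for some $x \in A \cup B$ iff $y \in f_*(A)$ or $y \in f_*(B)$. Combined with the previous step, this shows $f_*(A \vee B) = f_*(A) \vee f_*(B)$ in $\rdp(E')$, so $f_*$ is a morphism of join-semilattices. There is no substantive obstacle here; the only content is the already-proved Lemma \ref{lem co-map iff preserves reachables}, and the rest is routine.
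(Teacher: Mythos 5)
Your proof is correct and follows exactly the route the paper intends: the paper states this corollary without proof as an immediate consequence of Lemma \ref{lem co-map iff preserves reachables}, with order preservation and preservation of unions by $f_*$ being elementary. Nothing is missing.
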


Now we wish to define a category $\cat{DSC}_{\text{bi}}$ whose objects are DSCs, and with morphisms such that $\rdp$ is a functor of type $\rdp: \cat{DSC}^{\op}_{\text{bi}} \to \cat{FinLatt}$, where $\cat{FinLatt}$ is the category whose objects are finite lattices and whose morphisms are order, join and meet-preserving functions. In other words, we wish to find a class of set functions $f$ such that $\rdp(f) = f^*$ preserves reachable subsets and is a lattice morphism, i.e. preserves joins and meets. We have already established join preservation, and now turn our attention to meets.

\begin{Lemma} \label{lem morphism comorphism}
If $f: E \to E'$ is a morphism and comorphism of DSCs, then $f^*: \rdp(E') \to \rdp(E)$ preserves meets.
\end{Lemma}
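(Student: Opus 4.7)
The plan is to verify the equality $f^*(A \meet B) = f^*(A) \meet f^*(B)$ for any $A, B \in \rdp(E')$ directly, using the explicit characterization of meets given in Lemma \ref{lem characterization of meets in DSC}. Note that $f^*$ already lands in $\rdp(E)$ because $f$ is a morphism (Lemma \ref{lem map iff preserves reachables}), and $f^*$ preserves set-theoretic intersections in general.

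For the inclusion $f^*(A \meet B) \subseteq f^*(A) \meet f^*(B)$, I would note that $A \meet B \subseteq A \cap B$ implies $f^*(A \meet B) \subseteq f^*(A) \cap f^*(B)$, and that $f^*(A \meet B)$ is complete (hence reachable) because $f$ is a morphism. Since $A \meet B \subseteq A$ and $A \meet B \subseteq B$ give $f^*(A\meet B) \subseteq f^*(A), f^*(B)$, this reachable event set is a lower bound of $f^*(A)$ and $f^*(B)$, hence contained in their meet. This direction in fact only uses the morphism hypothesis.

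For the reverse inclusion $f^*(A) \meet f^*(B) \subseteq f^*(A \meet B)$, I take any $x \in f^*(A) \meet f^*(B)$. By Lemma \ref{lem characterization of meets in DSC}, $x \in f^*(A) \cap f^*(B) = f^*(A \cap B)$ and there exists $D^x \in \dep(x)$ with $D^x \subseteq f^*(A) \cap f^*(B) = f^*(A \cap B)$. The comorphism hypothesis now produces a $D^{f(x)} \in \dep(f(x))$ such that $D^{f(x)} \subseteq f_*(D^x \cup x)$. Since $f_*(D^x) \subseteq f_*(f^*(A \cap B)) \subseteq A \cap B$ and $f(x) \in A \cap B$, we conclude $D^{f(x)} \subseteq A \cap B$. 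Applying Lemma \ref{lem characterization of meets in DSC} once more yields $f(x) \in A \meet B$, i.e.\ $x \in f^*(A \meet B)$.

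The only step requiring real care is the reverse inclusion, where the comorphism hypothesis is essential: without it, one cannot pass from a depset $D^x$ of $x$ inside $f^*(A \cap B)$ to a depset of $f(x)$ inside $A \cap B$, and the meet-characterization lemma would fail to apply. Everything else is formal manipulation of the standard identities $f^*(S \cap T) = f^*(S) \cap f^*(T)$ and $f_*(f^*(S)) \subseteq S$.
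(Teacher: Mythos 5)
Your proof is correct, and the forward inclusion is identical to the paper's (both observe that $f^*(A \meet B)$ is a reachable lower bound of $f^*(A)$ and $f^*(B)$, hence lies below their meet). Where you diverge is the reverse inclusion. The paper argues at the level of whole sets: it shows that $f^*(A \meet B)$ is in fact the \emph{largest} reachable subset of $f^*(A \cap B)$, by taking an arbitrary reachable $X \subseteq f^*(A \cap B)$, pushing it forward to get a reachable $f_*(X) \subseteq A \cap B$ (this is where the comorphism enters, via Lemma \ref{lem co-map iff preserves reachables}), concluding $f_*(X) \subseteq A \meet B$, and then using the Galois-connection inequality $X \subseteq f^*f_*(X)$. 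You instead argue elementwise: you take $x$ in the meet, extract a depset $D^x \subseteq f^*(A)\cap f^*(B)$ from the characterization in Lemma \ref{lem characterization of meets in DSC}, and use the raw Definition \ref{def comorphism} to manufacture a depset of $f(x)$ inside $A \cap B$, which puts $f(x)$ in $A \meet B$. Both arguments are sound and of comparable length; the paper's version isolates a slightly stronger and reusable fact (the identification of $f^*(A\meet B)$ with the largest reachable subset of $f^*(A\cap B)$) and leans on the already-proved reachability-preservation lemmas, while yours stays closer to the definitions and makes visible exactly which depset witnesses membership in the meet. Your closing remark about where the comorphism hypothesis is indispensable is accurate and matches the paper's use of it.
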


\begin{proof}
We want to show that $f^*(A \meet B) = f^*(A) \meet f^*(B)$.
Since $A \meet B \subseteq A \cap B$, we have $f^*(A \meet B) \subseteq f^*(A \cap B) = f^*(A) \cap f^*(B)$. But $f^*(A \meet B)$ is complete since $f$ is a morphism of DSCs, and $f^*(A) \meet f^*(B)$ is the largest complete subset of $f^*(A) \cap f^*(B)$, so $f^*(A \meet B) \subseteq f^*(A) \meet f^*(B)$.

Let us show that $f^*(A \meet B)$ is the largest complete subset of $f^*(A \cap B)$, as this will imply the result. Suppose that $X \subseteq f^*(A \cap B)$ is complete. Then $f_*(X) \subseteq f_* f^*(A \cap B) \subseteq A \cap B$. Since $f$ is a comorphism, $f_*(X)$ is complete, so $f_*(X) \subseteq A \meet B$. Thus $X \subseteq f^* f_*(X) \subseteq f^*(A \meet B)$. Thus $f^*(A \meet B)$ is the largest complete subset of $f^*(A \cap B)$.
\end{proof}

\begin{Def} \label{def bi-morphism of DSCs}
We say that $f: (E, \text{dep}) \to (E', \dep')$ is a \textbf{bimorphism} of DSCs if it is both a morphism and comorphism of DSCs. Let $\cat{PreDSC}_{\text{bi}}$ and $\cat{DSC}_{\text{bi}}$ denote the categories of preDSCs and DSCs with bimorphisms respectively.
\end{Def}

\begin{Cor} \label{cor bimorphisms give maps of lattices under rdp}
Given a morphism of DSCs $f: (E, \dep) \to (E', \dep')$, if $f$ is further a bimorphism, then $\rdp(f): \rdp(E') \to \rdp(E)$ is a morphism of finite lattices. Furthermore, the rdp construction extends to a functor $\rdp: \cat{DSC}_{\text{bi}}^\op \to \cat{FinLatt}$.
\end{Cor}

\begin{Rem} \label{rem morphisms of DSCs}
We have chosen morphisms of DSCs to be compatible with $f^*$ rather than $f_*$ for several reasons. One reason is that Lemma \ref{lem morphism comorphism} demonstrates that $f^*$ preserves meets when $f$ is a bimorphism, while the same does not hold for $f_*$ (this is used later in Section \ref{section bruns-lakser}). Further, this choice establishes a connection to notions of morphisms of greedoids and similar structures (including continuous functions on topological spaces), which are used in connection with antimatroids (see Remark \ref{rem morphisms of antimatroids}).
\end{Rem}

\section{Antimatroids and DSCs} \label{section antimatroids and dscs}

In this section we will establish an isomorphism between the category of antimatroids and the category of DSCs. Further, we will use this to characterize the essential image of the functor $\rdp: \cat{DSC}^\op \to \cat{FinJSLatt}$ as finite diamond-free semimodular lattices.

\subsection{Antimatroids}

Antimatroids are a mathematical construction notable for their frequent rediscovery, as chronicled in \cite{monjardet1985use}. Our work here is another instance of that rediscovery. First studied by Dilworth in 1940 \cite{dil1940}, these arise in characterizing greedy algorithms \cite{boyd1990algorithmic}, combinatorics of convex spaces \cite{Czedli_2014}, and artificial intelligence planning \cite{parmar2003some}, among other applications. 

\begin{Def} \label{def antimatroid}
An \textbf{antimatroid} is a pair $(E, \mathcal{F})$, where $E$ is a finite set and $\mathcal{F} \subseteq P(E)$ is a nonempty family of subsets, called the \textbf{feasible subsets} of $E$, such that:
\begin{enumerate}
    \item[(A1)] if $A, B \in \mathcal{F}$, then $A \cup B \in \mathcal{F}$,
    \item[(A2)] if $S \neq \emptyset$ and $S \in \mathcal{F}$, then there exists an element $x \in S$ such that $(S \setminus x) \in \mathcal{F}$, and
    \item[(A3)] $E = \bigcup_{A \in \mathcal{F}} A$.
\end{enumerate}
\end{Def}

\begin{Rem}
Condition (A3) in Definition \ref{def antimatroid} is sometimes omitted in the literature, and antimatroids that satisfy (A3) are called shelling structures in \cite{korte1985shelling}. In \cite{Czedli_2014} however they are just referred to as antimatroids. It makes little difference in the theory as one could always just consider the resulting antimatroid $(\bigcup_{A \in \mathcal{F}} A, \mathcal{F})$. 
\end{Rem}

\begin{Ex} \label{ex maximal antimatroid}
If $E$ is a finite set, then the pair $(E, P(E))$ is an antimatroid, which we call the \textbf{maximal antimatroid} of $E$.
\end{Ex}

\begin{Def}
Let $(E, \mathcal{F})$ and $(E', \mathcal{F}')$ be antimatroids and $f: E \to E'$ a set function such that if $A \in \mathcal{F}'$, then $f^*(A) \in \mathcal{F}$. We call $f$ a \textbf{morphism of antimatroids} and denote it by $f: (E, \mathcal{F}) \to (E', \mathcal{F}')$. Let $\cat{AntiMat}$ denote the category of antimatroids.
\end{Def}

\begin{Rem} \label{rem morphisms of antimatroids}
There are several notions of morphisms of antimatroids in the literature, with most sources not defining morphisms at all. The above definition agrees with a restriction of the definition of morphism of greedoids to antimatroids as given in \cite[Definition 4]{xiu2015categories}. Theorem \ref{th DSCs iso to antimatroids} will justify this choice of morphism of antimatroids.
\end{Rem}

\begin{Def} \label{def downset functor}
Given a poset $P$, a subset $A \subseteq P$ is called a \textbf{downset} if for every $x,y \in P$ such that $y \leq x$ and $x \in A$, we have that $y \in A$. In particular, let $(\downarrow x) = \{ y \in P \, : \, y \leq x \}$. Let $\mathcal{O}(P)$ denote the set of downsets of $P$, equipped with the partial order of inclusion of subsets.
\end{Def}

\begin{Rem}
If $P$ is a finite poset, then $\mathcal{O}(P)$ is a finite distributive lattice \cite[Section 3.4]{stanley2011enumerative}. In fact, every finite distributive lattice is isomorphic to the lattice of downsets of some (essentially unique) finite poset \cite[Theorem 3.4.1]{stanley2011enumerative}.
\end{Rem}

\begin{Rem} \label{rem downset remark}
The downsets construction extends to a functor $\mathcal{O} : \cat{FinPos}^\op \to \cat{FinDLatt}$, by defining $\mathcal{O}(f) : \mathcal{O}(P') \to \mathcal{O}(P)$ by $X \mapsto f^*(X)$, for a morphism $f: P \to P'$ of posets. Furthermore for every finite poset $P$, there exists a morphism $\downarrow_P : P \to \mathcal{O}(P)$ that sends $x$ to the downset $(\downarrow x)$.
\end{Rem}

\begin{Ex} \label{ex poset antimatroids}
If $P$ is a finite poset, then the pair $(P, \mathcal{O}(P))$ is an antimatroid. We call antimatroids of this form \textbf{poset antimatroids}. These are a well-studied class of antimatroids, and are the unique such class whose feasible sets are closed under intersections \cite{kempner2013geometry}. We will consider the connection between certain kinds of DSCs and this class of antimatroids in Section \ref{section dsncs}.
\end{Ex}

\begin{Lemma}
The construction of poset antimatroids forms a functor $D: \cat{FinPos} \to \cat{AntiMat}$.
\end{Lemma}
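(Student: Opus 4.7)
The plan is to define $D$ to act as the identity on underlying set functions: given an order-preserving map $f : P \to P'$ of finite posets, I would set $D(f)$ to be $f$ itself, now regarded as a set function between the underlying sets of the antimatroids $(P, \mathcal{O}(P))$ and $(P', \mathcal{O}(P'))$. Since Example \ref{ex poset antimatroids} already establishes that each $(P, \mathcal{O}(P))$ is an antimatroid, the only genuine content is to check that $f$ qualifies as a morphism of antimatroids in the sense of the preceding definition — i.e.\ that $f^* : P(P') \to P(P)$ pulls feasible sets (downsets of $P'$) back to feasible sets (downsets of $P$).

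The first step would be to verify this single claim: for any $A \in \mathcal{O}(P')$, the preimage $f^*(A)$ lies in $\mathcal{O}(P)$. This is a direct one-line argument from monotonicity: take $x \in f^*(A)$ and $y \leq x$ in $P$; then $f(y) \leq f(x) \in A$, and because $A$ is downward closed, $f(y) \in A$, so $y \in f^*(A)$. Hence $f^*(A)$ is a downset of $P$, and $D(f) : (P, \mathcal{O}(P)) \to (P', \mathcal{O}(P'))$ is a well-defined morphism of antimatroids.

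Once this is in hand, functoriality is purely formal. I would conclude by invoking the standard equalities $(g \circ f)^* = f^* \circ g^*$ and $(\text{id}_P)^* = \text{id}_{P(P)}$ for preimages of set functions, which immediately give $D(g \circ f) = D(g) \circ D(f)$ and $D(\text{id}_P) = \text{id}_{D(P)}$, so that $D$ preserves composition and identities.

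There is no substantive obstacle here; the lemma is essentially a bookkeeping statement. The only point worth flagging is the variance: morphisms of antimatroids are defined as set functions (with the feasibility condition expressed via $f^*$), so $\cat{AntiMat}$ is covariant in $f$, and $D$ is therefore genuinely a covariant functor, consistent with the stated signature $D : \cat{FinPos} \to \cat{AntiMat}$ (in contrast to the contravariant $\mathcal{O} : \cat{FinPos} \to \cat{FinDLatt}^{op}$ introduced in Definition \ref{def downset functor}, which packages the same underlying preimage data).
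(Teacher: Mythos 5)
Your proposal is correct and matches the paper's proof essentially verbatim: both reduce the lemma to the single check that the preimage of a downset under a monotone map is a downset, via the same one-line monotonicity argument. The extra remarks on composition, identities, and variance are fine but add nothing beyond what the paper leaves implicit.
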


\begin{proof}
If $f: P \to P'$ is an order preserving morphism of finite posets, then let us show that $D(f): (P, \mathcal{O}(P)) \to (P', \mathcal{O}(P'))$ is a morphism of antimatroids. If $V$ is a downset of $P'$, we wish to show that $f^*(V)$ is a downset of $P$. Suppose that $x \in f^*(V)$ and $y \in P$ such that $y \leq x$. Then since $f$ is order-preserving, $f(y) \leq f(x)$, and since $V$ is a downset, $f(y) \in V$, thus $y \in f^*(V)$. Thus $f^*(V)$ is a downset of $P$. 
\end{proof}

\begin{Lemma} \label{lem posets equiv to poset antimatroids}
The functor $D: \cat{FinPos} \to \cat{AntiMat}$ is fully faithful.
\end{Lemma}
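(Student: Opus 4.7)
The plan is to verify faithfulness and fullness separately, both of which reduce to elementary manipulations with principal downsets.

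For faithfulness, I observe that $D$ is the identity on underlying set functions: given $f : P \to P'$, the morphism $D(f)$ is literally the map $f : P \to P'$, viewed as a set function between the underlying sets of the antimatroids. Hence if $D(f) = D(g)$ as morphisms of antimatroids, then $f = g$ as set functions between posets, so $f = g$ as morphisms in $\cat{FinPos}$. No real work is needed here.

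For fullness, I start with an arbitrary morphism of antimatroids $\varphi : (P, \mathcal{O}(P)) \to (P', \mathcal{O}(P'))$, which by definition is a set function $\varphi : P \to P'$ such that $\varphi^*(V) \in \mathcal{O}(P)$ for every downset $V \in \mathcal{O}(P')$. It suffices to show that $\varphi$ is order-preserving, for then $\varphi$ is a morphism in $\cat{FinPos}$ and clearly $D(\varphi) = \varphi$. The key trick is to probe with principal downsets. Fix $y \leq x$ in $P$ and consider the principal downset $V = (\downarrow \varphi(x)) \in \mathcal{O}(P')$. By hypothesis $\varphi^*(V)$ lies in $\mathcal{O}(P)$, i.e.\ is a downset of $P$. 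Since $\varphi(x) \in V$, we have $x \in \varphi^*(V)$; since $y \leq x$ and $\varphi^*(V)$ is a downset, it follows that $y \in \varphi^*(V)$, which unwinds to $\varphi(y) \leq \varphi(x)$.

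I do not anticipate any real obstacle: the argument is just the standard ``test with principal downsets'' technique that recovers the order from its Alexandrov topology, transported to the finite antimatroid setting. The only thing to double-check is that $(\downarrow \varphi(x))$ is indeed a downset of $P'$ (immediate from the definition) and that the preimage condition we invoke is exactly the defining property of a morphism of antimatroids (which it is, using $\mathcal{F}' = \mathcal{O}(P')$ in Definition~\ref{def antimatroid}). Together, faithfulness and fullness give the claim.
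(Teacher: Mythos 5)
Your proof is correct and follows essentially the same route as the paper: faithfulness is immediate because $D$ acts as the identity on underlying functions, and fullness is obtained by testing the preimage condition against the principal downset $(\downarrow \varphi(x))$, which is a downset containing $x$ and hence $y$, yielding $\varphi(y) \leq \varphi(x)$. No gaps.
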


\begin{proof}
It is easy to see that if $P,P' \in \cat{FinPos}$, then the canonical function $D: \cat{FinPos}(P,P') \to \cat{AntiMat}((P,\mathcal{O}(P)), (P', \mathcal{O}(P')))$ is injective, since $D(f)$ is just $f$ on the underlying sets. We wish to show that it is also surjective. So suppose that $f: P \to P'$ is a function such that it pulls downsets of $P'$ back to downsets of $P$. We wish to show that it is also order preserving. Suppose that $y \leq x$ in $P$. Then $y \in (\downarrow x)$, and $f^*(\downarrow f(x))$ is a downset containing $x$, therefore $(\downarrow x) \subseteq f^*(\downarrow f(x))$. Thus $f(y) \in (\downarrow f(x))$, so $f(y) \leq f(x)$. Thus $D$ is fully faithful.
\end{proof}

\subsection{Correspondence of Antimatroids and DSCs} In this section we prove one of the main results of this paper, Theorem \ref{th equiv DSCs antimatroids}, which shows that the category of DSCs and the category of antimatroids are isomorphic.

\begin{Def}
Given a preDSC $(E, \dep)$, let $\Phi(E, \dep) = (E, \rdp(E))$ denote the set $E$ equipped with the family of reachable subsets $\rdp(E)$.
\end{Def}

\begin{Prop} \label{prop rdp of a dsc is an antimatroid}
If $(E, \dep)$ is a DSC, then $\Phi(E,\dep) = (E, \rdp(E))$ is an antimatroid. Furthermore a subset $X \subseteq E$ is complete in $(E, \dep)$ if and only if $X$ is feasible in $(E, \rdp(E))$.
\end{Prop}

\begin{proof}
By Lemma \ref{lem reachable sets closed under union}, complete subsets are closed under unions, proving condition (A1) of Definition \ref{def antimatroid}. Proposition \ref{prop reachability equiv to complete} and Lemma \ref{lem can remove elements from complete subsets} prove (A2). Since $E$ is complete, this proves (A3). 
\end{proof}

The construction of Proposition \ref{prop rdp of a dsc is an antimatroid} extends to a functor $\Phi: \cat{DSC} \to \cat{AntiMat}$, acting as the identity on morphisms. Let us find an inverse to this functor.

\begin{Def}
Let $(E, \mathcal{F})$ be an antimatroid, and $e \in E$. We say that a feasible subset $X \subseteq E$ is an \textbf{$e$-minimal feasible} subset if $e \in X$ and for all feasible subsets $Y$ such that $e \in Y$ and $Y \subseteq X$, it follows that $X = Y$. Let $\text{Min}_\mathcal{F}(e)$ denote the set of $e$-minimal feasible sets.
\end{Def}

\begin{Lemma}\label{lem feasible sets contain e-minimal feasible sets}
Suppose $(E, \mathcal{F})$ is an antimatroid and $X \in \mathcal{F}$ is a feasible set. If $X$ contains an element $e$, then there exists an $e$-minimal feasible set $M_e \subseteq X$.
\end{Lemma}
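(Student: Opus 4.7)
The plan is to mimic exactly the proof of Lemma \ref{lem complete event sets contain e-minimal complete event sets}, only replacing ``complete event set'' with ``feasible set'' throughout. The argument is a straightforward finite descent that does not actually invoke any of the antimatroid axioms (A1)--(A3); it uses only the definition of $e$-minimality and the finiteness of $E$.

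Concretely, suppose $X \in \mathcal{F}$ with $e \in X$. If $X$ is already $e$-minimal feasible, take $M_e = X$ and we are done. Otherwise, by the negation of the definition of $e$-minimal feasible set, there exists a feasible set $X_1 \subsetneq X$ with $e \in X_1$. Repeat: either $X_1$ is $e$-minimal feasible, in which case set $M_e = X_1$, or there exists a feasible $X_2 \subsetneq X_1$ with $e \in X_2$, and so on. This produces a strictly descending chain
$$X \supsetneq X_1 \supsetneq X_2 \supsetneq \cdots$$
of feasible subsets each containing $e$. Since $E$ is finite, $P(E)$ is finite, so this chain must terminate after finitely many steps at some $X_n$, which by construction is an $e$-minimal feasible set contained in $X$.

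There is no real obstacle here; the only thing worth flagging is that, unlike the DSC version where we needed Lemma \ref{lem can remove elements from complete event sets} to guarantee that completeness is preserved along descending chains, here feasibility of the $X_i$ is built directly into the hypothesis at each step (we \emph{chose} each $X_{i+1}$ to be feasible), so no extra axiom of antimatroids is required for the descent itself. The antimatroid axioms will become essential only when we later use this lemma to analyze the structure of $\mathcal{F}$ as a whole in the proof of Theorem \ref{th DSCs iso to antimatroids}.
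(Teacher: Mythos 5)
Your proof is correct and is essentially the same finite-descent argument the paper gives (the paper simply says the lemma "follows by essentially the same proof as" the DSC analogue, which is exactly what you carried out). One small quibble with your closing remark: the paper's proof of the DSC version also does not invoke Lemma \ref{lem can remove elements from complete event sets} --- it too just chooses a complete proper subset containing $e$ at each step --- so the two descents are even more parallel than you suggest.
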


\begin{Lemma}\label{lem e-minimal complete subsets are complete}
Suppose that $(E, \mathcal{F})$ is an antimatroid, $e \in E$, and $M_e$ is an $e$-minimal feasible set. If $x \in (M_e \setminus e)$, then there exists an $x$-minimal feasible set $M_x \subseteq (M_e \setminus e)$.
\end{Lemma}

\begin{proof}
Since $x \in M_e$, by Lemma \ref{lem feasible sets contain e-minimal feasible sets} there exists an $x$-minimal feasible set $M_x \subseteq M_e$. Suppose that $M_e$ contains no feasible strict subsets that contain $x$, so that $M_x = M_e$. By (A2) there exists a $y \in M_e$ such that $M_e \setminus y$ is feasible. If $y \neq x$, then $(M_e \setminus y) \subset M_x$, so $(M_e \setminus y)$ is a feasible set containing $x$, which is a contradiction. If $y = x$, then $(M_e \setminus x)$ is feasible, but $M_e$ is an $e$-minimal feasible set, so $(M_e \setminus x) = M_e$, which is also a contradiction. Thus $M_x \subset M_e$. If $M_x$ contains $e$, then since $M_e$ is an $e$-minimal feasible set, $M_x = M_e$, which is a contradiction. Thus $M_x \subseteq (M_e \setminus e)$.
\end{proof}

Now suppose $(E, \mathcal{F})$ is an antimatroid. Consider the preDSC $(E, \dep_\mathcal{F})$, where $\dep_{\mathcal{F}}: E \to P(P(E))$ is defined by $\dep_{\mathcal{F}}(e) =  \{ (M_e \setminus e) \, : \, M_e \in \text{Min}_\mathcal{F}(e) \}$. If $(E, \dep)$ is a DSC, then using Proposition \ref{prop characterization of e-minimal complete subsets of a DSC} it is not hard to see that $(E, \dep) = \Psi \Phi(E, \dep)$.

\begin{Lemma} \label{lem from antimatroid to DSC}
Given an antimatroid $(E, \mathcal{F})$, the corresponding preDSC $\Psi(E, \mathcal{F}) = (E, \dep_{\mathcal{F}})$ is a DSC. Furthermore, a subset $X \subseteq E$ is complete in $\Psi(E, \mathcal{F})$ if and only if $X$ is a feasible set of $(E, \mathcal{F})$.
\end{Lemma}

\begin{proof}
We check the axioms of Definition \ref{def DSC}. If $X, Y \in \dep_{\mathcal{F}}(e)$, and $X \subseteq Y$, then since $X \cup e$ and $Y \cup e$ are $e$-minimal feasible sets, we have that $X = Y$, so $\Psi(E, \mathcal{F})$ is irredundant. If $X \in \dep_{\mathcal{F}}(e)$, then $X$ is of the form $(M_e \setminus e)$, therefore $e \notin X$, so $\Psi(E, \mathcal{F})$ is acyclic. By (A3), we know that every $e \in E$ must be contained in some feasible set, and therefore must be contained in some $e$-minimal feasible set by Lemma \ref{lem feasible sets contain e-minimal feasible sets}. Thus $\Psi(E, \mathcal{F})$ is nonsingular. If $x \in (M_e \setminus e)$, then by Lemma \ref{lem e-minimal complete subsets are complete}, there exists an $x$-minimal feasible set $M_x \subseteq (M_e \setminus e)$. Thus $(M_x \setminus x) \subseteq (M_e \setminus e)$. So $(M_e \setminus e)$ contains a depset of $x$. Thus $(M_e \setminus e)$ is complete, which implies that $\Psi(E, \mathcal{F})$ has complete depsets and therefore is a DSC.

Now if $X \in \mathcal{F}$ is feasible, then by Lemma \ref{lem feasible sets contain e-minimal feasible sets}, $X$ is a complete subset in $(E, \dep_\mathcal{F})$.

Conversely suppose that $X \subseteq E$ is complete. Then for each $x \in X$, there exists a depset $(M_x \setminus x) \subseteq X$. Of course this means $M_x \subseteq X$. Thus $\bigcup_{x \in X} M_x \subseteq X$ and $X \subseteq \bigcup_{x \in X} M_x$. Since unions of feasible sets are feasible, $X$ is feasible.
\end{proof}

Thus by Lemma \ref{lem from antimatroid to DSC}, $\Psi$ extends to a functor $\Psi: \cat{AntiMat} \to \cat{DSC}$, acting as the identity on morphisms. Furthermore if $(E, \mathcal{F})$ is an antimatroid, then $(E, \mathcal{F}) = \Psi \Phi(E, \mathcal{F})$. Thus we have proven the following result.

\begin{Th} \label{th equiv DSCs antimatroids}
The functors $\Phi$ and $\Psi$ above are inverse to each other, defining an isomorphism of categories
\begin{equation}
    \cat{DSC} \cong \cat{AntiMat}.
\end{equation}
\end{Th}

\subsection{Semimodular Lattices}
Here we review the definitions of modular and semimodular lattices and relate them to DSCs.

Given a lattice $L$ and elements $a,b \in L$, consider the sublattice $[a,b] = \{ x \in L \, : \, a \leq x \leq b \}$. Then there is an adjoint pair
$$ a \join (-) : [a \meet b, b] \rightleftarrows [a, a \join b]: (-) \meet b,$$
which is equivalent to saying that whenever $a \meet b \leq x \leq b$, then $x \leq (a \join x) \meet b$ and whenever $a \leq y \leq a \join b$, then $a \join (y \meet b) \leq y$.

\begin{Def}
We say that a lattice $L$ is \textbf{modular} if the above is an adjoint equivalence for all $a,b \in L$, i.e. if whenever $a \meet b \leq x \leq b$, $x = (a \join x) \meet b$ and whenever $a \leq y \leq a \join b$, it follows that $a \join (y \meet b) = y$.
\end{Def}

Now recall the pentagon lattice $N_5$ from (\ref{eqn pentagon lattice}).

\begin{Th}[{\cite[Theorem 4.10]{davey2002introduction}}]
A lattice $L$ is modular if and only if it does not contain $N_5$ as a sublattice.
\end{Th}

\begin{Def}
A lattice $L$ is said to be \textbf{upper semimodular} if whenever $a \meet b \prec a$, then $b \prec a \join b$, where $\prec$ is the cover relation (Definition \ref{def covering relation on a poset}. We say it is \textbf{lower semimodular} if whenever $b \prec a \join b$ then $a \meet b \prec a$.
\end{Def}

\begin{Rem}
If we refer to a lattice $L$ as \textbf{semimodular}, that means $L$ is upper semimodular. Note that a lattice is modular if and only if it is upper semimodular and lower semimodular.
\end{Rem}

\begin{Def}\label{def diamond free semimodular}
A lattice $L$ is a \textbf{diamond-free semimodular lattice} (also known as a \textbf{join-distributive lattice}) if it is semimodular and does not contain the diamond lattice $M_3$ as a sublattice.
\end{Def}

Let $(E, \mathcal{F})$ denote an antimatroid as in Definition \ref{def antimatroid}. Since $\mathcal{F} \subseteq P(E)$, it inherits a poset structure. Since $\mathcal{F}$ is closed under unions, $\mathcal{F}$ is actually a join-semilattice. Since it is finite and has a bottom element, one can also define meets as in (\ref{eqn defining meets in a join-semilattice}). In fact we have the following characterization:

\begin{Prop}[{\cite[Corollary 7.5.i]{Czedli_2014}}, originally {\cite[3.3]{edelman1980meet}}]
If $(E, \mathcal{F})$ is an antimatroid, then $(\mathcal{F}, \subseteq)$ is a diamond-free semimodular lattice.
\end{Prop}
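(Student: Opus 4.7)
The plan is to work directly from the antimatroid axioms. The paragraph preceding the statement already notes that $\mathcal{F}$ is a finite lattice with joins given by union and meets by $A \meet B = \bigcup\{Z \in \mathcal{F} : Z \subseteq A \cap B\}$, i.e.\ the largest feasible subset of $A \cap B$. Two conditions then remain to verify: upper semimodularity, and that no sublattice of $\mathcal{F}$ is isomorphic to the diamond $M_3$.

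The first lemma I would establish is a \emph{single-element exchange property}: if $X \subsetneq Y$ are both feasible, then there exists $y \in Y \setminus X$ with $X \cup \{y\} \in \mathcal{F}$. The idea is to apply (A2) iteratively to $Y$, producing a descending chain of feasibles from $Y$ down to $\emptyset$ that removes one element at a time, and then reverse it to obtain $\emptyset = \tilde Y_0 \subsetneq \cdots \subsetneq \tilde Y_n = Y$ with $|\tilde Y_i \setminus \tilde Y_{i-1}| = 1$. Taking the smallest index $i$ at which the newly added element falls outside $X$, all earlier additions lie in $X$, so $\tilde Y_{i-1} \subseteq X$ and $X \cup \tilde Y_i = X \cup \{y\}$ is feasible by (A1). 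An immediate corollary is that any cover $X \preceq Y$ in $\mathcal{F}$ satisfies $|Y \setminus X| = 1$, because otherwise the lemma would supply a strictly intermediate feasible. I expect this exchange lemma to be the crux; everything downstream is essentially set-theoretic bookkeeping relying on it together with the meet formula.

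For semimodularity, I would start from $A \meet B \preceq A$ and use the corollary to write $A = (A \meet B) \cup \{a\}$ for some $a \notin A \meet B$. This $a$ cannot lie in $B$, for otherwise $A = (A \meet B) \cup \{a\}$ would itself be a feasible subset of $A \cap B$ strictly larger than $A \meet B$, contradicting the meet formula. Using $A \meet B \subseteq B$, we then obtain $A \join B = A \cup B = B \cup \{a\}$, and no feasible can lie strictly between $B$ and $B \cup \{a\}$ by cardinality, so $B \preceq A \join B$ as required.

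For the $M_3$-free property, suppose for contradiction that $\{X, A, B, C, Y\}$ is an $M_3$-sublattice of $\mathcal{F}$, so that the pairwise meets of $A, B, C$ equal $X$ and the pairwise joins equal $Y$. Applying the exchange lemma to $X \subsetneq A$ produces $a \in A \setminus X$ with $X \cup \{a\}$ feasible. The same meet-formula argument as above shows $a \notin B$ (else $X \cup \{a\}$ would be a feasible subset of $A \cap B$ strictly exceeding $X = A \meet B$), and by symmetry $a \notin C$. But $a \in A \subseteq Y = B \join C = B \cup C$ forces $a \in B \cup C$, the desired contradiction.
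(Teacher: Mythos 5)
Your proof is correct, and it fills a gap the paper deliberately leaves open: the paper offers no argument for this proposition at all, simply citing Czedli and Edelman, so any comparison is between your self-contained proof and the literature. Your route is in fact the classical one. The single-element exchange lemma you isolate (for feasible $X \subsetneq Y$ there is $y \in Y \setminus X$ with $X \cup \{y\}$ feasible) is exactly the augmentation property that Edelman's and Czedli's treatments hinge on, and your derivation of it from (A2) and (A1) -- peel $Y$ down to $\varnothing$, reverse the chain, take the first added element outside $X$, and union with $X$ -- is sound; note only that the chain bottoms out at $\varnothing \in \mathcal{F}$, which does follow from (A2) together with the assumed nonemptiness of $\mathcal{F}$. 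The corollary that covers in $\mathcal{F}$ add exactly one element is right, the semimodularity argument correctly rules out $a \in B$ via the characterization of $A \meet B$ as the largest feasible subset of $A \cap B$ (if $a \in B$ then $A$ itself would be a feasible subset of $A \cap B$ properly containing $A \meet B$), and the $M_3$ exclusion cleanly exploits that joins are unions, so the element $a$ witnessing $X \prec X \cup \{a\} \subseteq A$ would have to land in $B \cup C = Y \supseteq A$ while being excluded from both $B$ and $C$ by the meet formula. Every step checks out; this is a complete elementary proof of a statement the paper only imports.
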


In fact \cite[Corollary 7.5]{Czedli_2014} proves that every diamond-free semimodular lattice is of the form $(\mathcal{F}, \subseteq)$ for some antimatroid $(E, \mathcal{F})$. 

\begin{Cor} \label{cor rdp is diamond-free semimodular}
If $(E, \dep)$ is a DSC, then $\rdp(E)$ is a finite diamond-free semimodular lattice. Conversely, every finite diamond-free semimodular lattice is of the form $\rdp(E)$ for some DSC $(E, \dep)$.
\end{Cor}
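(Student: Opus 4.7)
The plan is to obtain both directions by simply chaining together the isomorphism $\cat{DSC} \cong \cat{AntiMat}$ from Theorem \ref{th DSCs iso to antimatroids} with the cited result of Czedli characterizing diamond-free semimodular lattices as antimatroid lattices. There is essentially no new content to establish, only a bookkeeping of what the functors $\Phi$ and $\Psi$ send to what.

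For the forward direction, I would start with a DSC $(E,\dep)$ and pass to its associated antimatroid $\Phi(E,\dep) = (E, \rdp(E))$, which is an antimatroid by Proposition \ref{prop rdp of a dsc is an antimatroid}. The poset $(\rdp(E), \subseteq)$ is a finite join-semilattice by Corollary \ref{cor rdp is a join semilattice}, and in fact a finite lattice since, as noted around equation (\ref{eqn defining meets in a join-semilattice}), any finite join-semilattice with a bottom element has all meets. Applying the cited \cite[Corollary 7.5.i]{Czedli_2014} to the antimatroid $(E, \rdp(E))$ then yields that $\rdp(E)$ is a diamond-free semimodular lattice, finishing this direction.

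For the converse, I would start with a finite diamond-free semimodular lattice $L$. By the other half of \cite[Corollary 7.5]{Czedli_2014}, there exists an antimatroid $(E, \mathcal{F})$ such that $(\mathcal{F}, \subseteq) \cong L$ as lattices. Now apply the functor $\Psi : \cat{AntiMat} \to \cat{DSC}$ to obtain a DSC $\Psi(E, \mathcal{F}) = (E, \dep_\mathcal{F})$. The calculation carried out just before Theorem \ref{th DSCs iso to antimatroids} shows that $\Phi \circ \Psi$ is the identity on antimatroids, so $\rdp(E, \dep_\mathcal{F}) = \mathcal{F}$ as posets. Composing the two lattice isomorphisms gives $\rdp(E, \dep_\mathcal{F}) \cong L$, exhibiting $L$ as a reachable dependency poset.

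There is no real obstacle: the content has already been established in the isomorphism theorem and in Czedli's characterization, and the only thing to be careful about is to cite the correct halves of \cite[Corollary 7.5]{Czedli_2014} in each direction and to use that $\Phi$ and $\Psi$ are mutually inverse on objects, rather than any subtler statement about morphisms.
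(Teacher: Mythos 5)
Your proposal is correct and follows exactly the route the paper intends: the forward direction is Proposition \ref{prop rdp of a dsc is an antimatroid} combined with the cited half of Czedli's result, and the converse is Czedli's existence statement together with the identity $\rdp(E,\dep_{\mathcal{F}}) = \mathcal{F}$ established in the proof of Theorem \ref{th DSCs iso to antimatroids}. The paper leaves this corollary without an explicit proof precisely because it is this immediate chaining, so there is nothing to add.
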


In Section \ref{section dsncs} we will pinpoint those DSCs $E$ where $\rdp(E)$ is not only diamond-free semimodular but is also distributive.

\section{Dependency Structures with No Choice} \label{section dsncs}

In this section we turn to a special class of DSCs, namely those with a unique dependency set per event, i.e. those with no choice. We show that the essential image of $\rdp$ when restricted to DSNCs is equivalent to the opposite of the category of finite distributive lattices.

\begin{Def}
Suppose $(E, \dep)$ is a DSC. We say that $(E, \dep)$ is a \textbf{dependency structure with no choice} or DSNC if $\dep(e)$ is a singleton for every $e \in E$. We will abuse notation and write $\dep(e)$ to mean the unique dependency set it contains. Let $\cat{DSNC}$ denote the full subcategory of $\cat{DSC}$ whose objects are DSNCs.
\end{Def}

\begin{Lemma}
Let $(E, \dep)$ be a DSNC, with $e, e' \in E$. Then the following are equivalent
\begin{enumerate}
    \item $e' \in \dep(e)$,
    \item $e \leftarrow_m e'$,
    \item $e \leftarrow e'$.
\end{enumerate}
Furthermore if $e \leftarrow e'$ and $e' \leftarrow e''$, then $e \leftarrow e''$, i.e. the relation $\leftarrow$ is transitive.
\end{Lemma}

Given a DSNC $(E, \dep)$, let $\leftarrow^r$ denote the reflexive closure of the relation $\leftarrow$. Then $\leftarrow^r$ is still transitive, and it is also antisymmetric. Thus $\leftarrow^r$ defines a partial order on $(E, \dep)$. 

\begin{Lemma}
Given a DSNC $(E, \dep)$, let $R(E)$ denote the underlying set $E$ equipped with the partial order $\leftarrow^r$. Then $R$ extends to a functor $R: \cat{DSNC} \to \cat{FinPos}$.
\end{Lemma}

\begin{proof}
 If $f : (E, \dep) \to (E', \dep')$ is a morphism of DSNCs, then let $f: R(E) \to R(E')$ denote the same underlying function as $f$. We wish to show that if $a,b \in R(E')$ and $a \leftarrow^r b$ then $f(a) \leftarrow^r f(b)$. Since $f$ is a morphism of DSNCS, this implies that $b \in f^*(\dep(f(a)) \cup f(a))$. If $b \in f^*(\dep(f(a)))$, then $f(b) \in \dep(f(a))$, so $f(b) \leftarrow^r f(a)$.  If $b \in f^*(f(a))$, then $f(b) = f(a)$, so $f(b) \leftarrow^r f(a)$. 
\end{proof}

In what follows, if $(E, \dep)$ is a DSNC, then we will often denote the relation $\leftarrow^r$ on $E$ by $\leq$. 
 
\begin{Ex}
A simple example of a DSNC is $a.b$, the DSC with $E = \{ a, b \}$ and $\dep(a) = \{ \{ b \} \}$. Its corresponding poset is $R(E) = \{ b \leq a \}$.
\end{Ex}

The functor $R$ has a (strict) inverse $R^{-1}: \cat{FinPos} \to \cat{DSNC}$ defined as follows. If $P$ is a finite poset, then let $R^{-1}(P)$ denote the DSNC with same underlying set as $P$ and where $b \in \dep(a)$ if $b < a$. Morphisms of posets pullback downsets to downsets, thus by Lemma \ref{lem downset of a DSNC iff reachable}, letting $R^{-1}$ act on morphisms by the identity defines a morphism of DSNCs, thus $R^{-1}$ defines a functor which is a two-sided inverse to $R$.

\begin{Prop} \label{prop finpos and dsnc iso}
The functors $R$ and $R^{-1}$ defined above form an isomorphism of categories
\begin{equation}
    \cat{DSNC} \cong \cat{FinPos}.
\end{equation}
\end{Prop}

\begin{Lemma} \label{lem downset of a DSNC iff reachable}
Let $E$ be a DSNC, then a subset $X \subseteq E$ is complete if and only if $X$ is a downset in $R(E)$.
\end{Lemma}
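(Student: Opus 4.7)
The plan is to unpack both notions using the special structure of a DSNC and observe that they become literally the same condition. In a DSNC, since $\dep(e)$ is a singleton whose unique element I will also call $\dep(e)$, the completeness condition on $U$ simplifies to: for every $e \in U$, $\dep(e) \subseteq U$. Likewise, the order on $R(E)$ is the reflexive closure of the relation $b \in \dep(a)$, i.e.\ $b \leq a$ iff $a = b$ or $b \in \dep(a)$.

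For the forward direction, I would assume $U$ is complete and take $a \in U$ together with any $b \in R(E)$ with $b \leq a$. If $b = a$ there is nothing to do; otherwise $b \in \dep(a)$, and completeness of $U$ gives $\dep(a) \subseteq U$, hence $b \in U$. Thus $U$ is a downset in $R(E)$.

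For the backward direction, assume $U$ is a downset of $R(E)$ and take $a \in U$. For any $b \in \dep(a)$ one has $b \leq a$ by definition of the order on $R(E)$, so the downset property forces $b \in U$. Therefore $\dep(a) \subseteq U$, and since $a$ was arbitrary, $U$ is complete.

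I do not foresee any real obstacle: the only subtlety is remembering that in a DSNC the possible dependency set of $e$ is unique, so that the existential quantifier in the definition of completeness collapses to a universal statement about $\dep(e)$. With that identification, the two conditions become definitionally equivalent.
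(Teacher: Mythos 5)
Your proof is correct and follows essentially the same route as the paper's: both directions reduce to the observation that, for a DSNC, completeness of $U$ means $\dep(a) \subseteq U$ for each $a \in U$, which is definitionally the downset condition for the order $b \leq a \iff b \in \dep(a)$ (up to reflexivity). Your explicit handling of the reflexive case $b = a$ is a minor point the paper leaves implicit.
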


\begin{proof}
$(\Leftarrow)$ Let $x \in X$, and $y \in \dep(x)$. Then $y \leq x$ in $R(E)$ by definition. But $X$ is a downset, so $y \in X$. Thus $\dep(x) \subseteq X$, so $X$ is a complete subset.

$(\Rightarrow)$ Suppose that $X \subseteq E$ is complete, $x \in X$ and $y \leq x$ in $R(E)$. Then either $y = x$ or $y \in \dep(x)$, in either case $y \in X$ since $X$ is complete. Thus $X$ is a downset in $R(E)$.
\end{proof}

\begin{Prop}
The following diagram (strictly) commutes
\begin{equation} \label{eq diagram for DSNCs}
\begin{tikzcd}
	{\cat{DSNC}^\op} & {\cat{DSC}^\op} \\
	{\cat{FinPos}^\op} \\
	{\cat{FinDLatt}} & {\cat{FinJSLatt}}
	\arrow["{\cat{rdp}}", from=1-2, to=3-2]
	\arrow[hook, from=1-1, to=1-2]
	\arrow["R^\op"', from=1-1, to=2-1]
	\arrow["{\mathcal{O}}"', from=2-1, to=3-1]
	\arrow[hook, from=3-1, to=3-2]
\end{tikzcd}
\end{equation}
\end{Prop}

\begin{proof}
By Lemma \ref{lem downset of a DSNC iff reachable}, we know that objectwise the two functors are equal. Similarly if $f: (E, \dep) \to (E', \dep')$ is a morphism of DSNCs, then $\mathcal{O}R^\op(f) = f^* = \rdp(f)$. Thus the morphisms agree as well.
\end{proof}

\begin{Cor} \label{cor dsncs give dist lattices}
If $(E, \dep)$ is a DSNC, then $\rdp(E)$ is a finite distributive lattice.
\end{Cor}

The above Corollary can actually be strengthened as follows. Suppose that $(E, \dep)$ is a DSC that is not a DSNC. In this case $\rdp(E)$ cannot be a distributive lattice, for if $X, Y \in \dep(e)$ are distinct elements, then the following will have to be a sublattice of $\rdp(E)$.
\begin{equation*}
    \begin{tikzcd}
	& {e \cup X \cup Y} \\
	{e \cup X} & {X \cup Y} & {e \cup Y} \\
	X && Y \\
	& {X \meet Y}
	\arrow[no head, from=2-1, to=3-1]
	\arrow[no head, from=1-2, to=2-1]
	\arrow[no head, from=1-2, to=2-2]
	\arrow[no head, from=1-2, to=2-3]
	\arrow[no head, from=3-1, to=4-2]
	\arrow[no head, from=3-3, to=4-2]
	\arrow[no head, from=2-2, to=3-1]
	\arrow[no head, from=2-3, to=3-3]
	\arrow[no head, from=2-2, to=3-3]
\end{tikzcd}
\end{equation*}
This sublattice of $\rdp(E)$ contains two copies of the pentagon lattice $N_5$ as discussed in Example \ref{ex rdp of a depends on b or c is not distributive}. Thus we have proved the following result.

\begin{Prop}
Given a DSC $(E, \dep)$, then $\rdp(E)$ is a finite distributive lattice if and only if $(E,\dep)$ is a DSNC.
\end{Prop}

\section{Event Structures and DSCs} \label{section event structures}

In this section, we explore a connection between DSCs and event structures, a mathematical model of concurrent computation, first defined by Winskel in his thesis \cite{Winskel1980EventsIC}. There are several different notions of event structure given in the thesis\footnote{Corresponding to elementary event structures \cite[Def. 4.1.1]{Winskel1980EventsIC} which are just posets, what are called prime event structures in \cite[Def. 1.3.4]{winskel1987event} as \cite[Def. 4.2.11]{Winskel1980EventsIC}, and \cite[Def. 3.3.1]{Winskel1980EventsIC} which are a less refined form of general event structure.} and in the concurrency literature more broadly. Here we use the following definition, which refers to what are also called general event structures, as in \cite[Section 2.2]{de2019causal}.

\begin{Def}[{\cite[Definition 1.1.1]{winskel1987event}}]
An \textbf{event structure} is a triple $(E, \Con, \vdash )$ where:
\begin{enumerate}
    \item $E$ is a set, whose elements we call \textbf{events},
    \item $\Con \subseteq P(E)$, is a nonempty collection of finite subsets called the \textbf{consistency predicate} such that if $Y \in \Con$ and $X \subseteq Y$, then $X \in \Con$, and
    \item $\vdash \, \subseteq \Con \times E$ is a relation called the \textbf{enabling relation}, which satisfies the following property: if $X \vdash e$, $X \subseteq Y$, and $Y \in \Con$, then $Y \vdash e$.
\end{enumerate}
For our purposes, we will always assume that $E$ is finite. A \textbf{conflict-free event structure} or \textbf{CFES} is an event structure $(E, \Con, \vdash)$ where $E$ is finite and $\Con = P(E)$.
\end{Def}

\begin{Rem}
Recall that relations $R \subseteq P(E) \times E$ are equivalent to functions $R' : E \to P(P(E))$. Thus the enabling relation of a CFES $(E, \Con, \vdash)$ is equivalently a function which we denote $\text{en}: E \to P(P(E))$. We will denote a CFES by $(E, \text{en})$ in what follows. 
\end{Rem}

\begin{Def}[{\cite[Definition 1.1.2]{winskel1987event}}]
A \textbf{configuration} $X$ of a CFES $(E, \en)$ is a subset $X \subseteq E$ such that for every $e \in X$, there exists a finite sequence $e_0, \dots, e_n \in C$ such that $e_n = e$ and for all $i \leq n$, $\{e_0, \dots, e_{i-1} \} \in \en(e_i)$. Let $\mathcal{F}(E)$ denote the poset of configurations of $(E, \en)$ ordered by subset inclusion, which is called the \textbf{family of configurations} of $(E, \en)$.
\end{Def}

\begin{Def}
A \textbf{morphism} of CFESs $f: (E, \en) \to (E', \en')$ is a function $f: E \to E'$ of the underlying sets, such that if $X \subseteq E'$ is a configuration, then $f^* X$ is a configuration of $E$. Let $\cat{CFES}$ denote the category of CFESs. 
\end{Def}

Clearly the family of configurations construction defines a functor $\mathcal{F}: \cat{CFES}^\op \to \cat{FinPos}.$

\begin{Def} \label{def comorphism of CFES}
A \textbf{comorphism} of CFESs $f: (E, \en) \to (E', \en')$ is a function $f: E \to E'$ of the underlying sets, such that if $C \subseteq E$ is a configuration, then $f_* C$ is a configuration of $E'$. Let $\cat{CFES}_{\text{co}}$ and $\cat{DES}_{\text{co}}$ denote the categories of CFESs and dependency event structures with comorphisms respectively. 
\end{Def}

\begin{Rem}
The comorphisms of Definitions \ref{def comorphism of CFES} are almost exactly (restrictions of) the morphisms of event structures as given in \cite[Section 2]{winskel1999event}. However there, it is also required that the underlying set function $f$ is injective. The morphisms of event structures in \cite[Section 2.1]{winskel1987event}, \cite[Section 1.4]{joyal1996bisimulation} and \cite[Definition 2.24]{graversen2019towards} are similar except that $f$ can also be a partial function. 

Note that the event structures which the morphisms in the above-cited work are defined on are different than the ones we study here -- on the one hand allowing conflict, and on the other more restricted in terms of choice (either ``stable" or ``prime").

We have have focused on morphisms that ``pull back'' configurations in our work, while much of the literature on event structures has made the opposite choice, in order to connect them with DSCs, see Remark \ref{rem morphisms of DSCs}.
\end{Rem}

\begin{Def}
Given a CFES $(E, \en)$, let $\alpha(E, \en)$ denote the preDSC given by $(E, \en_0)$, which we call the \textbf{minimal enabling preDSC} of $(E, \en)$.
\end{Def}

\begin{Lemma} \label{lem preDSC from CFES}
Given a CFES $(E, \en)$, the preDSC $\alpha(E, \en) = (E, \en_0)$ is irredundant. Furthermore, a subset $X \subseteq E$ is a configuration in $(E, \en)$ if and only if it is reachable in $\alpha(E, \en)$.
\end{Lemma}

\begin{proof}
Let $e \in E$, and suppose that $X, Y \in \en_0(e)$. If $Y \subseteq X$, then $X = Y$, by the definition of minimal enabling. Thus $(E, \en_0)$ is irredundant.
Now suppose that $X \subseteq E$ is a configuration in $(E, \en)$. Then for every $e \in X$, there exists a set $\{e_0, \dots, e_n \} \subseteq X$ such that $e_n = e$, $\varnothing \in \en_0(e_0)$ and $\{e_0, \dots, e_{i-1} \} \in \en(e_i)$ for every $1 \leq i \leq n$. Thus in $\alpha(E, \en)$ we have 
\begin{equation}
\varnothing \rch \{ e_0 \} \rch \{ e_0, e_1 \} \dots \rch \dots \{e_0, \dots, e_{n-1}, e \},
\end{equation}
because for each $e_k \in \{e_0, \dots, e_i \}$, $e_k$ is enabled by some subset of $\{e_0, \dots, e_i \}$, which means it is minimally enabled by some subset, and therefore has a depset contained in the subset.

Now suppose that $X$ is reachable in $\alpha(E, \en)$ and $e \in X$. Since $X$ is reachable, there exists sets $X_0, \dots, X_n$ with $X_n = X$ such that $\varnothing \rch X_0$ and $X_{i-1} \rch X_i$ for all $1 \leq i \leq n$. Now consider the union $Y = \bigcup_{0 \leq i \leq n -1} X_i$. Choose an ordering $\{ e_0^i, \dots, e_{m_i}^i \}$ of $X_i$ and order $Y$ as $\{e_0^0, \dots, e_{m_0}^0, e_0^1, \dots, e_{m_1}^1, \dots, e_0^{n-1}, \dots, e_{m_{n-1}}^{n-1} \}$. Relabel this sequence as $\{y_0, \dots, y_{k-1} \}$ and let $y_k = e$. Then $\{y_0, \dots, y_k \}$ forms a sequence such that $\varnothing \in \en_0(y_0)$, $y_k = e$ and $\{y_0, \dots, y_{i-1} \} \in \en(y_i)$ for all $1 \leq i \leq k$. Thus $X$ is a configuration in $(E, \en)$.
\end{proof}

By Lemma \ref{lem preDSC from CFES}, the construction $\alpha(E, \en)$ extends to a functor $\alpha: \cat{CFES} \to \cat{iPreDSC}$, with $\alpha$ acting as the identity on morphisms, where $\cat{iPreDSC}$ denotes the full subcategory of $\cat{PreDSC}$ on the irredundant preDSCs.

Given an irredundant preDSC $(E, \dep)$, let $\beta(E, \dep)$ denote the CFES $(E, \en)$ with $\en : E \to P(P(E))$ defined for $e \in E$ by
\begin{equation}
    \en(e) = \{ X \subseteq E \, : \, \exists Y \subseteq X, \, Y \in \text{dep}(e) \}.
\end{equation}
It is easy to see that $(E, \dep) = \alpha \beta(E, \dep)$, which implies by Lemma \ref{lem preDSC from CFES} that if $X \subseteq E$, then $X$ is reachable in $(E, \dep)$ if and only if it is a configuration in $\beta(E, \en)$. Thus $\beta$ extends to a functor $\beta: \cat{iPreDSC} \to \cat{CFES}$, acting as the identity on morphisms, and furthermore we have that if $(E, \en)$ is a CFES, then $\beta \alpha(E, \en) = (E, \en)$.  Thus we have proven the following.

\begin{Prop} \label{prop cfes equiv to ipredsc}
The functors $\alpha$ and $\beta$ above define an isomorphism of categories 
\begin{equation*}
\cat{CFES} \cong \cat{iPreDSC},
\end{equation*}
such that the following diagram strictly commutes
\begin{equation}
\begin{tikzcd}
	{\cat{CFES}^\op} && {\cat{iPreDSC}^\op} \\
	& {\cat{FinPos}}
	\arrow["{\alpha^\op}"', curve={height=6pt}, from=1-1, to=1-3]
	\arrow["{\mathcal{F}}"', from=1-1, to=2-2]
	\arrow["\rdp", from=1-3, to=2-2]
	\arrow["{\beta^{\op}}"', curve={height=6pt}, from=1-3, to=1-1]
\end{tikzcd}
\end{equation}
\end{Prop}

\begin{Def}
Given a CFES $(E, \text{en})$, with $e \in E$, we say that $X \subseteq E$ is a \textbf{minimal enabling set} for $e$ if $X \in \en(e)$ and for all $Y \subseteq X$ if $Y \in \en(e)$, then $X = Y$. We write $X \in \en_0(e)$ to mean that $X$ is a minimal enabling set for $e$. This defines a relation $\en_0 : E \to P(P(E))$, which we call the minimal enabling relation. We say that a CFES $(E, \en)$ is \textbf{acyclic} if for any $X \subseteq E$ such that $X \in \en_0(e)$, then $e \notin X$. We say that $(E, \en)$ is \textbf{nonsingular} if for every $e \in E$, there exists a subset $X \subseteq E$ such that $X \in \en(e)$. We say that $(E, \en)$ has \textbf{configurable minimal enabling sets} if for every $e \in E$ and every $X \in \en_0(e)$, $X$ is a configuration. We say that a CFES $(E, \en)$ is a \textbf{dependency event structure} if it is acyclic, nonsingular and has configurable minimal enabling sets.
\end{Def}

Let $\cat{DES}$ denote the full subcategory of $\cat{CFES}$ on the dependency event structures.

\begin{Lemma} \label{lem DES to DSC}
If $(E, \en)$ is a dependency event structure, then $\alpha(E, \en)$ is a DSC. Conversely if $(E', \dep)$ is DSC, then $\beta(E', \dep)$ is a dependency event structure.
\end{Lemma}

\begin{proof}
By Lemma \ref{lem preDSC from CFES}, $\alpha(E, \en)$ is irredundant. Acyclicity and nonsingularity of $(E, \en)$ translate straightforwardly to acyclicity and nonsingularity of $\alpha(E, \en)$ as a preDSC. By Lemma \ref{lem preDSC from CFES}, a subset is reachable if and only if it is a configuration, thus the minimal enabling sets being configurations implies that all depsets are reachable and therefore complete.

Conversely if $(E, \dep)$ is a DSC, then $\beta(E, \dep)$ is clearly acyclic and nonsingular, and subsets of $E$ are configurations in $\beta(E, \dep)$ if and only if they are complete in $(E, \dep)$, thus all minimal enabling sets are configurations, so $\beta(E, \dep)$ is a dependency event structure.
\end{proof}

\begin{Th} \label{th equiv DSC and DES}
The functors $\alpha$ and $\beta$ above restrict to an isomorphism of categories
\begin{equation}
    \cat{DES} \cong \cat{DSC}.
\end{equation}
\end{Th}

Combining Theorem \ref{th equiv DSC and DES} and Corollary \ref{cor rdp is diamond-free semimodular} gives the following result.

\begin{Cor}
If $(E, \en)$ is a dependency event structure, then $\mathcal{F}(E)$ is a finite diamond-free semimodular lattice.
\end{Cor}

In fact, by the following result, we have characterized the family of configurations of all conflict-free event structures.

\begin{Prop} \label{prop family of configs}
For every CFES $(E, \en)$, there exists a dependency event structure $(\widetilde{E}, \widetilde{\en})$ such that $\mathcal{F}(E) \cong \mathcal{F}(\widetilde{E})$.
\end{Prop}

\begin{proof}
Given the CFES $(E, \en)$, let $(E^1, \en^1)$ denote the CFES with $E^1 = E$ and $\en^1(e) = \{ X \in \en(e) \, : \, X \text{ is a configuration} \}$. If $X \subseteq E$ is a configuration in $(E, \en)$, then for every $e \in X$ there exists a sequence of elements $e_0, \dots, e_n \in X$ such that $\{e_0, \dots, e_{i-1} \} \in \en(e_i)$. This means that every set $\{e_0, \dots, e_{i-1} \}$ is a configuration. Thus $X$ is a configuration in $(E^1, \en^1)$. Conversely if $X$ is a configuration in $(E^1, \en^1)$, then it is certainly a configuration in $(E, \en)$. Thus $\mathcal{F}(E) = \mathcal{F}(E^1)$. Note that $(E^1, \en^1)$ is also acyclic. Indeed if $X \subseteq E^1$ is an enabling set for $e \in (E^1, \en^1)$ that contains $e$, then it is a configuration, so there exists a proper subset of $X$ that does not include $e$ that enables $e$, so $X$ is not minimal. 

Given a CFES $(E, \en)$, let $\text{Sing}(E) \subseteq E$ denote those elements of $E$ that are not enabled by any subsets, i.e. those $e \in E$ such that $\en(e) = \varnothing$. Let $(E^2, \en^2)$ be the CFES with $E^2 = E^1 \setminus \text{Sing}(E^1)$ and $\en^2$ is defined so that for $e \in E^2$, $\en^2(e) = \{ X \setminus \text{Sing}(E^1) \, : \, X \in \en^1(e) \}$. If $X \subseteq E^1$ is a configuration in $(E^1, \en^1)$, then it is still a configuration in $(E^2, \en^2)$, because configurations cannot contain singular elements. If $X \subseteq E^2$ is a configuration in $(E^2, \en^2)$, then it is clearly a configuration in $(E^1, \en^1)$. Thus there is an isomorphism $\mathcal{F}(E^2) \cong \mathcal{F}(E^1)$.

Now $(\widetilde{E}, \widetilde{\en}) \coloneqq (E^2, \en^2)$ is a dependency event structure whose family of configurations is isomorphic to $\mathcal{F}(E)$.
\end{proof}

\begin{Cor}
The family of configurations of CFESs take values in diamond-free semimodular lattices. Furthermore if $(E, \dep)$ is a preDSC, then there exists a DSC $(\widetilde{E}, \widetilde{\dep})$ and an isomorphism $\rdp(E) \cong \rdp(\widetilde{E})$.
\end{Cor}

\begin{proof}
Given a preDSC $(E, \dep)$, let $(E', \dep')$ denote its irredundant hull (Definition \ref{def irredundant hull}). Clearly there exists an isomorphism $\rdp(E) \cong \rdp(E')$. Then $\alpha(E', \dep')$ is a CFES, so by Proposition \ref{prop family of configs} there exists a dependency event structure $(\widetilde{E}, \widetilde{\en})$ such that $\mathcal{F}(E) \cong \mathcal{F}(\widetilde{E})$. Then by Lemma \ref{lem DES to DSC},  $(\widetilde{E}, \widetilde{\dep}) = \beta(\widetilde{E}, \widetilde{\en})$ is a DSC such that $\rdp(\widetilde{E}) \cong \rdp(E') \cong \rdp(E)$.
\end{proof}

There is a further well known subclass of event structures.

\begin{Def}
We say that an event structure $(E, \Con, \vdash)$ is \textbf{stable} if for all $e \in E$, if $X \vdash e$, $Y \vdash e$, and $X \cup Y \cup e \in \Con$, then $X \cap Y \vdash e$.
\end{Def}

Suppose that $(E, \en)$ is a CFES that is also stable. Since $\Con = P(E)$, being stable implies that whenever $X \in \en(e)$ and $Y \in \en(e)$, then $X \cap Y \in \en(e)$. Thus if $(E, \en)$ is furthermore a stable dependency event structure, then $\alpha(E, \en)$ is a DSC with the property that if $e \in E$ with two depsets $D_0^e, D^1_e \in \dep(e)$, then $D_0^e \cap D_1^e \in \dep(e)$. But $\alpha(E, \en)$ is irredundant, thus it must be the case that $D_0^e = D_1^e$. In other words, stable dependency event structures must have unique minimal enabling sets. Let $\cat{SDES}$ denote the full subcategory of stable dependency event structures.

\begin{Cor}
The functors $\alpha$ and $\beta$ above restrict to an isomorphism of categories
\begin{equation}
    \cat{SDES} \cong \cat{DSNC}.
\end{equation}
\end{Cor}

\section{The Category of DSCs} \label{section category of DSCs}
In this section we study exactness properties of the category of DSCs. Let $U : \cat{DSC} \to \cat{FinSet}$ denote the forgetful functor $U(E, \dep) = E$. Let us show that this functor has a left adjoint. Define the free functor $F: \cat{FinSet} \to \cat{DSC}$ by $F(S) = (S, \dep_S)$, where $\dep_S : S \to P(P(S))$ is defined by $\dep_S(s) = \{ \varnothing \}$. We call $F(S)$ the \textbf{discrete} DSC on $S$. Clearly if $f: S \to U(E, \dep)$ is a set function, then $f$ will be a morphism of DSCs $f: F(S) \to (E, \dep)$, as $\varnothing \subseteq f^*(D^{f(s)} \cup s)$ for all $s \in S$. This proves the following result.

\begin{Lemma} \label{lem DSC is a concrete category}
The above functors define an adjunction $F: \cat{FinSet} \rightleftarrows \cat{DSC}: U$. Further, the forgetful functor $U: \cat{DSC} \to \cat{FinSet}$ is faithful, making $\cat{DSC}$ into a concrete category.
\end{Lemma}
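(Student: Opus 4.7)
The plan is to establish the hom-set bijection for the adjunction and then observe that the faithfulness of $U$ is essentially built into Definition \ref{def map of DSCs}. Before anything else I would verify that $F(S) = (S, \dep_S)$ is indeed a DSC: (D0) holds because $\dep_S(s) = \{\varnothing\}$ is a singleton and thus automatically irredundant, (D1) holds because $\{\varnothing\}$ is nonempty, (D2) holds vacuously because $s \notin \varnothing$, and (D3) holds because the empty set is trivially a complete event set. So $F$ lands in $\cat{DSC}$.

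Next I would exhibit the natural bijection
$$\cat{DSC}(F(S), (E, \dep)) \;\cong\; \cat{FinSet}(S, U(E, \dep))$$
as the map that sends a DSC morphism to its underlying set function. Applying $U$ is clearly one direction; for the other direction, given any set function $f: S \to E$, I must check that $f: F(S) \to (E, \dep)$ satisfies the condition of Definition \ref{def map of DSCs}. For each $s \in S$ and each $D^{f(s)} \in \dep(f(s))$, I need a $D^s \in \dep_S(s)$ with $D^s \subseteq f^*(D^{f(s)} \cup f(s))$. Since the only element of $\dep_S(s)$ is $\varnothing$, the choice $D^s = \varnothing$ works trivially. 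The two assignments are mutually inverse because a DSC morphism is fully determined by its underlying function. Naturality in both $S$ and $(E, \dep)$ is immediate because the bijection is the identity on underlying set-theoretic data, and precomposition/postcomposition act identically on both sides.

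For the faithfulness of $U$, the observation is that Definition \ref{def map of DSCs} specifies a morphism of DSCs as a set function together with a \emph{property}; no extra data is carried. Therefore if two DSC morphisms $f, g: (E, \dep) \to (E', \dep')$ satisfy $U(f) = U(g)$, they agree as set functions and hence are equal as DSC morphisms. A concrete category is by definition one equipped with a faithful functor to $\cat{FinSet}$, so this makes $\cat{DSC}$ concrete.

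There is no genuine obstacle in this proof; the only points that require any care are the verification that $F(S)$ satisfies all four DSC axioms (which is routine given the trivial dependency structure) and the observation that the hom-set bijection is literally the identity on underlying functions, which makes naturality a formality rather than a computation.
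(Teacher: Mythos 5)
Your proposal is correct and follows essentially the same route as the paper: the key observation in both is that for the discrete DSC $F(S)$ the morphism condition of Definition \ref{def map of DSCs} is automatic because $\varnothing \subseteq f^*(D^{f(s)} \cup f(s))$ always, so the hom-set bijection is the identity on underlying functions and faithfulness is immediate. Your additional checks (that $F(S)$ satisfies (D0)--(D3) and that the bijection is natural) are routine verifications the paper leaves implicit.
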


This result implies that $U$ preserves whatever limits exist in $\cat{DSC}$. Thus if a limit of a diagram of DSCs exists, it must be a DSC whose underlying set is a limit of the underlying sets in the diagram.

\begin{Prop} \label{prop dscs have initial and terminal objects}
The category $\cat{DSC}$ has initial and terminal objects.
\end{Prop}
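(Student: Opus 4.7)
The plan is to exhibit both objects explicitly, using the adjunction $F \dashv U$ of Lemma \ref{lem DSC is a concrete category} as guidance, and then verify the universal properties directly.

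For the initial object, the natural candidate is the empty DSC $(\varnothing, \dep_\varnothing)$, which is $F(\varnothing)$ in the notation above. Since $F$ is a left adjoint, it preserves initial objects, and $\varnothing$ is initial in $\cat{FinSet}$, so $F(\varnothing)$ is initial in $\cat{DSC}$. Concretely, for any DSC $(E, \dep)$ there is a unique set function $\varnothing \to E$, and the morphism condition of Definition \ref{def map of DSCs} is vacuously satisfied since there are no elements $e \in \varnothing$ to check.

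For the terminal object, since $U$ is a right adjoint it preserves terminal objects, so the underlying set of any terminal DSC must be a singleton. I would therefore look for DSC structures on $\{*\}$. The axioms (D0)--(D3) leave essentially one choice: (D2) forbids $\{*\} \in \dep(*)$, and (D1) forces $\dep(*)$ to be nonempty, so we must have $\dep(*) = \{\varnothing\}$. Call this DSC $\mathbf{1}$; it is in fact the discrete DSC on one element. I then need to check that for any DSC $(E, \dep)$, the unique set function $!: E \to \{*\}$ is a morphism of DSCs. Unwinding Definition \ref{def map of DSCs}, for each $e \in E$ and each $D^{!(e)} \in \dep(!(e)) = \{\varnothing\}$ we must find some $D^e \in \dep(e)$ with $D^e \subseteq !^*(\varnothing \cup \{*\}) = E$. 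Axiom (D1) supplies such a $D^e$, and any depset is automatically contained in $E$, so the condition holds.

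The verifications are short; the only mildly subtle point is noticing that the DSC axioms force the terminal structure on a singleton to be unique, so that both the object and the morphism into it are genuinely unique. Both halves then follow.
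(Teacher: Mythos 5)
Your proposal is correct and matches the paper's proof: both exhibit the empty DSC as the initial object and the discrete DSC on a singleton, $F(*)$, as the terminal object, with the same (easy) verifications of the universal properties. The extra observations you make — that the adjunction forces these choices and that the DSC axioms admit only one structure on a singleton — are accurate but not needed.
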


\begin{proof}
Consider the empty set $\varnothing$ equipped with the unique function $\dep_\varnothing: \varnothing \to P(P(\varnothing))$. This is vacuously a DSC. Clearly there is a unique morphism $(\varnothing, \dep_\varnothing) \to (E, \dep)$ for any DSC. Similarly there is a unique morphism $(E, \dep) \to F(*) = (*, \dep_*)$, where $*$ denotes the singleton set.
\end{proof}

Since $U: \cat{DSC} \to \cat{FinSet}$ preserves limits, if $f: (E,\dep) \to (E', \dep')$ is a monomorphism of DSCs, then it must be a monomorphism of sets. Now $f$ is a monomorphism if and only if for every DSC $X$, the function $\cat{DSC}(X,f): \cat{DSC}(X,E) \to \cat{DSC}(X,E')$ is injective. Equivalently, we have the following commutative diagram
\begin{equation*}
\begin{tikzcd}
	{\cat{DSC}(X,E)} && {\cat{DSC}(X,E')} \\
	{\cat{FinSet}(UX,UE)} && {\cat{FinSet}(UX,UE')}
	\arrow["{\cat{DSC}(X,f)}", from=1-1, to=1-3]
	\arrow["{\cat{FinSet}(UX,Uf)}"', from=2-1, to=2-3]
	\arrow["{U_E}"', from=1-1, to=2-1]
	\arrow["{U_{E'}}", from=1-3, to=2-3]
\end{tikzcd}
\end{equation*}
and since $U$ is faithful, the vertical morphisms are injective. Thus if $Uf$ is injective, then so is $\cat{FinSet}(UX,Uf)$, and therefore so is $\cat{DSC}(X,f)$, so we have proven the following result. 

\begin{Lemma}
A morphism $f: (E, \dep) \to (E', \dep')$ of DSCs is a monomorphism if and only if $Uf$ is an injective function.
\end{Lemma}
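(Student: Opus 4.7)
The statement has two directions. The $(\Leftarrow)$ direction is already essentially completed in the paragraph immediately preceding the lemma: if $Uf$ is injective, then the horizontal map $\cat{FinSet}(UX, Uf)$ is injective for every DSC $X$, and the displayed commutative square together with faithfulness of $U$ (and hence injectivity of the vertical comparison maps) forces $\cat{DSC}(X, f)$ to be injective, i.e.\ $f$ is a monomorphism. So the plan is really to supply the $(\Rightarrow)$ direction.

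For $(\Rightarrow)$, I would use the adjunction $F \dashv U$ established in Lemma \ref{lem DSC is a concrete category}. Concretely, pick the one-element set $*$ and consider the free DSC $F(*) = (*, \dep_*)$, which is the discrete DSC on a point. By the adjunction, DSC-morphisms $F(*) \to (E, \dep)$ are in natural bijection with set functions $* \to UE$, i.e.\ with elements of $E$. Thus given $a, b \in E$ with $Uf(a) = Uf(b)$, the two morphisms $\til a, \til b: F(*) \to (E, \dep)$ corresponding to $a$ and $b$ satisfy $f \circ \til a = f \circ \til b$ since both correspond under the adjunction to the single element $f(a) = f(b)$ of $E'$. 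Because $f$ is a monomorphism, this forces $\til a = \til b$, and by the adjunction $a = b$. Hence $Uf$ is injective.

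The only obstacle worth checking is that $F(*)$ really is a DSC and that the relevant maps out of it are DSC-morphisms; both are immediate from the construction of $F$ in Lemma \ref{lem DSC is a concrete category}, where any set function out of a discrete DSC is automatically a DSC-morphism (the condition in Definition \ref{def map of DSCs} is vacuous because $\varnothing \subseteq f^*(D^{f(e)} \cup f(e))$ always). Combining the two directions yields the lemma.
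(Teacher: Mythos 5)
Your proof is correct. The $(\Leftarrow)$ direction is word-for-word the paper's argument (the faithfulness-of-$U$ square), so there is nothing to compare there. For the $(\Rightarrow)$ direction you take a mildly different route from the paper: the paper disposes of it in one sentence by noting that $U$, being a right adjoint, preserves whatever limits exist, and hence sends monomorphisms of DSCs to monomorphisms of sets; you instead probe $f$ with the free DSC $F(*)$ on a point, using the adjunction bijection $\cat{DSC}(F(*), E) \cong \cat{FinSet}(*, UE) \cong E$ to turn two elements $a,b$ with $f(a)=f(b)$ into two parallel DSC-morphisms that $f$ coequalizes. These are really two faces of the same fact (right adjoints preserve monomorphisms), but your version is the more self-contained one at this point in the paper: the ``preserves limits, hence preserves monos'' argument tacitly needs the pullback of $f$ along itself to exist in $\cat{DSC}$, and pullbacks are only constructed later (via products and equalizers), whereas your argument needs nothing beyond the adjunction $F \dashv U$ already established and the observation that injectivity of a set map is detected by global elements. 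Your check that any set function out of a discrete DSC is automatically a DSC-morphism is also the right thing to verify and is correct.
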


The same reasoning as above then proves the following result.

\begin{Lemma}
If $f : E \to E'$ is a morphism of DSCs such that $U(f)$ is a surjection of sets, then $f$ is an epimorphism of DSCs.
\end{Lemma}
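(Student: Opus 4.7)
The plan is to mimic the argument used in the preceding lemma for monomorphisms, replacing injectivity by surjectivity and flipping the direction of composition. The key tools are that $U \colon \cat{DSC} \to \cat{FinSet}$ is a faithful functor (Lemma \ref{lem DSC is a concrete category}) and that in $\cat{FinSet}$ the epimorphisms coincide with the surjective functions.

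Concretely, I would let $g, h \colon (E', \dep') \to (E'', \dep'')$ be two maps of DSCs such that $g \circ f = h \circ f$ and aim to conclude $g = h$. Applying the functor $U$ gives $U(g) \circ U(f) = U(h) \circ U(f)$ in $\cat{FinSet}$. Since $U(f)$ is surjective by hypothesis, it is an epimorphism in $\cat{FinSet}$, so $U(g) = U(h)$. Faithfulness of $U$ then yields $g = h$, which is exactly the statement that $f$ is an epimorphism in $\cat{DSC}$.

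Alternatively, one could phrase the same argument diagrammatically, as in the preceding paragraph of the paper: for every DSC $X$, the square
\begin{equation*}
\begin{tikzcd}
{\cat{DSC}(E',X)} && {\cat{DSC}(E,X)} \\
{\cat{FinSet}(UE',UX)} && {\cat{FinSet}(UE,UX)}
\arrow["{\cat{DSC}(f,X)}", from=1-1, to=1-3]
\arrow["{\cat{FinSet}(Uf,UX)}"', from=2-1, to=2-3]
\arrow["{U_{E'}}"', hook, from=1-1, to=2-1]
\arrow["{U_E}", hook, from=1-3, to=2-3]
\end{tikzcd}
\end{equation*}
commutes with injective vertical maps (by faithfulness), and the bottom horizontal map is injective because $Uf$ is surjective hence epic in $\cat{FinSet}$; a diagram chase forces the top horizontal map to be injective, which is precisely the definition of $f$ being epi in $\cat{DSC}$.

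There is no real obstacle here: the only content beyond formal nonsense is the fact that surjections in $\cat{FinSet}$ are epimorphisms, which is standard. It is worth noting that the converse is expected to fail (so this gives only a sufficient condition for being epi), which is presumably why the statement is phrased as an implication rather than an equivalence; a counterexample to the converse would presumably come later or be left implicit.
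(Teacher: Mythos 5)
Your proof is correct and is essentially the paper's own argument: the paper states that the lemma follows by ``the same reasoning as above,'' namely the hom-set diagram with the faithful functor $U$ together with the fact that surjections are epimorphisms in $\cat{FinSet}$, which is exactly what you wrote in both your elementwise and diagrammatic versions. One correction to your closing aside: the converse does not fail --- the paper immediately goes on to prove, via a ``doubling'' construction on an element outside the image, that every epimorphism of DSCs is surjective, so the statement is in fact an equivalence (Corollary \ref{cor epimorphism iff surjection}).
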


We now wish to show that if a morphism $f: (E,\dep) \to (E', \dep')$ of DSCs is an epimorphism, then $U(f)$ is a surjection of sets. So suppose $f: E \to E'$ is a morphism of DSCs such that $U(f)$ is not a surjection. We wish to show that it cannot be an epimorphism in the category $\cat{DSC}$. To do so, let us define a construction on DSCs that "doubles" an element.

Let $(E, \dep)$ be a DSC, and let $b \in E$. We will define a preDSC $E_{b\to b_1, b_2}$ as follows. Let the underlying set of $E_{b \to b_1, b_2}$ be defined as $(E \setminus b) \cup \{b_1, b_2 \}$. Let us define its dependency structure $\dep'$ as follows. Let $\dep'(b_1) = \dep'(b_2) = \dep(b)$. For every $e \in E \setminus b$, and $D^e \in \dep(e)$, if $b \notin D^e$, then let
\begin{equation}
    D^e_{b \to b_1, b_2} = \begin{cases}
        D^e & b \notin D^e, \\
        (D^e \setminus b) \cup \{b_1, b_2\} & b \in D^e.
    \end{cases}
\end{equation} 
This will still be a complete subset of $E_{b \to b_1, b_2}$ since $\dep'(b_1) = \dep'(b_2) = \dep(b)$ and $D^e$ was complete in $E$. Let $\dep'(e)$ contain all of the $D^e_{b \to b_1, b_2}$ indexed over the depsets $D^e$ in $E$. This defines a preDSC, and it is easy to check that it is in fact a DSC.

\begin{Prop}
Let $f: (E, \dep) \to (E', \dep')$ be a morphism of DSCs such that $U(f)$ is not a surjection. Then $f$ is not an epimorphism in $\cat{DSC}$.
\end{Prop}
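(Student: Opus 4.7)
The plan is to exploit the doubling construction $E'_{b \to b_1, b_2}$ defined just before the proposition to exhibit two distinct morphisms out of $(E', \dep')$ that become equal after precomposition with $f$.

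Since $U(f)$ is not surjective, choose some $b \in E' \setminus f(E)$, and consider the DSC $(E'_{b \to b_1, b_2}, \dep_{\to})$. I would then define two maps $g_1, g_2 : E' \to E'_{b \to b_1, b_2}$ which agree with the identity on $E' \setminus b$ and send $b$ to $b_1$ and $b_2$ respectively. Manifestly $g_1 \neq g_2$ because $g_1(b) = b_1 \neq b_2 = g_2(b)$. On the other hand, for every $e \in E$ we have $f(e) \neq b$, so $g_1(f(e)) = f(e) = g_2(f(e))$; hence $g_1 \circ f = g_2 \circ f$. Therefore once we know $g_1$ and $g_2$ are morphisms in $\cat{DSC}$, we can conclude $f$ is not an epimorphism.

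The main work is to verify that $g_1$ (and symmetrically $g_2$) is a morphism of DSCs, i.e.\ that for every $e \in E'$ and every $D^{g_1(e)} \in \dep_{\to}(g_1(e))$ there exists a depset $D^e \in \dep'(e)$ with $D^e \subseteq g_1^*(D^{g_1(e)} \cup g_1(e))$. I would split into cases according to whether $e = b$ or $e \in E' \setminus b$, and in the latter case whether $b$ belongs to the relevant depset of $e$. In each case the construction of $\dep_{\to}$ gives a natural candidate for $D^e$ (either $D^{b_1}$ itself when $e=b$, or the untransformed $D^e$ in $E'$ otherwise), and the verification reduces to the elementary computation that $g_1^*(\{b_1\}) = \{b\}$, $g_1^*(\{b_2\}) = \varnothing$, and $g_1$ is the identity on $E' \setminus b$.

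The only mildly subtle point is bookkeeping around the case $b \in D^e$ inside $E'$: the depset gets transformed to $(D^e \setminus b) \cup \{b_1, b_2\}$, and one must confirm $g_1^*((D^e \setminus b) \cup \{b_1, b_2\} \cup e) = D^e \cup e$, which indeed contains $D^e$. This is routine but is the one step where it is easy to drop a term, so it is where I would be most careful. Everything else is bookkeeping, so the proposition follows.
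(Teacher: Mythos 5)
Your proposal is correct and follows essentially the same route as the paper: pick $b \in E' \setminus f(E)$, form the doubling $E'_{b\to b_1,b_2}$, and separate $f$ with the two maps sending $b$ to $b_1$ and $b_2$. The paper leaves the verification that $g_1, g_2$ are DSC morphisms implicit, whereas you carry it out; your case analysis and the computation $g_1^*\bigl((D^e\setminus b)\cup\{b_1,b_2\}\cup e\bigr) = D^e\cup e$ are exactly right.
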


\begin{proof}
If $U(f)$ is not a surjection, then there exists a $b \in E'$ such that $b \notin f(E)$. Let $E'' = E'_{b \to b_1, b_2}$ be the ``doubling" construction defined above, and consider two functions $g_1, g_2: E' \to E''$ with $g_1(e') = g_2(e') = e'$ for all $e' \neq b$, and have $g_1(b) = b_1$ and $g_2(b) = b_2$. It is easy to see from the construction that $g_1 f = g_2 f$. Since $g_1 \neq g_2$, this implies that $f$ is not an epimorphism.
\end{proof}

\begin{Cor} \label{cor epimorphism iff surjection}
A morphism $f: (E, \dep) \to (E', \dep')$ of DSCs is an epimorphism if and only if $U(f)$ is a surjection of sets.
\end{Cor}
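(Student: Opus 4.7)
The plan is to observe that this corollary is simply the combination of the two results immediately preceding it in the text, so no new argument is needed. Specifically, one direction (the ``if'' direction) was established by noting that $U : \cat{DSC} \to \cat{FinSet}$ is faithful and has a left adjoint, and that faithful functors reflect epimorphisms in a suitable sense: if $U(f)$ is surjective, then for any parallel pair $g_1, g_2 : (E',\dep') \to (X,\dep_X)$ with $g_1 f = g_2 f$, we have $U(g_1) \circ U(f) = U(g_2) \circ U(f)$, which forces $U(g_1) = U(g_2)$ by surjectivity, and hence $g_1 = g_2$ by faithfulness of $U$.

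For the other direction (the ``only if'' direction), the preceding Proposition provides the key construction: given $f$ whose underlying map misses a point $b \in E'$, one forms the ``doubled'' DSC $E'_{b \to b_1, b_2}$ and exhibits two distinct DSC morphisms $g_1, g_2 : E' \to E'_{b \to b_1, b_2}$ that agree after precomposition with $f$. This witnesses that $f$ is not an epimorphism.

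Thus the proof of the corollary is just: combine the two preceding results. There is no obstacle since the hard work was already done in constructing the doubling $E'_{b \to b_1, b_2}$ and verifying it is a DSC, and in establishing the faithfulness of $U$ via Lemma \ref{lem DSC is a concrete category}. Accordingly, I would write only a one-line proof, for instance: \emph{This is immediate from the preceding Lemma and Proposition.}
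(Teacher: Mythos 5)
Your proposal is correct and matches the paper exactly: the corollary is stated immediately after the two preceding results (surjective implies epi via faithfulness of $U$, and non-surjective implies non-epi via the doubling construction) and the paper supplies no further argument, treating it as their immediate combination just as you do. Your spelled-out version of the ``if'' direction is also the same faithfulness argument the paper invokes by saying ``the same reasoning as above.''
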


Consider two DSCs $(E, \dep), (E', \dep')$. We can take the coproduct of their underlying sets $E + E'$, and consider the function $\dep_+: E + E' \to P(P(E + E'))$ defined by 
$$\dep_+(e) = \begin{cases}
\dep(e) & \text{if } e \in E \\
\dep'(e) & \text{if } e \in E'. \end{cases}$$
It is not hard to show that $(E + E', \dep_+)$ is a DSC.

\begin{Lemma}
Given two DSCs $(E, \dep), (E', \dep')$, the DSC $(E + E', \dep_+)$ is their categorical coproduct in the category $\cat{DSC}$.
\end{Lemma}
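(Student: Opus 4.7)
The plan is to verify the universal property of coproducts directly, using the fact (Lemma \ref{lem DSC is a concrete category}) that the forgetful functor $U: \cat{DSC} \to \cat{FinSet}$ is faithful and that any underlying function between DSCs is at most one morphism. Let $i_E : E \to E + E'$ and $i_{E'} : E' \to E + E'$ denote the usual inclusions of the set-theoretic coproduct. The first step is to check that $i_E$ and $i_{E'}$ are morphisms of DSCs. For $e \in E$, every element of $\dep_+(i_E(e)) = \dep(e)$ is a subset $D^e \subseteq E$; since $i_E$ is injective, $i_E^*(D^e \cup \{i_E(e)\}) = D^e \cup \{e\}$, so one may take the witness in Definition \ref{def map of DSCs} to be $D^e$ itself. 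The argument for $i_{E'}$ is symmetric.

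Next, given any DSC $(X, \dep_X)$ and morphisms of DSCs $f: (E, \dep) \to (X, \dep_X)$ and $g: (E', \dep') \to (X, \dep_X)$, the universal property of the coproduct in $\cat{FinSet}$ produces a unique set function $h: E + E' \to X$ with $h \circ i_E = f$ and $h \circ i_{E'} = g$; faithfulness of $U$ guarantees that \emph{if} $h$ is a morphism of DSCs, it is the unique such factorization. It remains to verify that $h$ is a morphism of DSCs. Let $e \in E + E'$ and suppose without loss of generality that $e \in E$, so that $\dep_+(e) = \dep(e)$ and $h(e) = f(e)$. Given any $D^{h(e)} \in \dep_X(h(e))$, the fact that $f$ is a morphism of DSCs produces a depset $D^e \in \dep(e)$ with $D^e \subseteq f^*(D^{h(e)} \cup f(e))$. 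Because $h|_E = f$, for every $A \subseteq X$ we have $f^*(A) = h^*(A) \cap E \subseteq h^*(A)$, so $D^e \subseteq h^*(D^{h(e)} \cup h(e))$, as required. The case $e \in E'$ is symmetric.

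There is no real obstacle here: $\dep_+$ was defined piecewise precisely so that depsets of elements in $E$ remain subsets of $E \subseteq E + E'$, and likewise for $E'$, and the morphism condition of Definition \ref{def map of DSCs} only constrains behavior one element at a time. The only place to be slightly careful is that the witness depsets demanded by the morphism property for $h$ live on the $E$-side (or $E'$-side) corresponding to the element $e$, which is exactly what is supplied by the hypothesis that $f$ and $g$ are morphisms of DSCs.
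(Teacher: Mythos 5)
Your proposal is correct and follows essentially the same route as the paper: verify the inclusions are morphisms directly from the definition, obtain the unique underlying function from the coproduct in $\cat{FinSet}$, and check the morphism condition for it by noting that $f^*(A) \subseteq h^*(A)$ since $h$ restricts to $f$ on $E$ (the paper phrases this as $(f,g)^{-1}(D^q \cup q) = f^{-1}(D^q \cup q) + g^{-1}(D^q \cup q)$). Your use of faithfulness of $U$ to dispatch uniqueness is a slightly cleaner packaging of what the paper leaves as ``not hard to check,'' but the argument is the same.
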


\begin{proof}
It is obvious that the inclusion morphisms $\text{inl} : (E, \dep) \to (E + E', \dep_+)$ and $\text{inr}: (E', \dep') \to (E + E', \dep_+)$ are morphisms of DSCs. So suppose that there are morphisms $f: (E, \dep) \to (Q, \dep^Q)$ and $g: (E', \dep') \to (Q,\dep^Q)$. We wish to show that the induced set function $( f, g ) : E + E' \to Q$ is a morphism of DSCs. Namely we need to show that for every $x \in E + E'$, and every $D^q \in \dep^Q(q)$ where $q = (f,g)(x)$, there exists a $D^x \in \dep_+(x)$ such that $D^x \subseteq (f,g)^{-1}(D^q \cup q)$. WLOG suppose that $x = e$ for some $e \in E$, then $(f,g)(x) = f(e)$. Since $f$ is a morphism of DSCs, this implies that there exists a $D^e \in \dep(e)$ such that $D^e \subseteq f^{-1}( D^q \cup q)$. Now since $f^{-1}(D^q \cup q)$ is a subset of $E$, we can also consider it as a subset of $E + E'$. In this sense it is clear that $f^{-1}(D^q \cup q) \subseteq (f,g)^{-1}(D^q \cup q)$, as the latter is equal to $f^{-1}(D^q \cup q) + g^{-1}(D^q \cup q)$. The case for $x = e'$ with $e' \in E'$ is similar. Thus $(f,g)$ is a morphism of DSCs. It is not hard to check that $(f,g)$ is the unique morphism of DSCs such that $(f,g) \circ \text{inl} = f$ and $(f,g) \circ \text{inr} = g$.
\end{proof}

This also proves that the forgetful functor $U: \cat{DSC} \to \cat{FinSet}$ preserves coproducts.

Let $(E, \dep)$ and $(E', \dep')$ be DSCs. Consider the set theoretic product $E \times E'$. We wish to put a DSC structure on this set. Let $(e, e') \in E \times E'$. Define a function $\dep_\times : E \times E' \to PP(E \times E')$ by $\dep_\times (e,e') = \{ e \times D^{e'} \cup D^{e} \times e' \cup D^e \times D^{e'} \, : \, D^e \in \dep(e), D^{e'} \in \dep'(e') \}$.  It is not hard to check that $(E \times E', \dep_\times)$ is a DSC.

\begin{Prop} \label{prop dscs have products}
Given two DSCs $(E,\dep), (E',\dep')$, the DSC $(E \times E', \dep_\times)$ is their categorical product in the category $\cat{DSC}$.
\end{Prop}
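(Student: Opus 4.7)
The plan is to proceed in two phases: first verify that $(E \times E', \dep_\times)$ satisfies the axioms of Definition \ref{def DSC}, and second establish the universal property in $\cat{DSC}$ by exhibiting projections and constructing the pairing of two morphisms. Throughout, I will write $F(D^e, D^{e'}) := (\{e\} \cup D^e) \times (\{e'\} \cup D^{e'}) \setminus \{(e, e')\}$, so that $\dep_\times(e, e') = \{F(D^e, D^{e'}) : D^e \in \dep(e),\, D^{e'} \in \dep'(e')\}$.

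For the axioms, (D1) is immediate and (D2) follows since $e \notin D^e$ and $e' \notin D^{e'}$ by (D2) in each coordinate. For (D0) I would recover $D^e$ from $F(D^e, D^{e'})$ as the ``row slice'' $\{u : (u, e') \in F(D^e, D^{e'})\}$ and $D^{e'}$ as the analogous column slice, so that any inclusion between two depsets of $(e, e')$ forces componentwise inclusions that equalize by irredundancy of $\dep$ and $\dep'$. For (D3), given $(u, v) \in F(D^e, D^{e'})$, I split into the three cases $u = e, v \in D^{e'}$; $u \in D^e, v = e'$; $u \in D^e, v \in D^{e'}$. In each case completeness of $D^e$ (or a trivial choice when $u = e$) provides a $D^u \in \dep(u)$ with $D^u \subseteq D^e$, and similarly a $D^v \in \dep'(v)$, and one checks directly that $F(D^u, D^v) \subseteq F(D^e, D^{e'})$ by examining each of the three summands of the defining formula. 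The projection $\pi_1$ is then a morphism because, given $D^e \in \dep(e)$, choosing any $D^{e'} \in \dep'(e')$ yields $F(D^e, D^{e'}) \subseteq (D^e \cup \{e\}) \times E' = \pi_1^*(D^e \cup \{e\})$, and symmetrically for $\pi_2$.

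For the universal property, given morphisms $h : Y \to E$ and $k : Y \to E'$, the set function $\langle h, k\rangle(y) = (h(y), k(y))$ is uniquely forced by $\pi_1 \circ \langle h, k\rangle = h$ and $\pi_2 \circ \langle h, k\rangle = k$, so only the DSC morphism property remains. Fix $y \in Y$ and a depset $F(D^{h(y)}, D^{k(y)}) \in \dep_\times(h(y), k(y))$. Since preimage commutes with intersection, $\langle h, k\rangle^*\bigl((D^{h(y)} \cup \{h(y)\}) \times (D^{k(y)} \cup \{k(y)\})\bigr)$ equals $A \cap B$, where $A = h^*(D^{h(y)} \cup \{h(y)\})$ and $B = k^*(D^{k(y)} \cup \{k(y)\})$. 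Both $A$ and $B$ are complete event sets in $Y$ containing $y$, and the task is to produce a depset $D^y \in \dep^Y(y)$ contained in $A \cap B$. I expect this to be the main obstacle: $A \cap B$ need not itself be complete, so Proposition \ref{prop characterization of e-minimal complete event sets of a DSC} does not apply to $A \cap B$ directly. My plan is to use the morphism property of both $h$ and $k$ simultaneously to place $y$ inside the lattice-theoretic meet $A \meet B$ in $\rdp(Y)$, and then invoke Lemma \ref{lem characterization of meets in DSC} to extract the required $y$-depset contained in $A \meet B \subseteq A \cap B$.
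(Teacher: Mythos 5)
Your verification of the DSC axioms and of the projections is sound and fills in details the paper leaves to the reader, and your reduction of the universal property to the single question of whether some depset of $y$ lies in $A \cap B$ (equivalently, by Lemma \ref{lem characterization of meets in DSC}, whether $y \in A \meet B$) is exactly the right way to frame the problem. The difficulty is that this last step is announced as a plan rather than carried out, and it cannot be carried out: the hypothesis that $h$ is a morphism produces \emph{some} depset $D^y_1 \in \dep^Y(y)$ with $D^y_1 \subseteq A$, and the hypothesis on $k$ produces \emph{some} depset $D^y_2 \subseteq B$, but nothing forces these to coincide, and in general no single depset of $y$ lies in $A \cap B$. Concretely, let $Y = a.b \join c$ as in Example \ref{ex DSC a depends on b or c} (so $\dep^Y(a) = \{\{b\},\{c\}\}$), let $E$ be the DSC on $\{u,v,w\}$ with $\dep(u) = \{\{v\}\}$ and $\dep(v) = \dep(w) = \{\varnothing\}$, and let $E'$ be a disjoint copy on $\{u',v',w'\}$. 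Define $h(a)=u$, $h(b)=v$, $h(c)=w$ and $k(a)=u'$, $k(b)=w'$, $k(c)=v'$; both are morphisms of DSCs (for $a$, use the depset $\{b\}$ to certify $h$ and the depset $\{c\}$ to certify $k$). For the unique depset of $(u,u')$ one gets $A = h^*(\{u,v\}) = \{a,b\}$ and $B = k^*(\{u',v'\}) = \{a,c\}$, so $A \cap B = \{a\}$, which contains no depset of $a$; hence $a \notin A \meet B$ and $\langle h,k\rangle$ is not a morphism of DSCs.

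You should also know that the paper's own proof makes precisely the leap you were worried about: it asserts, for one and the same $D^q$, that $D^q \subseteq f^{-1}(D^e)\cup f^{-1}(e)$ and $D^q \subseteq g^{-1}(D^{e'}) \cup g^{-1}(e')$ hold simultaneously, which is more than Definition \ref{def map of DSCs} provides. So the obstacle you isolated is genuine and is not resolved in the paper either; the example above shows that $(E\times E',\dep_\times)$ with the evident projections does not satisfy the stated universal property, since any mediating morphism would have to agree with $\langle h,k\rangle$ on underlying sets (the forgetful functor being faithful). Closing the gap would require either a stronger notion of morphism or a different candidate for the product, not a cleverer argument at the step you left open.
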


\begin{proof}
First we must show that the projection morphisms $\pi_1: (E \times E', \dep_\times) \to (E, \dep)$ and $\pi_2: (E \times E', \dep_\times) \to (E', \dep')$ are morphisms of DSCs. We will show this for $\pi_1$. If $(e,e') \in E \times E'$, then we need to show that for every $D^e \in \dep(e)$, there exists a $D^{(e,e')} \in \dep(e,e')$ such that
\begin{equation} \label{eqn projection morphism}
    D^{(e,e')} \subseteq \pi_1^{-1} \left( D^e \cup e \right).
\end{equation}
Now $\pi_1^{-1} \left( D^e \cup e \right) = D^e \times E' \cup e \times E'$. Thus for any $D^{e'} \in \dep'(e')$, we have $e \times D^{e'} \cup D^e \times e' \cup D^e \times D^{e'} \subseteq e \times E' \cup D^e \times E'$. Thus (\ref{eqn projection morphism}) holds. The case for $\pi_2$ holds similarly.

Now let us prove that $(E \times E', \dep_\times)$ satisfies the universal property of a product. Namely if we have morphisms of DSCs $f: (Q, \dep^Q) \to (E, \dep)$ and $g: (Q, \dep^Q) \to (E', \dep')$, we wish to show that the set theoretic morphism $\langle f, g \rangle : Q \to E \times E'$ is a morphism of DSCs. If $q \in Q$, $f(q) = e, g(q) = e'$ and $D^{(e,e')} \in \dep_\times(e,e')$, then there exists a $D^q \in \dep^Q(q)$ such that
\begin{equation}
    D^q \subseteq \langle f, g \rangle^{-1} \left( D^{(e,e')} \cup (e,e') \right).
\end{equation}
Now $D^{(e,e')} = D^e \times D^{e'} \cup D^e \times e' \cup e \times D^{e'}$ for some $D^e \in \dep(e), D^{e'} \in \dep'(e')$, and thus
\begin{equation} \label{eqn preimage of product}
\begin{aligned}
\langle f, g \rangle^{-1} \left( D^{(e,e')} \cup (e,e') \right) & = [f^{-1}(D^e) \cap g^{-1}(D^{e'})] \cup [f^{-1}(D^e) \cap g^{-1}(e')] \\ & \cup [f^{-1}(e) \cap g^{-1}(D^{e'})] \cup [f^{-1}(e) \cap g^{-1}(e')]
\end{aligned}
\end{equation}
Now since $f$ and $g$ are morphisms, we have $D^q \subseteq f^{-1}(D^e) \cup f^{-1}(e)$ and $D^q \subseteq g^{-1}(D^{e'}) \cup g^{-1}(e').$ Thus $D^q \subseteq (f^{-1}(D^e) \cup f^{-1}(e) ) \cap (g^{-1}(D^{e'}) \cup g^{-1}(e'))$. Now if we let 
$$f^{-1}(D^e) = X, \, g^{-1}(e') = Y, \, g^{-1}(D^{e'}) = Z, \, f^{-1}(e) = W,$$ 
then we have
\begin{equation*}
    \begin{aligned}
(f^{-1}(D^e) \cup f^{-1}(e) ) \cap (g^{-1}(D^{e'}) \cup g^{-1}(e')) & = (X \cup W) \cap (Z \cup Y) \\
& = (X \cap (Z \cup Y)) \cup (W \cap (Z \cup Y)) \\
& = (X \cap Z) \cup (X \cap Y) \cup (W \cap Z) \cup (W \cap Y).
    \end{aligned}
\end{equation*}
and this last expression is equal to
$$
[f^{-1}(D^e) \cap g^{-1}(D^{e'})] \cup [f^{-1}(D^e) \cap g^{-1}(e')] \cup [f^{-1}(e) \cap g^{-1}(D^{e'})] \cup [f^{-1}(e) \cap g^{-1}(e')].
$$
Which is precisely (\ref{eqn preimage of product}). Thus the morphism $\langle f, g \rangle$ is a morphism of DSCs.
\end{proof}

It is also easy to see that products of discrete DSCs will be discrete DSCs. In other words, the functor $F: \cat{FinSet} \to \cat{DSC}$ preserves products.

\begin{Def} \label{def irredundant hull}
Let $(E, \dep)$ be a preDSC. We say that a depset $D^e \in \dep(e)$ is a \textbf{minimal depset} if for every $D_0^e \in \dep(e)$ if $D_0^e \subseteq D^e$, then $D^e = D_0^e$. Given a preDSC $(E, \dep)$ let $(E, \widetilde{\dep})$ denote the preDSC where $\widetilde{\dep}(e)$ is the set of minimal depsets of $e$. Then $(E, \widetilde{\dep})$ is irredundant. We call $(E, \widetilde{\dep})$ the \textbf{irredundant hull} of $(E, \dep)$.
\end{Def}

Suppose that $(E, \dep)$ is a DSC, and $A \subseteq E$ is a subset. Let $\dep|_A$ denote the function $\dep|_A : A \to PP(A)$ defined so that $D^a \in \dep|_A(a)$ if there exists a $D_0^a \in \dep(a)$ such that $D^a = D_0^a \cap A$. The pair $(A, \dep|_A)$ forms a preDSC, which we call the \textbf{restriction} of $E$ to $A$, but in general it is not an irredundant preDSC.

\begin{Lemma} \label{lem subset DSC}
If $(E, \dep)$ is a DSC, and $A \subseteq E$ is a subset, then the irredundant hull of its restriction $(A, \widetilde{\dep|_A})$ is a DSC. Further, the inclusion $i: (A, \widetilde{\dep|_A}) \to (E, \dep)$ is a morphism of DSCs.
\end{Lemma}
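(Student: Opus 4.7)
The plan is to verify the four DSC axioms (D0)--(D3) for $(A, \widetilde{\dep|_A})$ and then directly check Definition \ref{def map of DSCs} for the inclusion. Throughout, I will systematically transfer facts about $(E, \dep)$ to $A$ by intersecting with $A$ and then, when needed, passing from $\dep|_A$ to its irredundant hull $\widetilde{\dep|_A}$ using the fact that $A$ is finite, so every depset in $\dep|_A(a)$ contains a minimal one lying in $\widetilde{\dep|_A}(a)$.

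Axiom (D0) holds by the very construction of the irredundant hull. For (D1), given $a \in A$, (D1) for $E$ produces some $D^a \in \dep(e)$, hence $D^a \cap A \in \dep|_A(a)$, so $\dep|_A(a)$ is nonempty, and therefore so is $\widetilde{\dep|_A}(a)$. For (D2), any $X \in \widetilde{\dep|_A}(a)$ is of the form $X = D^a \cap A$ (or a subset thereof) for some $D^a \in \dep(a)$; since $a \notin D^a$ by (D2) for $E$, we also have $a \notin X$.

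The main work lies in (D3). Let $X \in \widetilde{\dep|_A}(a)$, so $X \subseteq D^a \cap A$ for some $D^a \in \dep(a)$, and in fact, by minimality and the definition of $\dep|_A$, we may take $X = D^a \cap A$ for a suitably chosen $D^a$ (else we can replace it). Given $x \in X \subseteq D^a$, completeness of $D^a$ in $E$ (axiom (D3) for $E$) yields some $D^x \in \dep(x)$ with $D^x \subseteq D^a$. Then $D^x \cap A \subseteq D^a \cap A = X$ and $D^x \cap A \in \dep|_A(x)$. Since $\dep|_A(x)$ is finite, it contains a minimal element $D \in \widetilde{\dep|_A}(x)$ with $D \subseteq D^x \cap A \subseteq X$. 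Hence every $X \in \widetilde{\dep|_A}(a)$ is complete in $(A, \widetilde{\dep|_A})$. The only subtle point here, and the main obstacle, is the transition from $\dep|_A$ to $\widetilde{\dep|_A}$: one must check that irredundancy does not destroy completeness, which is handled by observing that taking a minimal subset only makes things smaller and thus still contained in $X$.

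Finally, for the inclusion $i: A \hookrightarrow E$, fix $a \in A$ and $D^{i(a)} = D^a \in \dep(a)$. We compute
\[
i^*(D^a \cup \{a\}) = (D^a \cup \{a\}) \cap A = (D^a \cap A) \cup \{a\},
\]
using $a \in A$. Now $D^a \cap A \in \dep|_A(a)$, so there is a minimal depset $D \in \widetilde{\dep|_A}(a)$ with $D \subseteq D^a \cap A \subseteq i^*(D^a \cup \{a\})$, which is exactly the condition of Definition \ref{def map of DSCs}. Thus $i$ is a morphism of DSCs.
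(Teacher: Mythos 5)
Your proof is correct and follows essentially the same route as the paper's: verify (D0)--(D2) by direct transfer from $(E,\dep)$, establish (D3) by pulling a depset $D^x \subseteq D^a_0$ from completeness in $E$, intersecting with $A$, and then passing to a minimal depset below $D^x \cap A$ (which exists by finiteness and remains inside the given depset), and finally check the inclusion by the same minimal-depset observation. The only blemishes are cosmetic (a typo $\dep(e)$ for $\dep(a)$ in (D1), and the hedge ``or a subset thereof'' in (D2) is unnecessary since elements of $\widetilde{\dep|_A}(a)$ are exactly of the form $D^a \cap A$).
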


\begin{proof}
Clearly $(A, \widetilde{\dep|_A})$ satisfies (D0) by construction. Clearly (D1) and (D2) hold, since they hold for $(E, \dep)$. Let us show that it satisfies (D3), namely that every depset is complete. Suppose that $a \in A$, $D^a \in \widetilde{\dep|_A}(a)$ and $x \in D^a$. Since $D^a = D^a_0 \cap A$ for some $D^a_0 \in \dep(a)$, this implies that $x \in A$ and $x \in D^a_0$. Since $(E, \dep)$ is a DSC, this means that there exists some $D^x \in \dep(x)$ such that $D^x \subseteq D^a_0$. Thus $D^x \cap A \subseteq D^a$. If $D^x \cap A$ is a minimal dependency set for $x$ in $(A, \dep|_A)$, then $D^x \cap A \in \widetilde{\dep|_A}(x)$. If $D^x \cap A$ is not a minimal dependency set for $x$ in $(A, \dep|_A)$, then there exists some $D^x_0 \in \dep(x)$ such that $D^x_0 \cap A$ is a minimal depset of $x$ and $D^x_0 \cap A \subseteq D^x \cap A$. Thus $D^x_0 \cap A \subseteq D^a$ and $D^x_0 \cap A \in \widetilde{\dep|_A}(x)$. Since $x$ was arbitrary $D^a$ is complete. Thus $(A, \widetilde{\dep|_A})$ satisfies (D3), and therefore is a DSC.

Let us show that the inclusion is a morphism of DSCs. Namely if $a \in A$, then for every $D^a \in \dep(a)$, there exists a $D^a_0 \in \widetilde{\dep|_A}(a)$ such that $D^a_0 \subseteq D^a \cup a$. But this is clearly true as one could take $D^a_0$ to be the minimal depset contained in $D^a \cap A$.
\end{proof}

If $(E, \dep)$ is a DSC, and $A \subseteq E$, then we call $(A, \widetilde{\dep|_A})$ the \textbf{subset DSC} of $E$ with respect to $A$. We can also say that we have equipped $A$ with the subset DSC structure.

Now suppose that there are two DSC morphisms $f,g : (E, \dep) \to (E', \dep')$. We can consider the subset $\text{Eq}(f,g) = \{e \in E \, : \, f(e) = g(e) \} \subseteq (E, \dep)$, which is the categorical equalizer in $\cat{FinSet}$. Thanks to Lemma \ref{lem subset DSC}, equipping $\text{Eq}(f,g)$ with the subset DSC structure gives a DSC for which the inclusion morphism is a morphism of DSCs. We wish to show that this is a categorical equalizer of $f$ and $g$.

\begin{Prop}\label{prop dscs have equalizers}
Given two morphisms of DSCs $f,g: (E, \dep) \to (E', \dep')$, the set $\text{Eq}(f,g)$ equipped with the subset DSC structure from $(E, \dep)$, denoted by $(\text{Eq}(f,g), \widetilde{\dep})$ is an equalizer for the two morphisms in the category $\cat{DSC}$.
\end{Prop}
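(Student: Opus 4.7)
The plan is to first invoke Lemma \ref{lem subset DSC}, which already tells us that $(\text{Eq}(f,g), \widetilde{\dep})$ is a DSC and that the inclusion $i \colon \text{Eq}(f,g) \hookrightarrow E$ is a DSC morphism. That $f \circ i = g \circ i$ is immediate from the set-theoretic definition of $\text{Eq}(f,g)$, so the only real content is the universal property.

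Next, suppose $h \colon (Q,\dep^Q) \to (E,\dep)$ is any DSC morphism with $f \circ h = g \circ h$. At the level of underlying sets, $h(q) \in \text{Eq}(f,g)$ for every $q \in Q$, so there is a unique set function $h' \colon Q \to \text{Eq}(f,g)$ such that $i \circ h' = h$; uniqueness of the factorization in $\cat{DSC}$ will then follow automatically from the injectivity of $i$ together with faithfulness of $U$ (Lemma \ref{lem DSC is a concrete category}). So the only step requiring work is showing that $h'$ is itself a morphism of DSCs.

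For this, let $A = \text{Eq}(f,g)$, fix $q \in Q$, and let $D \in \widetilde{\dep|_A}(h'(q))$ be a minimal depset in the subset DSC structure. By the construction of the irredundant hull, $D = D_0^{h(q)} \cap A$ for some $D_0^{h(q)} \in \dep(h(q))$. Because $h$ is a morphism of DSCs, there exists $D^q \in \dep^Q(q)$ with $D^q \subseteq h^{*}\bigl(D_0^{h(q)} \cup h(q)\bigr)$. Now, crucially, $h$ factors through $A$, so for every $q' \in D^q$ we have $h(q') \in A$, which lets us intersect with $A$: $h(q') \in \bigl(D_0^{h(q)} \cup h(q)\bigr) \cap A = (D_0^{h(q)} \cap A) \cup h(q) = D \cup h'(q)$, using $h(q) = h'(q) \in A$. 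Translating back via $h = i \circ h'$ gives $D^q \subseteq (h')^{*}(D \cup h'(q))$, which is exactly the condition of Definition \ref{def map of DSCs}.

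The main (mild) obstacle I anticipate is the bookkeeping around the irredundant hull: the depset $D$ of $h'(q)$ in the subset DSC structure is a minimal element of $\{D^{h(q)} \cap A : D^{h(q)} \in \dep(h(q))\}$, not literally a depset in $\dep$, so one has to resist the temptation to apply the DSC condition of $h$ directly to $D$. The trick above, namely lifting $D$ to a witness $D_0^{h(q)} \in \dep(h(q))$, applying the DSC condition of $h$ there, and then re-intersecting with $A$ using that $h$ lands in $A$, resolves this cleanly and completes the verification of the universal property.
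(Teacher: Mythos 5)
Your proposal is correct and follows essentially the same route as the paper's proof: both lift the minimal depset $D$ of the subset DSC structure to a witness $D_0^{h(q)} \in \dep(h(q))$, apply the morphism condition of $h$ to that witness, and then use the fact that $h$ lands in $\text{Eq}(f,g)$ (equivalently $h^{*}(\text{Eq}(f,g)) = Q$) to re-intersect with $A$ and conclude. The bookkeeping point you flag about not applying the DSC condition directly to $D$ is exactly the step the paper handles the same way.
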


\begin{proof}
Suppose there is a DSC $(Q, \dep^Q)$ and a morphism $h: (Q, \dep^Q) \to (E, \dep)$ such that $fh = gh$. We wish to show that $h$ factors uniquely through the inclusion morphism $$i: {(\text{Eq}(f,g), \widetilde{\dep}) \to (E, \dep)}.$$ We know that as set functions, $h$ factors uniquely through $i$, resulting in the following commutative diagram in $\cat{FinSet}$
\begin{equation*}
    \begin{tikzcd}
	Q \\
	{\text{Eq}(f,g)} & E & {E'}
	\arrow["h", from=1-1, to=2-2]
	\arrow["k"', dashed, from=1-1, to=2-1]
	\arrow["i"', from=2-1, to=2-2]
	\arrow["f", shift left=2, from=2-2, to=2-3]
	\arrow["g"', shift right=2, from=2-2, to=2-3]
\end{tikzcd}
\end{equation*}
We need only show that $k$ is a morphism of DSCs. Namely if $q \in Q$ we wish to show that for every $D^{k(e)} \in \widetilde{\dep}(k(e))$ there exists a $D^q \in \dep^Q(q)$ such that $D^q \subseteq k^{-1}(D^{k(e)} \cup k(e) )$. Since $h(Q) \subseteq \text{Eq}(f,g)$, $k$ is actually the corestriction of $h$, namely it is the same function with codomain restricted to $\text{Eq}(f,g)$. Thus $D^{k(e)} = D^{h(e)}$, and this is of the form $D^{h(e)}_0 \cap \text{Eq}(f,g)$ for some $D^{h(e)}_0 \in \dep(h(e))$ that makes $D^{h(e)}$ a minimal dependency set. Thus we need to show that there exists some $D^q$ such that
$$D^q \subseteq h^{-1} \left( (D^{h(e)}_0 \cap \text{Eq}(f,g)) \cup h(e) \right).$$
But the right hand side is equal to $h^{-1}(D^{h(e)}_0) \cap h^{-1}(\text{Eq}(f,g)) \cup h^{-1}(h(e))$. Since $h$ is equalized by $f$ and $g$, $h^{-1}(\text{Eq}(f,g)) = Q$. Thus we need only show that there exists a $D^q$ such that $D^q \subseteq h^{-1}(D^{h(e)}_0 \cup h(e))$. This is guaranteed since $h$ is a morphism of DSCs. Thus $k$ is a morphism of DSCs, and therefore an equalizer in $\cat{DSC}$.
\end{proof}

\begin{Cor} \label{cor dscs are finitely complete}
The category $\cat{DSC}$ is finitely complete.
\end{Cor}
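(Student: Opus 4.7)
The plan is to invoke the standard fact from category theory that a category is finitely complete if and only if it admits a terminal object, all binary products, and all equalizers (equivalently, all finite products and all equalizers, or a terminal object and all pullbacks). Since the preceding results in this section have already established each of these three ingredients for $\cat{DSC}$, the corollary will follow essentially by citation.

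Concretely, I would first recall Proposition \ref{prop dscs have initial and terminal objects}, which supplies the terminal object $F(*) = (*, \dep_*)$. Next, I would cite Proposition \ref{prop dscs have products}, which gives binary products via the construction $(E \times E', \dep_\times)$; by an easy induction this yields all finite products (with the terminal object handling the empty case). Finally, Proposition \ref{prop dscs have equalizers} provides equalizers of parallel pairs via the subset DSC structure on $\mathrm{Eq}(f,g)$.

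With these three ingredients in hand, the classical construction of a finite limit as an equalizer of a pair of maps between finite products applies verbatim in $\cat{DSC}$: given a finite diagram $D : \cat{J} \to \cat{DSC}$, form the product $\prod_{j \in \cat{J}_0} D(j)$, then take the equalizer of the two evident maps into $\prod_{\alpha : j \to k} D(k)$ induced by $D(\alpha) \circ \pi_j$ and $\pi_k$. I do not anticipate any real obstacle here, since each piece has already been verified; the only thing worth noting explicitly is that the proof does not require us to re-examine the underlying DSC structures, because the forgetful functor $U : \cat{DSC} \to \cat{FinSet}$ creates the relevant limits in the sense that each of the constructions in Propositions \ref{prop dscs have initial and terminal objects}, \ref{prop dscs have products}, and \ref{prop dscs have equalizers} sits on the expected set-level limit.
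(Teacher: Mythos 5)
Your proposal is correct and follows exactly the paper's own argument: cite the terminal object (Proposition \ref{prop dscs have initial and terminal objects}), binary products (Proposition \ref{prop dscs have products}), and equalizers (Proposition \ref{prop dscs have equalizers}), then invoke the standard construction of finite limits from these. The extra detail you give about forming the limit as an equalizer between products is just an unpacking of the same standard fact the paper cites.
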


\begin{proof}
The category $\cat{DSC}$ has a terminal object by Proposition \ref{prop dscs have initial and terminal objects}, binary products by Proposition \ref{prop dscs have products} and equalizers by Proposition \ref{prop dscs have equalizers}. Since every finite limit can be written using a terminal object, binary products and equalizers, $\cat{DSC}$ is finitely complete.
\end{proof}

The case for colimits of DSCs is more subtle than limits. The following example showcases this subtlety.

\begin{Ex}
Consider the DSC $a.b \join c$, and the two morphisms $a,c: * \to a.b \join c$. We wish to show that there does not exist a coequalizer of these two morphisms in the category of DSCs. Suppose there was, denote it by 
$$\begin{tikzcd}
	{*} & {a.b \vee c} & E
	\arrow["a", shift left=1, from=1-1, to=1-2]
	\arrow["c"', shift right=1, from=1-1, to=1-2]
	\arrow["f", from=1-2, to=1-3]
\end{tikzcd}$$
Now since $f$ is a coequalizer, we know that it must be an epimorphism. By Corollary \ref{cor epimorphism iff surjection}, this implies that $f$ is a surjection on the underlying sets. Since $f$ is a coequalizer, we know that its image $f(a.b \join c)$ must have cardinality $\leq 2$, since $f$ must identify $a$ and $c$. This implies that $|E| \leq 2$, where $|E|$ denotes the cardinality of $E$. Up to isomorphism, there are exactly three DSCs with cardinality $\leq 2$. They are the terminal DSC $*$, the discrete DSC on two elements $x, y$ and the DSC $u.v$. Now there exists no morphism of DSCs $a.b \join c \to x, y$, so it cannot be the coequalizer. There exists a morphism of DSCs $f: a.b \join c \to u.v$ with $f(a) = f(c) = u$ and $f(b) = v$, and since this morphism isn't constant, it doesn't factor through $*$, so $*$ cannot be the coequalizer. Thus $u.v$ is the only possible candidate for $E$. However, consider the morphism $h: a.b \join c \to r.s \join t$ with $h(a) = h(c) = r$ and $h(b) = s$. This is a morphism of DSCs, and if $u.v$ was the coequalizer, then $h$ would have to factor uniquely through $f$, namely there would have to exist a unique morphism of DSCs $k: u.v \to r. s \join t$ such that $kf = h$. This implies that $k(u) = r$ and $k(v) = s$. However this is not a morphism of DSCs. For instance $k^*(t \cup r) = u$, so there exists no depset of $u$ in $u.v$ that is a subset of $k^*(t \cup r)$. Thus $f$ does not factor uniquely through $h$, hence $u.v$ cannot be the coequalizer. This proves there cannot exist any coequalizer for the above diagram.   
\end{Ex}

\begin{Cor}
The category $\cat{DSC}$ does not have all coequalizers.
\end{Cor}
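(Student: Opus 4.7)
The plan is to exhibit a single parallel pair of morphisms in $\cat{DSC}$ whose coequalizer does not exist, which is exactly what the preceding example accomplishes. Concretely, I would take the two morphisms $a, c : * \to a.b \vee c$ picking out the elements $a$ and $c$ respectively (note that both are valid maps of DSCs since the terminal DSC $*$ has a single point whose unique depset is $\varnothing$, and trivially $\varnothing \subseteq f^*(D^{f(*)} \cup f(*))$ in either case), and argue by contradiction that no coequalizer $f : a.b \vee c \to E$ can exist.

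The first key step is to bound the cardinality of a hypothetical coequalizer $E$. Since every coequalizer is an epimorphism, Corollary \ref{cor epimorphism iff surjection} forces $U(f)$ to be surjective. Moreover $f$ must identify $a$ with $c$, so the image, and hence $E$, has at most two elements. The next step is to enumerate DSCs of cardinality $\leq 2$ up to isomorphism: the terminal DSC $*$, the discrete DSC on two elements $\{x,y\}$, and the DSC $u.v$. One then rules each out in turn: there is no DSC morphism $a.b \vee c \to \{x,y\}$ at all (since $a$ must map somewhere but the image would need to be complete and no depset of the image is satisfied); the map that collapses everything through $*$ cannot factor an evidently non-constant coequalizing map; and the only remaining candidate is $u.v$, with the inevitable assignment $a,c \mapsto u$, $b \mapsto v$.

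The main obstacle, and the heart of the argument, is to refute the remaining candidate $u.v$. I would do this by producing a DSC morphism $h : a.b \vee c \to r.s \vee t$ that coequalizes $a$ and $c$ (set $h(a) = h(c) = r$, $h(b) = s$) and then observing that the unique set-theoretic factorization $k : u.v \to r.s \vee t$ (forced by $k(u) = r$, $k(v) = s$) fails to be a DSC morphism: pulling back the $r$-minimal complete event set $\{r,t\}$ along $k$ gives $\{u\}$, which contains no depset of $u$ in $u.v$ (the unique depset of $u$ being $\{v\}$). Hence $h$ does not factor through $f$, contradicting the universal property of the coequalizer.

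Putting these steps together furnishes a parallel pair in $\cat{DSC}$ that admits no coequalizer, establishing the corollary. The only genuinely delicate point is the third step, which requires one to be careful about what maps of DSCs between small objects actually exist — the rest is a cardinality count plus an application of Corollary \ref{cor epimorphism iff surjection}.
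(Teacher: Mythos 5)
Your proposal follows the paper's own argument step for step: the same parallel pair $a,c : * \to a.b\join c$, the same cardinality bound via Corollary \ref{cor epimorphism iff surjection}, the same enumeration of the three DSCs of cardinality at most two, and the same refutation of $u.v$ via the test map to $r.s\join t$. However, one step---which you inherit from the paper but for which you supply your own justification---is false, and it is fatal to the argument. You claim there is no DSC morphism from $a.b\join c$ to the discrete two-element DSC $\{x,y\}$ ``since no depset of the image is satisfied.'' Consider $q$ with $q(a)=q(c)=x$ and $q(b)=y$. The only depset of $x$ is $\varnothing$, so the morphism condition at $a$ asks for some $D^a\in\dep(a)=\{\{b\},\{c\}\}$ with $D^a\subseteq q^{-1}(\{x\})=\{a,c\}$, and the depset $\{c\}$ works. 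Equivalently, by Lemma \ref{lem map iff preserves reachables} one only has to check that $q^{-1}(\{x\})=\{a,c\}$ and $q^{-1}(\{y\})=\{b\}$ are reachable in $a.b\join c$, and both $ac$ and $b$ appear as nodes in the Hasse diagram of Example \ref{ex DSC a depends on b or c}. So $q$ is a surjective DSC morphism coequalizing $a$ and $c$.

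Worse, this $q$ actually \emph{is} the coequalizer. By the adjunction of Lemma \ref{lem DSC is a concrete category} (or directly: every subset of a discrete DSC is complete, so preimages under any set function out of it are reachable), every set function out of a discrete DSC is a DSC morphism. Hence for any $h: a.b\join c \to Q$ with $h(a)=h(c)$, the unique set-level factorization $k$ through $q$ is automatically a morphism, and uniqueness follows from surjectivity of $q$. So this particular parallel pair does have a coequalizer, and neither your write-up nor the paper's example establishes the corollary as it stands; a different parallel pair (or a different line of attack) is needed. The remaining steps of your proposal---the enumeration of small DSCs and the refutation of $u.v$ via $k^{-1}(\{r,t\})=\{u\}$---are correct as far as they go, but they only show that $u.v$ is not the coequalizer, not that no coequalizer exists.
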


However by Proposition \ref{prop finpos and dsnc iso}, we know that the full subcategory of DSNCs is finitely complete and cocomplete, because the category of finite posets is. 

\section{The Bruns-Lakser Completion} \label{section bruns-lakser}

In this section we will consider a construction known in the order theory literature as the \textbf{Bruns-Lakser Completion} or \textbf{Bruns-Lakser Injective Envelope}. It was first defined in \cite{bruns1970injective}, and it defines an idempotent completion of meet-semilattices to distributive lattices in a way that preserves certain kinds of joins. Here we will describe this construction, apply it to the image of lattices under $\rdp$ and show how the composition of Bruns-Lakser after $\rdp$ provides an interesting interpretation of DSCs as a kind of data structure known in the computer science literature as a Merkle tree.

First we will recall the classical Bruns-Lakser completion. Later we will recast this construction as the Yoneda embedding of a meet-semilattice into its category of ``thin sheaves," as described in \cite{stubbe2005canonical}.

\begin{Def}
Let $L$ be a meet-semilattice. We say that a subset $S \subseteq L$ is a \textbf{distributive subset} if for every $x \in L$, the following joins exist and the following identity holds:
    $$ x \meet \bigvee S = \bigvee (x \meet S),$$
where $\bigvee (x \meet S)$ is the join of the subset $\{ x \meet s \, : \, s \in S \}$. We call elements of the form $\bigvee S$ for a distributive subset a \textbf{distributive join}. We say a morphism $f: L \to L'$ of meet-semilattices preserves distributive joins if whenever $S$ is a distributive subset of $L$, then $f_*(S)$ is a distributive subset of $L'$, and $f(\bigvee S) = \bigvee f_*(S)$. We call such a morphism $f$ a \textbf{distributive morphism} of meet-semilattices.
\end{Def}

\begin{Rem}
In some papers such as \cite{gerhke2013} and \cite{bruns1970injective} they refer to distributive subsets as admissible subsets. A distributive morphism of meet-semilattices is called a $\langle \meet, a\join \rangle$-morphism in \cite{gerhke2013}.
\end{Rem}

For our purposes, the condition requiring joins existing is vacuous, since $\rdp(E)$ is a lattice for any DSC $E$. However, not all subsets of $\rdp(E)$ will be distributive, and thus even though the Bruns-Lakser construction was originally defined for meet-semilattices, it will apply to our case as well.

\begin{Def}
A \textbf{distributive ideal} in a meet-semilattice $L$ is a subset $A \subseteq L$ such that:
\begin{enumerate}
    \item $A$ is a downset, (Definition \ref{def downset functor}) and
    \item if $I \subseteq A$ and $I$ is a distributive subset of $L$, then $\bigvee I \in A$.
\end{enumerate}
\end{Def}

Let $\BL(L)$ denote the subposet of the power set $P(L)$ consisting of distributive ideals ordered under set inclusion. We refer to this as the \textbf{Bruns-Lakser completion} of $L$.

\begin{Lemma} \label{lem prime downsets are distributive ideals}
If $a \in L$, then $(\downarrow a)$ is a distributive ideal.
\end{Lemma}

\begin{proof}
If $I \subseteq (\downarrow a)$ is any subset such that $\bigvee I$ exists, since $\bigvee I \leq a$ by definition, then $\bigvee I \in (\downarrow a)$.
\end{proof}

Let $b_L : L \to \cat{BL}(L)$ denote the injective order-preserving function $a \mapsto (\downarrow a)$. 

\begin{Lemma}[{\cite[Lemma 3]{bruns1970injective}}]
For a meet-semilattice $L$, $\cat{BL}(L)$ is a complete lattice, namely it has infinite joins and meets. Meets correspond to set intersections, while joins are given by:
$$ \bigvee_{i \in I} A_i = \left \{ \bigvee X \, : \, X \subseteq \bigcup_{i \in I} A_i, \; X \text{ is distributive in } L \right \},$$
where $\{ A_i \}_{i \in I}$ is a collection of elements of $\cat{BL}(L)$. Furthermore the function $b_L : L \to \cat{BL}(L)$ preserves meets and distributive joins.
\end{Lemma}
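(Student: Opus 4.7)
The plan is to verify four things in order: arbitrary intersections of distributive ideals are themselves distributive ideals (so meets in $\cat{BL}(L)$ exist and are given by set intersection); the displayed formula defines a distributive ideal that is the least upper bound of $\{A_i\}$; $\mathbf{b}$ preserves meets; and $\mathbf{b}$ preserves distributive joins.

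For meets, set $A = \bigcap_i A_i$. Downward-closure of $A$ is immediate from downward-closure of each $A_i$. If $X \subseteq A$ is distributive in $L$, then $X \subseteq A_i$ for every $i$, so $\bigvee X \in A_i$ for every $i$ and thus $\bigvee X \in A$. Hence $A \in \cat{BL}(L)$, and it is plainly the greatest lower bound in $\cat{BL}(L)$.

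For joins, denote the right-hand side of the displayed formula by $B$. I would first show $B$ is a distributive ideal. Downward-closure uses the trick $y = y \meet \bigvee X = \bigvee \{y \meet x : x \in X\}$ when $y \leq \bigvee X$ and $X$ is distributive: the set $\{y \meet x : x \in X\}$ sits inside $\bigcup_i A_i$ by downward-closure of each $A_i$, and a short calculation shows it is distributive in $L$. For closure under distributive joins, given a distributive $Y \subseteq B$, I would write each $y \in Y$ as $\bigvee X_y$ with $X_y \subseteq \bigcup_i A_i$ distributive, and check that $\bigcup_{y \in Y} X_y$ is itself distributive in $L$ with join equal to $\bigvee Y$; this reduces, via distributivity of $Y$ and of each $X_y$, to an identity of the form $z \meet \bigvee \bigcup_y X_y = \bigvee (z \meet \bigcup_y X_y)$. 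Then $A_i \subseteq B$ for every $i$ because each $a \in A_i$ is the join of the trivially distributive singleton $\{a\}$. Finally, any distributive ideal $D$ containing every $A_i$ contains every distributive $X \subseteq \bigcup_i A_i$ and hence $\bigvee X$, so $B \subseteq D$, which makes $B$ the least upper bound.

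Preservation of meets is then immediate: $\mathbf{b}(\bigwedge_j a_j) = (\downarrow \bigwedge_j a_j) = \bigcap_j (\downarrow a_j) = \bigwedge_j \mathbf{b}(a_j)$, using the meet formula just proved. For preservation of a distributive join $\bigvee S$ in $L$, the inclusion $\bigvee_{s \in S} \mathbf{b}(s) \subseteq \mathbf{b}(\bigvee S)$ follows directly from the join formula, since any $\bigvee X$ with $X \subseteq \bigcup_s (\downarrow s)$ satisfies $\bigvee X \leq \bigvee S$; the reverse inclusion uses once more the identity $y = \bigvee (y \meet S)$ for $y \leq \bigvee S$, together with the observation that $y \meet S$ is distributive in $L$ and contained in $\bigcup_s (\downarrow s)$. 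It remains to verify that $\mathbf{b}_*(S)$ is distributive as a subset of $\cat{BL}(L)$, i.e., $A \cap \bigvee_{s \in S} \mathbf{b}(s) = \bigvee_{s \in S} (A \cap \mathbf{b}(s))$ for all $A \in \cat{BL}(L)$; this reduces, via the explicit join formula, to a computation inside $L$ that uses distributivity of $S$. The main obstacle is the closure of $B$ under distributive joins, where one must manipulate a union of distributive subsets and verify that the union remains distributive; once that stability is established, the remaining claims follow by routine unwinding of definitions.
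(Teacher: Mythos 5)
Your proof is correct, and in fact the paper offers no proof of this lemma at all: it is quoted verbatim from Bruns--Lakser and cited as such, so there is nothing to compare against except the original source, whose argument yours essentially reproduces. All the genuinely delicate points are handled or correctly flagged: the downward closure of the join via $y=\bigvee\{y\meet x : x\in X\}$ with the check that $\{y\meet x\}$ is again distributive, the stability of distributivity under the union $\bigcup_y X_y$ (which rests on the standard associativity of iterated suprema), and the verification that $\mathbf{b}_*(S)$ is a distributive subset of $\cat{BL}(L)$, which does reduce to the computation inside $L$ you describe.
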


\begin{Lemma}[{\cite[Corollary 1]{bruns1970injective}}] \label{lem BL is distributive lattice}
For any meet-semilattice $L$, $\cat{BL}(L)$ is a complete infinitely distributive lattice. Furthermore, if $L$ is finite, then $\cat{BL}(L)$ is a finite distributive lattice.
\end{Lemma}

\begin{Def}\label{def distributive morphism}
Let $E,E'$ be DSCs. A \textbf{distributive morphism} of DSCs is a set function $f: E \to E'$ such that $\rdp(f): \rdp(E') \to \rdp(E)$ is a distributive morphism of meet-semilattices. Let $\cat{DSC}_{\text{dis}}$ denote the category of DSCs and distributive morphisms.\footnote{See \cite[Section 3]{gerhke2013} for more information on distributive subsets and morphisms. It is not hard to see meet semilattices with distributive morphisms form a category, which implies that $\cat{DSC}_{\text{dis}}$ also forms a category.}
\end{Def}

\begin{Lemma}
If $f: E \to E'$ is a bimorphism of DSCs and is injective, then $f$ is a distributive morphism of DSCs.
\end{Lemma}

\begin{proof}
If $f$ is injective, then $f^* f_*(X) = X$ for all $X \subseteq E$. By Lemma \ref{lem co-morphism iff preserves reachables}, if $X \subseteq E$ is reachable, then $f_*(X)$ is reachable, and $f^* f_*(X) = X$, thus $f^*$ is surjective. Since $f$ is a bimorphism, by Lemma \ref{cor bimorphisms give maps of lattices under rdp} $f^*$ also preserves meets and joins. Thus by \cite[Proposition 3.12]{gerhke2013}, $f^*$ preserves distributive subsets.
\end{proof}

The Bruns-Lakser completion extends to a functor $\cat{BL}: \cat{FinMSLatt}^\op_{\text{dis}} \to \cat{FinDLatt}$, where $\cat{FinMSLatt}_{\text{dis}}$ is the category of meet-semilattices and distributive morphisms. Indeed, if $f: L \to L'$ is a distributive morphism of meet-semilattices, then let $\BL(f) : \BL(L') \to \BL(L)$ denote the order-preserving function given by $X \mapsto f^*(X)$. Since $f$ preserves meets and joins, $\BL(f)$ is a morphism of finite distributive lattices. For our purposes we will restrict $\cat{BL}$ to the full subcategory $\cat{FinLatt}_{\text{dis}}$ of $\cat{FinMSLatt}_{\text{dis}}$ whose objects are finite lattices.

We wish to view DSCs as some kind of generalized space, in the hope of being able to bring to bear modern mathematical tools to study package management systems. The particular generalization of space we will use is known as a \textbf{frame} or \textbf{locale}\footnote{Typically the category of locales is taken to be the opposite of the category of frames.}. They are a generalization of the structure of the lattice of open subsets of a topological space. One can also think of the theory of locales as a sort of decategorified topos theory, see \cite{johnstone1982stone}. This is the viewpoint we will take here.

\begin{Def}
A \textbf{frame} $L$ is a poset with arbitrary joins, finite meets and satisfying an infinitary version of the distributive law, namely
\begin{equation*}
    x \meet \bigvee_{i \in I} y_i = \bigwedge_{i \in I} x \vee y_i,
\end{equation*}
for any set $I$. A morphism $f: L \to L'$ is a function that preserves finite meets and arbitrary joins.
\end{Def}

\begin{Rem}
Note that a finite poset is a frame if and only if it is a distributive lattice. Therefore, we will continue to use the term finite distributive lattice in what follows, though we will be working in the context of what would usually be called frame or locale theory.
\end{Rem}

\begin{Def}
Let $L$ be a poset. A \textbf{thin presheaf} on $L$ is a functor (equivalently an order-preserving function) $\varphi: L^{op} \to 2$, where $2$ denotes the poset $2 \coloneqq \{ 0 \leq 1 \}$. A morphism of thin presheaves is a natural transformation of such functors. Let $\cat{Pre}^\thin(L)$ denote the category of thin presheaves on $L$.
\end{Def}

Note that a natural transformation $h: \varphi \Rightarrow \psi$ of thin presheaves is unique if it exists. In other words, $\cat{Pre}^{\thin}(L)$ is a poset. Furthermore there is an isomorphism $F: \cat{Pre}^{\thin}(L) \to \mathcal{O}(L)$ of posets. Indeed, given an order preserving morphism $\varphi: L^{op} \to 2$, then let $F(\varphi) = \varphi^{-1}(1)$, which is a downset. Conversely, given a downset $X \subseteq L$, one can define a thin presheaf $F^{-1}(X)$ to be $1$ on $X$ and $0$ elsewhere. This isomorphism of posets extends to a natural isomorphism of functors
\begin{equation} \label{eq iso between presheaves and downsets}
\begin{tikzcd}
	{\cat{FinPos}^{\op}} & {\cat{FinDLatt}}
	\arrow[""{name=0, anchor=center, inner sep=0}, "{\cat{Pre}^{\thin}}", curve={height=-12pt}, from=1-1, to=1-2]
	\arrow[""{name=1, anchor=center, inner sep=0}, "{\mathcal{O}}"', curve={height=12pt}, from=1-1, to=1-2]
	\arrow["F \Downarrow"{description}, draw=none, from=0, to=1]
\end{tikzcd}
\end{equation}

 Given a finite poset $L$, there is a canonical monomorphism $y_L: L \hookrightarrow \cat{Pre}^\thin(L)$ given by the decategorified Yoneda embedding, sending $x \in L$ to the morphism $y_L(x): L^{op} \to 2$ defined by 
$$y_L(x)(u) = \begin{cases} 1 & \text{if } u \leq x, \\
0 & \text{else}.
\end{cases}$$
For every finite poset $L$, we obtain a commutative diagram
\begin{equation} \label{eq downsets and representables diagram}
    \begin{tikzcd}
	& L \\
	{\cat{Pre}^{\thin}(L)} && {\mathcal{O}(L)}
	\arrow["{F}", from=2-1, to=2-3]
	\arrow["{y_L}"', hook', from=1-2, to=2-1]
	\arrow["{\downarrow_L}", hook, from=1-2, to=2-3]
\end{tikzcd}
\end{equation}

\begin{Def}
Let $L$ be a meet-semilattice, and $x \in L$. We say that a subset $R \subseteq L$ is a family over $x$ if whenever $r \in R$, then $r \leq x$. A \textbf{Grothendieck pretopology} on a meet-semilattice $L$ is a function $J$ that assigns to every point $x \in L$ a set of families $J(x)$ satisfying the following conditions:
\begin{enumerate}
    \item The singleton $x \in J(x)$,
    \item if $R \in J(x)$ and $y \leq x$, then $R \meet y \in J(y)$, and
    \item if $R \in J(x)$, and for every $r_i \in R$, there exists a family $S_i \in J(r_i)$, then $\bigcup_i S_i \in J(x)$.
\end{enumerate}
We call the families $R \in J(x)$ that belong to a Grothendieck pretopology \textbf{covering families}. We call a meet-semilattice equipped with a Grothendieck pretopology a \textbf{posite}. A morphism of posites $f: L \to L'$ is an order preserving morphism that preserves meets and covering families, in the sense that if $R \in J(x)$, then $f_*(R) \in J(f(x))$.
\end{Def}

Let $L$ be a meet-semilattice, then define $J_\text{dis}$ to be the function that assigns to every $x \in L$ the set of distributive subsets $R \subseteq L$ such that $\bigvee R = x$. This defines a Grothendieck pretopology on $L$. In fact $J_\text{dis}$ generates what is known as the canonical Grothendieck topology on $L$, see \cite[Theorem 1]{stubbe2005canonical}. Note that a morphism $f: L \to L'$ of meet-semilattices is a morphism of posites (where we equip $L$ and $L'$ with $J_{\text{dis}}$) if and only if it is a distributive morphism of meet-semilattices.

\begin{Def}
Given a posite $(L, J)$, $x \in L$ and a thin presheaf $\varphi: L^{op} \to 2$, we say that $R \in J(x)$ is a \textbf{matching family} for $x$ if $\varphi(r) = 1$ for all $r \in R$. 

We say that $\varphi$ is a \textbf{thin sheaf} if for every $x \in L$, and every matching family $R$ over $x$, $\varphi(x) = 1$. Let $\cat{Sh}^\thin(L)$ denote the full subcategory of $\cat{Pre}^{\thin}(L)$ on the thin sheaves over $L$.
\end{Def}

By the natural isomorphism (\ref{eq iso between presheaves and downsets}), a thin presheaf on a finite lattice $L$ is equivalently a downset of $L$. Thus a thin sheaf is a downset of $L$ that is furthermore closed under joins of distributive subsets. In other words, a thin presheaf is a thin sheaf if and only if its corresponding downset is a distributive ideal. This observation leads to the following result.

\begin{Th}[{\cite{stubbe2005canonical}}] \label{th bruns lakser iso to thin sheaves}
Let $L$ be a meet-semilattice, considered as a posite $(L, J_{\text{dis}})$, then there is an isomorphism of distributive lattices
$$\mathsf{BL}(L) \cong \cat{Sh}^{\thin}(L),$$
where $L$ is considered as a posite $(L, J_{\text{dis}})$.
\end{Th}

\begin{Cor} \label{cor distributive pretopology is subcanonical}
Given a meet-semilattice $L$, the Grothendieck pretopology $J_\text{dis}$ on $L$ is subcanonical, namely every representable thin presheaf on $(L, J_{\text{dis}})$ is a thin sheaf.
\end{Cor}

\begin{proof}
This follows by combining Theorem \ref{th bruns lakser iso to thin sheaves}, the commutative diagram (\ref{eq downsets and representables diagram}) and Lemma \ref{lem prime downsets are distributive ideals}.
\end{proof}

From Corollary \ref{cor distributive pretopology is subcanonical}, we see that the Yoneda embedding factors through the category of thin sheaves. Thus for every meet-semilattice $L$, we obtain the following commutative diagram
\begin{equation}
    \begin{tikzcd}
	& L \\
	{\cat{Sh}^{\thin}(L)} && {\cat{BL}(L)} \\
	{\cat{Pre}^{\thin}(L)} && {\mathcal{O}(L)}
	\arrow["\cong", from=3-1, to=3-3]
	\arrow["{y_L}"', hook', from=1-2, to=2-1]
	\arrow["{b_L}", hook, from=1-2, to=2-3]
	\arrow[hook', from=2-1, to=3-1]
	\arrow[hook, from=2-3, to=3-3]
	\arrow["\cong", from=2-1, to=2-3]
\end{tikzcd}
\end{equation}

Let us delve more deeply into the Bruns-Lakser completion when restricted to finite lattices. Recall that if $L$ is a poset, then $\mathcal{J}(L)$ denotes the subposet of join-irreducible (Definition \ref{def join irreducible}) elements of $L$, ordered by subset inclusion. Thus $\mathcal{O J}(L)$ consists of the downsets (Definition \ref{def downset functor}) of $L$ which are made up only of join-irreducible elements. Given a poset $L$ let $\widehat{(-)}_L : L \to \mathcal{O} \mathcal{J}(L)$ denote the morphism of posets that sends $x \in L$ to the set $\widehat{x} = \{ y \leq x : y \in \mathcal{J}(L) \}$. If $L$ is a meet-semilattice, then $\widehat{(-)}$ preserves meets. This morphism is well known to order theorists thanks to the following celebrated theorem of Birkhoff.

\begin{Th}[{\cite{birkhoff1937rings}}] \label{th birkhoffs theorem}
If $L$ is a finite distributive lattice, then the map $\widehat{(-)}_L : L \to \mathcal{O J}(L)$ is an isomorphism.
\end{Th} 

We will show that the Bruns-Lakser completion applied to a finite lattice $L$ coincides exactly with $\mathcal{O J}(L)$. To do so, we need some preliminary results.

\begin{Lemma}[{\cite[Lemma 3.4]{gerhke2013}}] \label{lem technical lemma on dist subsets}
Given a finite lattice $L$, then $X \subseteq L$ is a distributive subset if and only if for every $y \in \mathcal{J}(L)$, if $y \leq \bigvee X$, then $y \leq x$ for some $x \in X$. 
\end{Lemma}

\begin{Rem}
Lemma \ref{lem technical lemma on dist subsets} is equivalent to \cite[Lemma 3.4]{gerhke2013} in the case that $L$ is finite by noting that finite lattices are complete, and therefore we can take $(e, L^\delta) = (1_L, L)$, the canonical extension of $L$, to be the identity. Similarly an element of a finite lattice is completely join irreducible ($x = \bigvee A$ implies $x \in A$) if and only if it is join irreducible, so $J^\infty(L) = \mathcal{J}(L)$. 
\end{Rem}

\begin{Cor} \label{cor equiv def of distr property}
Given a finite lattice $L$, a subset $A \subseteq L$ is distributive if and only if
\begin{equation}
    \widehat{\bigvee A} = \bigcup_{a \in A} \widehat{a}.
\end{equation}
\end{Cor}

\begin{Def}
Let $L$ be a poset with $x, y \in L$. We say that $x$ is a \textbf{predecessor} of $y$ if $x < y$. Let $(\Downarrow x)$ denote the subposet of predecessors of $x$.
\end{Def}

\begin{Lemma} \label{lem Ray's lemma}
Let $L$ be a finite lattice and $x \in L$, then $x = \bigvee \widehat{x}$.
\end{Lemma}

\begin{proof}
We will prove this by induction on the cardinality of $(\Downarrow x)$. For the base case, if $|\Downarrow x| = 0$, then $x = \bot$. In this case, $\widehat{\bot} = \varnothing$, and $\bigvee \varnothing = \bot$.

Now suppose that for every $y \in L$ with $|\Downarrow y| \leq n$ it follows that $y = \bigvee \widehat{y}$. Let $x \in L$ with $| \Downarrow x| \leq n + 1$. If $x$ is join-irreducible, then $x \in \widehat{x}$, and thus $x = \bigvee \widehat{x}$. Suppose that $x$ is not join-irreducible. Then there exists a subset $S \subseteq L$ with $x \notin S$ such that $x = \bigvee S$. Therefore for every $s \in S$, $s < x$. Thus $|\Downarrow s| < |\Downarrow x| \leq n + 1$. Thus by the induction hypothesis, $s = \bigvee \widehat{s}$. Therefore
\begin{equation}
    x = \bigvee S = \bigvee \left \{ \bigvee \widehat{s} : s \in S \right \} = \bigvee \bigcup_{s \in S} \widehat{s}.
\end{equation}
Now suppose that $y \in \mathcal{J}(L)$ and $y \leq s$ for some $s \in S$. Then since $s \leq x$, it follows that $y \leq x$, so $y \in \widehat{x}$. Thus $\bigcup_{s \in S} \widehat{s} \subseteq \widehat{x}$, which implies that
\begin{equation}
    x = \bigvee \bigcup_{s \in S} \widehat{s} \leq \bigvee \widehat{x}.
\end{equation}
Since $\bigvee \widehat{x} \leq x$, we have shown that $x = \bigvee \widehat{x}$. This finishes the induction step.
\end{proof}

\begin{Lemma}[{\cite[Lemma 3.4]{gerhke2013}}] \label{lem Gerhke Van Gool hard lemma}
Given a finite lattice $L$, with $z \in L$ and $A \subseteq L$ a distributive ideal, then $z \in A$ if and only if $\widehat{z} \subseteq \bigcup_{a \in A} \widehat{a}$.
\end{Lemma}

\begin{proof}
If $z \in A$, then clearly $\widehat{z} \subseteq \bigcup_{a \in A} \widehat{a}$. Thus suppose the converse is true. Then since $\bigvee z \meet A \leq z$ it follows that
\begin{equation}
    \widehat{z} = \widehat{z} \cap \bigcup_{a \in A} \widehat{a} = \bigcup_{a \in A} (\widehat{z} \cap \widehat{a}) = \bigcup_{a \in A} \widehat{z \meet a} = \bigcup_{x \in (z \meet A)} \widehat{x} \subseteq \widehat{\left( \bigvee z \meet A \right)} \subseteq \widehat{z}.
\end{equation}
Therefore the two inclusions above are actually equalities. Thus if we let $M = z \meet A$, we have $\bigcup_{x \in M} \widehat{x} = \widehat{ \bigvee M}$. By Corollary \ref{cor equiv def of distr property} this implies that $M =z \meet A$ is a distributive subset of $L$. But $M \subseteq A$, and $A$ is a distributive ideal, so $\bigvee M \in A$. By Lemma \ref{lem Ray's lemma}, we have
\begin{equation}
    z = \bigvee \widehat{z} = \bigvee \widehat{ \left( \bigvee M \right) } = \bigvee M.
\end{equation}
Therefore $z \in A$.
\end{proof}
 
\begin{Prop}[{\cite[Proposition 3.6]{gerhke2013}}]\label{prop bruns lakser as downsets of join irreducibles}
Let $L$ be a finite lattice. Then a subset $X \subseteq L$ is a distributive ideal if and only if it is a downset of join-irreducible elements of $L$. In other words $\cat{BL}(L) = \mathcal{O J}(L)$.
\end{Prop}

\begin{proof}
Suppose that $X \in \mathcal{O J}(L)$. Then every $x \in X$ is join-irreducible, so $X \subseteq \bigcup_{x \in X} \widehat{x}$. But $\widehat{x} \subseteq X$ for every $x \in X$ since $X$ is a downset of join-irreducibles, thus $\bigcup_{x \in X} \widehat{x} = X$. We wish to show that $X$ is a distributive ideal. Suppose that $A \subseteq X$ is distributive. By Corollary \ref{cor equiv def of distr property}, this implies $\widehat{\bigvee A} = \bigcup_{a \in A} \widehat{a}$. But $\bigcup_{a \in A} \widehat{a} \subseteq \bigcup_{x \in X} \widehat{x} = X$. Thus $\widehat{\bigvee A} \subseteq \bigcup_{x \in X} \widehat{x}$. By Lemma \ref{lem Gerhke Van Gool hard lemma} this implies that $\bigvee A \in X$, so $X$ is a distributive ideal. Thus $\mathcal{O J}(L) \subseteq \cat{BL}(L)$.

Now suppose that $X \in \cat{BL}(L)$. Then $\widetilde{X} = \bigcup_{x \in X} \widehat{x}$ is a downset of join-irreducibles, and by the above argument $\tilde{X}$ is a distributive ideal. But note that 
\begin{equation}
    \bigcup_{y \in \widetilde{X}} \widehat{y} = \bigcup_{x \in X} \widehat{x} = \widetilde{X}.
\end{equation}
Therefore for every $y \in \widetilde{X}$, $\widehat{y} \subseteq \bigcup_{x \in X} \widehat{x}$, so by Lemma \ref{lem Gerhke Van Gool hard lemma}, $y \in X$. Thus $\widetilde{X} \subseteq X$. Clearly $X \subseteq \widetilde{X}$, and therefore $X = \widetilde{X}$, so $X$ is a downset of join-irreducibles. So $\cat{BL}(L) = \mathcal{O J}(L)$.
\end{proof}

\begin{Cor}
If $(E, \dep)$ is a DSNC, then $\BL(\rdp(E)) \cong \rdp(E)$.
\end{Cor}

\begin{proof}
If $(E, \dep)$ is a DSNC, then by Corollary \ref{cor dsncs give dist lattices}, $\rdp(E)$ is a finite distributive lattice. Thus by Theorem \ref{th birkhoffs theorem}, $b_{\rdp(E)} : \rdp(E) \to \BL(\rdp(E))$ is an isomorphism.
\end{proof}

With Proposition \ref{prop bruns lakser as downsets of join irreducibles}, we obtain the following characterization of $\cat{BL}(\rdp(E))$ for a DSC $(E, \dep)$. By 
Proposition \ref{prop uniqueness of join irreducible rep}, a complete subset $X \subseteq E$ is join-irreducible in $\rdp(E)$ if and only if it is of the form $D^e \cup e$ for a unique package $e$ and depset $D^e$. This leads directly to the following result.

\begin{Lemma} \label{lem interpretation of J(rdp(E))}
Given a DSC $(E, \dep)$, the poset $\mathcal{J}(\rdp(E))$ is isomorphic to the poset whose elements are pairs $(e, D^e)$ of packages equipped with a choice of depset for that package, where $(e, D^e) \subseteq (e', D^{e'})$ if $D^e \cup e \subseteq D^{e'} \cup e'$. 
\end{Lemma}

By Lemma \ref{lem interpretation of J(rdp(E))} and Proposition \ref{prop bruns lakser as downsets of join irreducibles}, we can therefore interpret an element $X$ of $\mathcal{O J}(\rdp(E))$ to be a set of pairs $(e, D^e)$, where if $(e, D^e) \in X$ and $e' \in D^e$, then there exists some depset $D^{e'}$ such that $(e', D^{e'}) \in X$. We call elements of $\cat{BL}(\rdp(E))$ \textbf{installation sets}. This construction can be made functorial by considering the composite functor
\begin{equation}
    \cat{DSC}_{\text{dis}} \xrightarrow{\rdp} \cat{FinLatt}_{\text{dis}}^\op \xrightarrow{\BL} \cat{FinDLatt}.
\end{equation}

\begin{Ex}
Let us consider what $\cat{BL}(L)$ looks like when $L = \rdp(a.b\join c)$ from Example \ref{ex DSC a depends on b or c}. The join irreducibles of $\rdp(a.b \join c)$ are $\mathcal{J}(L) = \{ b,c,ab,ac \}$. Thus $\mathcal{J}(L)$ looks like
\[\begin{tikzcd}
	ab && ac \\
	b && c
	\arrow[no head, from=1-1, to=2-1]
	\arrow[no head, from=1-3, to=2-3]
\end{tikzcd}\]
Now if we represent the above elements via the isomorphism of Lemma \ref{lem interpretation of J(rdp(E))}, but using the following shorthand
\begin{equation}
b_\varnothing = (b, \varnothing), \qquad c_\varnothing = (c, \varnothing), \qquad (a, \{b \}) = a_b, \qquad (a, \{c \}) = a_c,
\end{equation}
then $\BL(L)$ looks like
\begin{equation}
    \begin{tikzcd}
	& {\{b_\varnothing, c_\varnothing, a_b, a_c\}} \\
	{\{b_\varnothing, c_{\varnothing}, a_b\}} && {\{b_\varnothing, c_\varnothing, a_c\}} \\
	{\{b_\varnothing, a_b\}} & {\{b_\varnothing, c_\varnothing\}} & {\{c_\varnothing, a_c\}} \\
	{\{b_\varnothing\}} && {\{c_\varnothing\}} \\
	& \varnothing
	\arrow[no head, from=2-1, to=1-2]
	\arrow[no head, from=2-3, to=1-2]
	\arrow[no head, from=3-1, to=2-1]
	\arrow[no head, from=3-3, to=2-3]
	\arrow[no head, from=4-1, to=3-1]
	\arrow[no head, from=4-3, to=3-3]
	\arrow[no head, from=3-2, to=2-3]
	\arrow[no head, from=3-2, to=2-1]
	\arrow[no head, from=4-1, to=3-2]
	\arrow[no head, from=4-3, to=3-2]
	\arrow[no head, from=4-1, to=5-2]
	\arrow[no head, from=5-2, to=4-3]
\end{tikzcd}
\end{equation}
The embedding $b: L \to \BL(L)$ is therefore given by
\begin{equation}
\varnothing \mapsto \varnothing, b \mapsto \{b_\varnothing\}, c \mapsto \{c_\varnothing\}, ab \mapsto \{b_\varnothing, a_b \}, bc \mapsto \{b_\varnothing, c_\varnothing \}, ac \mapsto \{c_\varnothing, a_c \}, abc \mapsto \{ b_\varnothing, c_\varnothing, a_b, a_c \}.
\end{equation}
\end{Ex}

The distributive lattice $\BL(\rdp(E))$ has an interesting interpretation as a kind of Merkle tree. Merkle trees were first introduced as a mechanism for digital signatures \cite{merkle1987digital} but now are used widely from storage systems to source control to the hash structure of packages in the Nix package management system. The main idea of the last of these is that it stores ``software components in isolation from each other in a central component store, under path names that contain cryptographic hashes of all inputs involved in building the component'' \cite{dolstra2006purely}. This establishes, effectively, a tree, where each node can be thought of depending on all nodes below it, and is labeled by its own data combined with (the recursive hash of) the subtree of all nodes below it. The act of distinguishing multiple copies of ``the same" event by the record of the choices of their possible inputs corresponds to the construction of a component store in Nix.

Thus $\BL(\rdp(E))$ can be seen as ``splitting" the complete subsets of $E$ whose packages have more than 1 depset. In fact this can already be seen at the level of $\mathcal{J}(\rdp(E))$. By applying $R^{-1}$ from Proposition \ref{prop finpos and dsnc iso}, we obtain a DSNC $\tau(E) = R^{-1}(\mathcal{J}(\rdp(E)))$, which we can think of as a universal way of turning a DSC into a DSNC. Taking Bruns-Lakser of $\rdp(E)$ in turn extends this ``splitting" from packages to complete subsets.

\section{Version Parametrization} \label{section version parametrization}

One motivation for the study of package dependency systems has been to address package version solving and constraint resolution. In fact, different versions of the same package are the prototypical example of dependency sets. Given a desired set of packages to install, picking a uniform set of versions for all their dependencies to enable their simultaneous installation is a nontrivial problem -- in fact, it is NP-complete \cite{dicosmo}.

The difficulty of this problem is often not apparent to end-users of software package systems, because in many cases that work has already been done for them. The repository itself does not consist of all version of packages -- but a single fixed version of each package, such that all are known to be mutually consistent and compatible. The issue presents itself to maintainers of such distribution repositories, but also, more broadly to developers who make use of software library repositories, and contribute their own packages to them as well.

Semi-formal versioning policies have been introduced as a means to aid package maintainers in constructing and managing correct version-bounds. The most prominent and notable of these is Semantic Versioning, (or semver) \cite{semver}. Under semver, the first (major) component of a package version is taken to be for incompatible api changes, and the second (minor) for additions of functionality that do not break backwards compatibility. As semver is a cross-language specification, it does not provide detailed per-language rules on which changes specifically are taken to break backwards-compatibility. The Haskell language also produced its own roughly contemporaneous package versioning policy \cite{pvp}. Because this policy is specific to a single language, it does attempt to give specific rules on what constitute ``breaking" or``non-breaking" changes.

Here we put forward a possible mathematical formalization of update policies, by formalizing the semver notion of a minor or non-breaking update using the following notion of higher version relation on a DSC.

\begin{Def} \label{def higher version relation}
Let $(E, \dep)$ be a DSC. We introduce a relation on $E$, written $e \bhd e'$ if 
\begin{enumerate}
    \item for all $D^e \in \dep(e)$, there exists a depset $D^{e'} \subseteq D^e$, and
    \item for all $x \in E$, if $D^x$ is a depset such that $e \in D^x$, then $[(D^x \setminus e) \cup e']$ is also a depset of $x$. 
\end{enumerate}
We call $\bhd$ the \textbf{higher version relation}. We say that $e'$ is a higher version of $e$.
\end{Def}

\begin{Rem}
One might interpret the above definition as follows. Condition (1) says loosely that higher versions of packages cannot ``gain" new dependencies, and condition (2) says that if a package $x$ can depend on a package $e$ and $e \bhd e'$, then $x$ can depend on $e'$. 
\end{Rem}

\begin{Lemma}
The higher version relation $\bhd$ is reflexive and transitive.
\end{Lemma}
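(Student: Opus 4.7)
The plan is to unpack the two conditions defining $\bhd$ and check them separately in each case. For reflexivity, both conditions are immediate: $\dep(e) \subseteq \dep(e)$ always holds, and if $e \in D^x$ then $(D^x \setminus e) \cup \{e\} = D^x \in \dep(x)$.

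For transitivity, suppose $e \bhd e'$ and $e' \bhd e''$. The first condition transfers by the transitivity of set inclusion: $\dep(e) \subseteq \dep(e') \subseteq \dep(e'')$. The interesting content is the second condition: given $D^x \in \dep(x)$ with $e \in D^x$, I need to produce $(D^x \setminus e) \cup \{e''\} \in \dep(x)$. The natural approach is a two-step substitution: first use $e \bhd e'$ to get $D_1 = (D^x \setminus e) \cup \{e'\} \in \dep(x)$, then apply $e' \bhd e''$ to $D_1$ (which contains $e'$) to get $(D_1 \setminus e') \cup \{e''\} \in \dep(x)$, and then check that this equals $(D^x \setminus e) \cup \{e''\}$.

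The main obstacle is justifying this last equality, which only holds when $e' \notin D^x \setminus e$; otherwise $D_1 \setminus e'$ would strictly delete $e'$ from $D^x \setminus e$ as well. The key observation that resolves this is irredundancy (D0): if $e' \neq e$ and $e'$ were already in $D^x$, then $(D^x \setminus e) \cup \{e'\} = D^x \setminus e$ would be a proper subset of $D^x$ also belonging to $\dep(x)$, contradicting D0. So as soon as $e \neq e'$ we automatically get $e' \notin D^x$, and in particular $e' \notin D^x \setminus e$, making $D_1 \setminus e' = D^x \setminus e$ and completing the chain. The trivial case $e = e'$ (where the first substitution is the identity and the desired conclusion reduces to the hypothesis $e' \bhd e''$) is handled separately as a remark. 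Composition and reflexivity are thereby established, proving $\bhd$ is a preorder on $E$.
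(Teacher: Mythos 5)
Your proof is correct and follows the same two-step substitution route as the paper's own argument. You are in fact more careful than the paper: the paper simply asserts $(D^x_0 \setminus e') \cup \{e''\} = (D^x \setminus e) \cup \{e''\}$ without justifying that $e' \notin D^x \setminus e$, whereas your appeal to irredundancy (D0), together with the separate treatment of the degenerate case $e = e'$, closes that small gap.
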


\begin{proof}
It is clear that $\bhd$ is reflexive. Now suppose that $e \bhd e'$ and $e' \bhd e''$. If $e \in D^x$ for some $x \in E$ then $D^x_0 = [(D^x \setminus e) \cup  e'] \in \dep(x)$. But since $e' \bhd e''$, and $e' \in D^x_0$, then $[(D^x_0 \setminus e') \cup e''] = [(D^x \setminus e) \cup e''] \in \dep(x)$. Thus $e \bhd e''$.
\end{proof}

Now if $(E, \dep)$ is a DSC, then consider the following function, $V: P(E) \to P(E)$ defined by
\begin{equation}
 V(X) = \bigcup_{x \in X} \text{Vers}(x).   
\end{equation}

\begin{Lemma}
If $X \subseteq E$ is a complete subset, then $V(X)$ is a complete subset.
\end{Lemma}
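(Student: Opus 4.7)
My plan is to reduce the statement to completeness via Proposition \ref{prop reachability equiv to complete}, and then exploit only the first clause of the definition of $\bhd$ (the inclusion $\dep(e) \subseteq \dep(e')$) together with reflexivity of $\bhd$. Clause (2) of the version relation is not needed for this lemma.

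First, I would note that $X \subseteq V(X)$: since $\bhd$ is reflexive, each $x \in X$ satisfies $x \in \text{Vers}(x)$, so $X \subseteq \bigcup_{x \in X} \text{Vers}(x) = V(X)$. Next, I would pick an arbitrary $y \in V(X)$ and find a depset of $y$ sitting inside $V(X)$. By definition of $V(X)$, there exists $x \in X$ with $y \in \text{Vers}(x)$, i.e.\ $x \bhd y$. Since $X$ is reachable, by Proposition \ref{prop reachability equiv to complete} it is complete, so there exists $D^x \in \dep(x)$ with $D^x \subseteq X$. Now clause (1) of the definition of $\bhd$ gives $\dep(x) \subseteq \dep(y)$, hence $D^x \in \dep(y)$. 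Combining with $X \subseteq V(X)$ from the previous observation, we get $D^x \subseteq V(X)$, so $D^x$ witnesses completeness at $y$.

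Since $y \in V(X)$ was arbitrary, $V(X)$ is complete, hence reachable by Proposition \ref{prop reachability equiv to complete}. There is no real obstacle here; the substantive content has already been absorbed into the equivalence between completeness and reachability (Proposition \ref{prop reachability equiv to complete}) and into clause (1) of the higher version relation, which was designed so that passing to a higher version preserves the ability to be enabled. The only small pitfall is to remember to invoke reflexivity of $\bhd$ to conclude $X \subseteq V(X)$, which is what guarantees that the depset $D^x$ produced by completeness of $X$ actually lands in $V(X)$ and not merely in $X$.
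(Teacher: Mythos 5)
Your proof is correct and follows essentially the same route as the paper's: reduce to completeness via Proposition \ref{prop reachability equiv to complete}, use clause (1) of the higher version relation to transport a depset $D^x \subseteq X$ of the lower version to the higher one, and conclude via $X \subseteq V(X)$. You make explicit the appeal to reflexivity of $\bhd$ behind the inclusion $X \subseteq V(X)$, which the paper uses silently; otherwise the arguments coincide.
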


\begin{proof}
Suppose that $e' \in V(X)$. Then there exists some $e \in X$ such that $e \bhd e'$. Since $X$ is complete, there exists a depset $D^e \subseteq X$. But $e \bhd e'$ implies that there exists a depset $D^{e'} \subseteq D^e$, and since $X \subseteq V(X)$, $D^{e'} \subseteq V(X)$.
\end{proof}

\begin{Def}
Let $P$ be a poset and $f: P \to P$ a function. We say that $f$ is a \textbf{closure operator} if:
\begin{enumerate}
    \item $f$ is order preserving,
    \item if $X \in P$, then $X \leq f(X)$, and
    \item if $X \in P$, then $f(f(X)) = f(X)$.
\end{enumerate}
Note that this is precisely the same thing as an \textbf{idempotent monad} on $P$ thought of as a category.
\end{Def}

\begin{Cor}
Given a DSC $(E, \dep)$, $V$ descends to a function $V: \rdp(E) \to \rdp(E)$, and is a closure operator.
\end{Cor}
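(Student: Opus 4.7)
The plan is to verify the three defining properties of a closure operator for $V$ restricted to $\rdp(E)$, using only the preceding lemma (which guarantees that $V$ lands in $\rdp(E)$) together with reflexivity and transitivity of $\bhd$ established earlier.

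First I would check monotonicity. If $X \subseteq Y$ are reachable event sets, then
$$V(X) = \bigcup_{x \in X} \text{Vers}(x) \subseteq \bigcup_{y \in Y} \text{Vers}(y) = V(Y),$$
so $V$ is order-preserving with respect to $\subseteq$, which is the order on $\rdp(E)$.

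Next I would establish extensivity. Since $\bhd$ is reflexive, $x \in \text{Vers}(x)$ for every $x \in E$, hence $X \subseteq \bigcup_{x \in X}\text{Vers}(x) = V(X)$.

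Finally I would show idempotence. The inclusion $V(X) \subseteq V(V(X))$ follows from extensivity applied to $V(X)$. For the reverse inclusion, take $z \in V(V(X))$. Then there is $y \in V(X)$ with $y \bhd z$, and in turn there is $x \in X$ with $x \bhd y$. By transitivity of $\bhd$, we have $x \bhd z$, so $z \in \text{Vers}(x) \subseteq V(X)$. Thus $V(V(X)) = V(X)$.

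None of these steps is an obstacle: the only non-formal ingredient is the already-proven fact that $V$ preserves reachability (so that $V$ really does restrict to $\rdp(E) \to \rdp(E)$), and transitivity of $\bhd$, which was proved in the lemma preceding the corollary. Combining the three properties gives the closure operator (equivalently, idempotent monad) structure on the poset $\rdp(E)$.
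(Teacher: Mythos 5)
Your proof is correct and follows exactly the intended route: the paper states this corollary without proof, as it is immediate from the preceding lemma (that $V$ preserves reachability) together with the reflexivity and transitivity of $\bhd$, which is precisely the three-step verification you give.
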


Now we wish to describe how $V$ descends to a closure operator on the Bruns-Lakser completion. If $V: L \to L$ is a closure operator on a finite lattice, we want to show that it induces a closure operator $\bold{V}: \cat{BL}(L) \to \cat{BL}(L)$. If $X \in \cat{BL}(L) = \mathcal{O J}(L)$, then by the proof of Proposition \ref{prop bruns lakser as downsets of join irreducibles}, $X = \bigcup_{x \in X} \widehat{x}$. Thus define
$$\bold{V}(X) = \bigcup_{x \in X} \widehat{V(x)}.$$

\begin{Prop} \label{prop closure operator descends to bruns-lakser}
The function $\bold{V}: \cat{BL}(L) \to \cat{BL}(L)$ as defined above is a closure operator.
\end{Prop}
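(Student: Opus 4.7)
The plan is to verify each of the three defining axioms of a closure operator on the poset $\cat{BL}(L)$: order preservation, extensivity ($S \subseteq \bold{V}(S)$), and idempotency ($\bold{V}(\bold{V}(S)) = \bold{V}(S)$). The key technical device throughout will be the identification $\cat{BL}(L) \cong \mathcal{O}\mathcal{J}(L)$ furnished by Proposition \ref{prop bruns lakser as downsets of join irreducibles}. Under the isomorphism $\phi$, the formula $\bold{V}(S) = \bigvee_{x \in S} \bold{b}(V(x))$ corresponds to the set-theoretic union $\bigcup_{x \in S} \widehat{V(x)}$ in $\mathcal{O}\mathcal{J}(L)$, and adapting the downset argument in the proof of that Proposition will show that an element $y \in L$ lies in the distributive ideal $\bold{V}(S)$ exactly when $\widehat{y} \subseteq \bigcup_{x \in S} \widehat{V(x)}$.

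Order preservation is immediate: if $S \subseteq S'$, then the join defining $\bold{V}(S)$ ranges over a subset of the indices for $\bold{V}(S')$, so $\bold{V}(S) \subseteq \bold{V}(S')$. For extensivity, I will appeal to Corollary \ref{cor decomp of down closed sets of irreducibles} to decompose $S = \bigcup_{x \in S} \bold{b}(x)$, and note that by extensivity of $V$ one has $x \leq V(x)$ for every $x \in S$, hence $\bold{b}(x) \subseteq \bold{b}(V(x))$; taking joins over $x \in S$ then yields $S \subseteq \bold{V}(S)$.

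For idempotency, the inclusion $\bold{V}(S) \subseteq \bold{V}(\bold{V}(S))$ follows directly from extensivity applied to $\bold{V}(S)$. The reverse inclusion will reduce to showing that for each $y \in \bold{V}(S)$ we have $V(y) \in \bold{V}(S)$. Using the characterization sketched above, $y \in \bold{V}(S)$ means every join-irreducible $z \leq y$ satisfies $z \leq V(x_z)$ for some $x_z \in S$; then monotonicity together with idempotency of $V$ gives $V(z) \leq V(V(x_z)) = V(x_z)$, so $V(z)$ itself lies in $\bold{V}(S)$. I would then leverage this fact, combined with the decomposition $V(y) = \bigvee \widehat{V(y)}$ available in the finite lattice $L$, to conclude that every join-irreducible below $V(y)$ is likewise bounded by some $V(x)$ with $x \in S$, thereby establishing $\widehat{V(y)} \subseteq \bigcup_{x \in S} \widehat{V(x)}$ and hence $V(y) \in \bold{V}(S)$.

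The main obstacle will be precisely this last step: propagating the property ``every join-irreducible below $y$ is below some $V(x_z)$'' to the analogous statement for join-irreducibles below $V(y)$. Carefully controlling this propagation requires a delicate interplay between the action of $V$ on $L$ and the combinatorial structure of $\mathcal{O}\mathcal{J}(L)$, and is the point at which the closure-operator axioms on $V$ are fully exploited.
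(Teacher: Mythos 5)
Your first two axioms (order preservation and extensivity) match the paper's argument exactly. The problem is idempotency, and it stems from how you characterize membership in $\mathbf{V}(S)$. The paper defines $\mathbf{V}(S) = \bigcup_{x \in S} \mathbf{b}(V(x))$ as a literal union of principal downsets of $L$, so $y \in \mathbf{V}(S)$ if and only if $y \leq V(x)$ for a \emph{single} $x \in S$. You instead pass through $\mathcal{O}\mathcal{J}(L)$ and take $y \in \mathbf{V}(S)$ to mean that every join-irreducible $z \leq y$ satisfies $z \leq V(x_z)$ for some $x_z \in S$ \emph{depending on} $z$. That is a strictly weaker condition (e.g.\ $y = z_1 \join z_2$ with $z_1 \leq V(x_1)$, $z_2 \leq V(x_2)$ but $y \not\leq V(x)$ for any single $x$), so you are proving idempotency for a different, generally larger operator than the one defined.

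This mischaracterization is what creates your ``main obstacle,'' and that obstacle is a genuine gap: you need to propagate ``every join-irreducible below $y$ is below some $V(x_z)$'' to the same statement for join-irreducibles below $V(y)$, but a closure operator does not commute with joins --- from $y = \bigvee \widehat{y}$ one only gets $V(y) \geq \bigvee_{z \in \widehat{y}} V(z)$, and $V(y)$ can have join-irreducibles below it that are not below any $V(x_z)$. You flag this step as delicate but do not supply an argument, so the proof is incomplete as written. With the paper's actual definition none of this machinery is needed: if $z \in \mathbf{V}(\mathbf{V}(S))$ then $z \leq V(y)$ for some $y \in \mathbf{V}(S)$, hence $y \leq V(x)$ for some $x \in S$, and monotonicity plus idempotency of $V$ give $V(y) \leq V(V(x)) = V(x)$, so $z \in \mathbf{b}(V(x)) \subseteq \mathbf{V}(S)$. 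I recommend discarding the join-irreducible characterization for this step and arguing directly from the union-of-principal-downsets description.
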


\begin{proof}
Suppose that $X \subseteq Y$. Then clearly $\bold{V}(X) \subseteq \bold{V}(Y)$. Thus $\bold{V}$ is order preserving.

We wish to show that $X \subseteq \bold{V}(X)$. Suppose that $x \in S$. Then $x \leq V(x)$, since $V$ is a closure operator on $L$. Thus $\widehat{x} \subseteq \widehat{V(x)}$. Therefore $X \subseteq \bigcup_{x \in X} \widehat{V(x)} = \bold{V}(X)$.

Now we wish to show that $\bold{V}(\bold{V}(X)) \subseteq \bold{V}(X)$. Suppose that $z \in \bold{V}(\bold{V}(X))$. Then $z \in \widehat{V(y)}$ for some $y \in \bold{V}(X)$. But then $y \in \widehat{V(x)}$ for some $x \in X$. This implies $y \leq V(x)$, so $V(y) \leq V(V(x)) \leq V(x)$, since $V$ is a closure operator. Thus $\widehat{V(y)} \subseteq \widehat{V(x)}$, and $z \in \widehat{V(x)} \subseteq \bold{V}(X)$, therefore $\bold{V}$ defines a closure operator.
\end{proof}

Thanks to Proposition \ref{prop closure operator descends to bruns-lakser}, the closure operator
$\bold{V}: \cat{BL}(\rdp(E)) \to \cat{BL}(\rdp(E))$ is now defined for any DSC $(E, \dep)$. We call this the \textbf{version monad} on $\cat{BL}(\rdp(E))$.

Frames correspond to complete Heyting algebras, which give in turn models of propositional intuitionistic logic \cite{vickers, maclane}. Furthermore, closure operators yield Kripke frames on these corresponding logics \cite{palmgren}. Consequently, we can associate to any DSC with a choice of versioning relation a corresponding constructive logic with a ``possibility"-like modal operator, and with terms in this logic corresponding to specifications of sets of packages. Our hope is that this ``internal logic of event dependencies" (or related logics) can be a setting which is suited to formal reasoning about and manipulation of propositions regarding event dependencies.

\section{Conclusion} \label{section conclusion}
From a mathematical standpoint, this paper has made the following contributions. By introducing DSCs and Theorems \ref{th equiv DSCs antimatroids} and \ref{th equiv DSC and DES}, we have provided a connection between the theory of general event structures and that of antimatroids, and shown that both can be used as models of package management systems. We have also proposed natural definitions of morphisms of antimatroids and dependency event structures, and explored properties of the resulting equivalent categories. Furthermore, we have given and shown applications of a simple characterization of the finite Bruns-Lakser completion.

From an applied standpoint, we have put this mathematics to work in providing a formal characterization of the dependency data of a package repository, as well as a formal characterization of ``minor" versions of packages, and of package versioning policies. One place we hope this is of use is the field of ``empirical software engineering" and study of ``repository mining". Work there has put significant effort into understanding the structure of software repositories, but at the cost of simplifying assumptions. For example, \cite{riivo} creates dependency graphs with different versions entirely independent, and with each package pinned to a specific single choice of version for each of its dependencies. The work in this paper provides a mathematical structure that can accurately represent both multiple versions of packages with choices of dependencies and also the versioning relation between them.

We have also given a mathematical account of the logical and order-theoretic interpretation of Merkle trees, which sheds light on the reasons for their widespread utility. It is worth noting that the construction and characterization of Bruns-Lakser given in Proposition \ref{prop bruns lakser as downsets of join irreducibles} in fact extends beyond finite lattices to all finite posets, as explored in \cite{bazerman2020topological}.

There remain some interesting outstanding questions regarding morphisms of DSCs. For instance, if $f: (E, \dep) \to (E', \dep')$ is a bimorphism of DSCs, then by Corollary \ref{cor bimorphisms give maps of lattices under rdp}, $f^*$ is a map of lattices. Can we characterize exactly the class of maps of DSCs such that $f^*$ is a map of lattices? Given the connection between DSCs and event structures, it is worth exploring what a "comorphism-first" view of the categorical structure of DSCs would look like. In other words, how do $\cat{DSC}$ and $\cat{DSC}_\co$ differ, and how does using a covariant version of $\rdp$, namely having $f_*: \rdp(E) \to \rdp(E')$ compare with its contravariant version? Are there certain applications to package management systems where comorphisms are more appropriate?

There are a number of possible directions for future work. One is further development of a category whose internal logic can encode package dependency problems. Another is modeling updates to package repositories by means of morphisms in the category of DSCs. Still another is extending DSCs with a notion of conflict, so that they would more fully correspond to general event structures. Beyond that is development of insights into efficient dependency solvers -- a topic of great practical interest. Finally, there is much we hope that can be extracted from the surprising connection between dependency systems and antimatroids. There are many high-powered tools available for the study of the latter, as well as their cousins, matroids, and the more general class of greedoids to which both belong, such as characteristic polynomials. It would be of interest to see what insights these tools can bring when transported to the world of dependency structures.

\printbibliography

\end{document}